\theoremstyle{definition}
\newtheorem{theorem}{Theorem}[section]
\newtheorem{prop}[theorem]{Proposition}
\newtheorem{lemma}[theorem]{Lemma}
\newtheorem{cor}[theorem]{Corollary}
\newtheorem{assm}[theorem]{Assumptions}
\newtheorem{question}[theorem]{Question}
\newtheorem{ex}[theorem]{Example}
\theoremstyle{remark}
\newtheorem{dfn}[theorem]{Definition}
\newtheorem{remark}[theorem]{Remark}
\newtheorem{claim}[theorem]{Claim}
\numberwithin{equation}{section}
\def\nsubset{\not\subset}
\def\co{\colon\thinspace}
\def\ep{\epsilon}
\def\R{\mathbb{R}}
\def\Z{\mathbb{Z}}
\def\N{\mathbb{N}}
\title{Local rigidity, contact homeomorphisms, and conformal factors}
\author{Michael Usher}
\address{Department of Mathematics\\University of Georgia\\Athens, GA 30602}
\email{usher@uga.edu}
\begin{document}
\begin{abstract}
We show that if the image of a Legendrian submanifold under a contact homeomorphism (\emph{i.e.} a homeomorphism that is a $C^0$-limit of contactomorphisms) is smooth then it is Legendrian, assuming only positive local lower bounds on the conformal factors of the approximating contactomorphisms.  More generally the analogous result holds for coisotropic submanifolds in the sense of \cite{Huang}.   This is a contact version of the Humili\`ere-Leclercq-Seyfaddini coisotropic rigidity theorem in $C^0$ symplectic geometry, and the proof adapts the author's recent re-proof of that result in \cite{U3} based on a notion of local rigidity of points on locally closed subsets.  We also provide two different flavors of examples showing that a contact homeomorphism can map a submanifold that is transverse to the contact structure to one that is smooth and tangent to the contact structure at a point.
\end{abstract}
\maketitle

\section{Introduction}
The Eliashberg-Gromov symplectic rigidity theorem, stating that the group of symplectic diffeomorphisms of a symplectic manifold is $C^0$-closed in the group of all diffeomorphisms, led to the notion of a symplectic homeomorphism as a homeomorphism which is a $C^0$-limit of symplectic diffeomorphisms, and to the field of ``$C^0$ symplectic topology,'' studying the properties of symplectic manifolds that are invariant under symplectic homeomorphisms.  Analogous ideas in the setting of contact manifolds have only fairly recently begun to be developed.  In particular \cite{MSrig}, following older ideas of Eliashberg, gave the first full proof in the literature of the contact version of the Eliashberg-Gromov theorem, and \cite{M19} gave an alternative proof based on a characterization of contact diffeomorphisms in terms of their effect on a version of Eliashberg's shape invariant. 

Throughout this paper, a \textbf{contact homeomorphism} of a contact manifold $(Y,\xi)$ is by definition a homeomorphism of $Y$ that arises as limit of some sequence of contact diffeomorphisms with respect to the $C^0$ (compact-open) topology (this differs from the usage in \cite{MSp}, in which the contact homeomorphism group is a certain subgroup of the group of topological automorphisms mentioned at the end of this paragraph, and thus is significantly smaller than what we define as the contact homeomorphism group).   If $\xi$ is cooriented, say with $\xi=\ker\alpha$ where $\alpha\in \Omega^1(Y)$, it is well-known  that questions about contact diffeomorphisms if $(Y,\xi)$ can be converted to questions about symplectic diffeomorphisms of the symplectization $(\R\times Y,d(e^r\alpha))$ (where $r$ is the coordinate on $\R$).  Specifically, a diffeomorphism $\psi\co Y\to Y$ obeys $\psi^*\alpha=f\alpha$ for a smooth function $f\co Y\to (0,\infty)$ if and only if the diffeomorphism $\Psi_f\co \R\times Y\to\R\times Y$ defined by $\Psi_f(r,y)=\left(r-\log f(y),\psi(y)\right)$ is a symplectomorphism.  An analogous device is \emph{not} in general available for contact homeomorphisms, essentially because of the dependence of $\Psi_f$ above on the conformal factor $f$, which in turn depends on the derivative of $\psi$.  It is quite possible for a sequence $\{\psi_m\}$ of contactomorphisms to $C^0$-converge to a homeomorphism $\psi$ while the logarithms of the conformal factors $f_m$ given by $\psi_{m}^{*}\alpha=f_m\alpha$ are unbounded (see Section \ref{accel} for one family of examples), in which case the $\Psi_{f_m}$  do not converge.  On the other hand, in \cite{MSp} the authors consider the more restricted class of ``topological automorphisms'' of a contact manifold, defined to be limits $C^0$-limits of sequences of contactomorphisms $\{\psi_m\}_{m=1}^{\infty}$ with the property that the corresponding conformal factors $f_m$ converge uniformly. 

The main question motivating this paper is the following:

\begin{question}\label{mainq}
Let $(Y,\xi)$ be a contact manifold and $\psi\co Y\to Y$ a contact homeomorphism. Suppose that $\Lambda\subset Y$ is a Legendrian submanifold such that $\psi(\Lambda)$ is a smooth submanifold.  Must $\psi(\Lambda)$ be Legendrian?
\end{question}

More generally we will consider the situation where $\Lambda$ is coisotropic in the sense of Definition \ref{coisodef} (this is the same definition used in \cite{Huang}); under this definition, Legendrian submanifolds are precisely the coisotropic submanifolds of dimension $\frac{1}{2}(\dim Y-1)$.  In the $C^0$ symplectic world, the main result of \cite{HLS} asserts that the image under a symplectic homeomorphism of a coisotropic submanifold of a symplectic manifold is coisotropic provided that is is smooth.  In \cite[Theorem 1.3]{RZ} this is used to deduce an affirmative answer to Question \ref{mainq} (and also its analogue for coisotropic submanifolds) in the special case that $\psi$ is a topological automorphism in the sense of \cite{MSp}.  

However the authors of \cite{RZ} express doubt (in their Remark 4.4) that the same conclusion continues to hold when one considers fully general contact homeomorphisms.  To indicate why one indeed should not blithely assume that obvious analogues of $C^0$ symplectic results hold in the $C^0$ contact context, note that \cite[Theorem 2]{LS} shows that a smooth embedding of a compact $n$-dimensional manifold into $\R^{2n}$ that is a $C^0$-limit of Lagrangian embeddings is itself Lagrangian, whereas \emph{any} smooth embedding of an $n$-dimensional manifold into a $(2n+1)$-dimensional contact manifold that satisfies a mild homotopy-theoretic hypothesis can be $C^0$-approximated by Legendrian embeddings (see \cite[Theorem 2.5]{Et} if $n=1$ and \cite[Theorem 7.25]{CE} if $n>1$).  It is not clear from the proofs of the latter results whether the approximating Legendrian embeddings can be arranged to be the restrictions of a uniformly convergent sequence of contactomorphisms.

While we do not resolve Question \ref{mainq} here, we do give an affirmative answer under a significantly weaker hypothesis on the conformal factors of the approximating sequence $\{\psi_m\}$ for the contact homeomorphism $\psi$ than in \cite[Theorem 1.3]{RZ} (which required these conformal factors to converge uniformly).  Specifically we will require $\psi$ to be \textbf{bounded below near} all points of $C$ in the sense of Definition \ref{bddef}; for example if $\alpha$ is a contact form for $\xi$ and if the approximating sequence $\psi_m$ has $\psi_{m}^{*}\alpha=f_m\alpha$ this will hold if the functions $|f_m|$ satisfy $m$-independent positive lower bounds on some neighborhood of $C$.  Our main result is then:

\begin{theorem}\label{main} Suppose that $(Y,\xi)$ is a contact manifold and $\psi\co Y\to Y$ is a contact homeomorphism.  If $C$ is a coisotropic submanifold of $Y$ such that $\psi(C)$ is smooth and such that $\psi$ is bounded below near every point of $C$, then $\psi(C)$ is coisotropic.
\end{theorem}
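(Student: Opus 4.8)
The plan is to deduce Theorem \ref{main} from two properties of a notion of local rigidity of a point on a locally closed subset of $Y$, adapting to the contact setting the strategy of \cite{U3}. Since being coisotropic is a condition to be checked one point at a time, and since $\psi(C)$ is locally closed (being the homeomorphic image of the locally closed set $C$), it suffices to prove that for every $p\in C$ the point $\psi(p)$ is \emph{locally rigid} in $\psi(C)$, and then to invoke: (i) a smooth submanifold all of whose points are locally rigid in it is coisotropic, and conversely every point of a coisotropic submanifold is locally rigid in it; and (ii) if a contact homeomorphism $\psi\co Y\to Y$ is bounded below near a point $p$, and $p$ is locally rigid in a locally closed subset $X\subset Y$ containing $p$, then $\psi(p)$ is locally rigid in $\psi(X)$. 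Granting (i) and (ii): each $p\in C$ is locally rigid in the coisotropic $C$ by the converse half of (i); since $\psi$ is bounded below near $p$, part (ii) makes $\psi(p)$ locally rigid in $\psi(C)$; and as $\psi(C)$ is smooth, (i) then shows $\psi(C)$ is coisotropic.

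Property (i) is local and contact-dynamical. Its proof rests on a normal form for coisotropic submanifolds in the sense of \cite{Huang} together with the observation that a contact Hamiltonian (with respect to a local contact form $\alpha$) vanishing on a coisotropic submanifold near $p$ has contact vector field tangent to that submanifold there, so its flow preserves the submanifold near $p$ --- this is what makes $p$ locally rigid. Conversely, if $X$ is smooth but fails to be coisotropic at $p$, a linear-algebra argument with $d\alpha|_\xi$ and the Reeb direction produces a contact Hamiltonian supported in an arbitrarily small prescribed neighborhood of $p$ and vanishing on $X$ near $p$ whose flow moves $p$ off $X$ to a definite distance relative to the scale of its support; since this non-rigidity survives $C^0$-small perturbations of the data, $p$ is not locally rigid in $X$. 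As in \cite{U3}, local rigidity is formulated precisely so that these two assertions hold.

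Property (ii) is the heart of the matter, and is where the boundedness-below hypothesis enters. Since $\psi$ is bounded below near $p$, we may fix an approximating sequence $\psi_m\to\psi$ of contactomorphisms and, near $p$, a contact form $\alpha$ with $\psi_m^*\alpha=f_m\alpha$ and $|f_m|\geq c>0$ on a fixed neighborhood $W$ of $p$ for all $m$. To test local rigidity of $\psi(p)$ in $\psi(X)$ one is handed a contact Hamiltonian $K$, supported in a neighborhood $U'$ of $\psi(p)$ that we may take small and vanishing on $\psi(X)$ there, whose flow $\phi_K^t$ one must control; following \cite{HLS}, the natural move is to conjugate by $\psi_m$. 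One computes that $\psi_m^{-1}\circ\phi_K^t\circ\psi_m$ is the contact Hamiltonian flow of $L_m=(K\circ\psi_m)/f_m$, whose support $\psi_m^{-1}(U')$ lies in $W$ once $U'$ is small and $m$ is large, and which obeys $\|L_m|_X\|_{C^0}\leq c^{-1}\|K\circ\psi_m|_X\|_{C^0}\to 0$, because $\psi_m(X)\to\psi(X)$ uniformly, $K$ vanishes on $\psi(X)$ near $\psi(p)$, and $|f_m|\geq c$. Thus the $L_m$ are supported in a fixed small neighborhood of $p$ and uniformly $C^0$-small on $X$, while $\psi_m^{-1}\circ\phi_K^t\circ\psi_m\to\psi^{-1}\circ\phi_K^t\circ\psi$ uniformly; a quantitative form of contact coisotropic rigidity --- namely that such a $C^0$-convergent sequence of contactomorphisms must have a limit preserving $X$ near $p$, which is exactly the content of $p$ being locally rigid in $X$ --- then gives $\psi^{-1}(\phi_K^1(\psi(p)))\in X$, i.e. $\phi_K^1(\psi(p))\in\psi(X)$. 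Hence $\psi(p)$ cannot be pushed off $\psi(X)$ by such a $K$, which is what local rigidity of $\psi(p)$ in $\psi(X)$ demands.

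The hard part is the quantitative contact coisotropic rigidity used above --- the contact analogue of the Humili\`ere-Leclercq-Seyfaddini theorem --- and especially arranging it to be robust enough to tolerate that the $L_m$ are controlled only in $C^0$ along $X$: their derivatives along $X$ involve the uncontrolled derivatives of $\psi_m$, so the argument must exploit the $C^0$-convergence of the contactomorphisms $\psi_m^{-1}\phi_K^t\psi_m$ themselves rather than any regularity of the generating Hamiltonians. Following \cite{U3}, this is managed by phrasing local rigidity as a purely $C^0$ notion and working with the contact reduction attached to the normal form near $p$. The conformal factors enter precisely through the division by $f_m$ in $L_m=(K\circ\psi_m)/f_m$: a uniform positive lower bound on $|f_m|$ near $C$ is exactly what prevents the $L_m$ from blowing up on $C$, and for sequences lacking such a bound --- such as the accelerating examples of Section \ref{accel} --- the conjugated Hamiltonians need not remain small on $C$, consistent with the reservations of \cite{RZ} about Question \ref{mainq} for fully general contact homeomorphisms.
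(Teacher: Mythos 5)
Your overall skeleton (characterize coisotropy via a local rigidity notion, then show rigidity is preserved by contact homeomorphisms that are bounded below) matches the paper, but the two load-bearing ingredients are not actually established, and one of them is asserted in a form the paper explicitly cannot prove. First, you claim that \emph{every} point of a coisotropic submanifold is locally rigid, and you justify it by observing that a contact Hamiltonian vanishing on $C$ has Hamiltonian vector field tangent to $C$, so its flow preserves $C$. That observation (which the paper does use, in Lemma \ref{coisovanish} and Proposition \ref{legexists}) does not establish local rigidity: local rigidity must be a statement about \emph{arbitrary} compactly supported contact isotopies near $p$ --- namely a positive lower bound on the energy $\int_0^1\max|\alpha(V_t)|\,dt$ of any isotopy whose time-one map disjoins a small neighborhood of $p$ from $C$ (Definitions \ref{disjdef}--\ref{rigiddef}) --- not merely about flows of Hamiltonians that happen to vanish on $C$. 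Proving that positive lower bound is the hard analytic core of the paper: it is done only for Legendrians, via the holomorphic-curve energy estimate for disjoining a pre-Lagrangian from $\mathbb{R}\times\Lambda$ in the symplectization (Lemma \ref{rs}, Theorem \ref{hypermain}, Corollary \ref{leglr}), and is then transferred to non-Legendrian coisotropics only at the \emph{dense open set} of points transverse to $\xi$, through Legendrians contained in $C$ (Proposition \ref{legexists}, Corollaries \ref{coleg} and \ref{corigid}). Indeed the paper states that whether every point of a non-Legendrian coisotropic is locally rigid is unknown; the theorem is salvaged because Corollary \ref{corigid} only requires a dense open set of rigid points. Your proposal contains none of this input, so assertion (i) is a genuine gap, not a routine lemma.

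Second, your step (ii) tacitly changes the notion of rigidity: you test $\psi(p)$ only against Hamiltonians $K$ \emph{vanishing on} $\psi(X)$, conjugate by $\psi_m$ to get $L_m=(K\circ\psi_m)/f_m$ small on $X$, and then invoke ``a quantitative form of contact coisotropic rigidity'' saying that $C^0$-limits of contactomorphisms generated by Hamiltonians $C^0$-small on $X$ preserve $X$. That invoked statement is essentially the contact analogue of the Humili\`ere--Leclercq--Seyfaddini theorem, i.e.\ a result of the same depth as the theorem you are trying to prove; assuming it makes the argument circular. In the paper the analogous step (Proposition \ref{rigpres}) is deliberately soft: with rigidity defined by disjunction energy, one conjugates an arbitrary disjoining isotopy by $\psi_m$, uses $|\alpha'(V_t)\circ\psi_m|=|f_m||\alpha(\psi_{m*}^{-1}V_t)|$ and the uniform lower bound $|f_m|\ge c$ to compare energies, and all the hard content is already packaged in the Legendrian energy bound. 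So the conjugation computation in your step (ii) is in the right spirit, but without the energy formulation and without the holomorphic-curve lower bound (or some substitute), neither (i) nor (ii) as you state them is proved, and the deduction of Theorem \ref{main} does not go through.
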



\begin{remark}
A contact form $\alpha$ on a $(2n+1)$-dimensional manifold $Y$ induces a Borel measure $\mu_{\alpha}$ on $Y$ by setting $\mu_{\alpha}(E)=\int_{E}\alpha\wedge (d\alpha)^{\wedge n}$.  If $\psi\co Y\to Y$ is a contactomorphism, say with $\psi^{*}\alpha=f\alpha$, then evidently one has \[ \mu_{\alpha}(\psi(E))=\int_{E}|f|^{n+1}\alpha\wedge (d\alpha)^{\wedge n}\] for all Borel sets $E$.  Thus imposing local lower bounds on the absolute values of the functions $f_m$ given by $\psi_{m}^{*}\alpha=f_m\alpha$ for an approximating sequence $\psi_m$ for a contact homeomorphism $\psi$ amounts to imposing  lower bounds on the ratios $\frac{\mu_{\alpha}(\psi_m(U))}{\mu_{\alpha}(U)}$ for appropriate open sets $U$, or equivalently local lower bounds on the Jacobian determinants of the $\psi_m$ when these are expressed in local coordinates.  

If $\{\psi_m\}_{m=1}^{\infty}$ is any sequence of contactomorphisms such that both $\psi_m$ and $\psi_{m}^{-1}$ obey uniform local bounds on their Lipschitz constants, then the Arzel\`a-Ascoli theorem implies that some subsequence of $\{\psi_m\}$ converges in the compact-open topology to a contact homeomorphism $\psi$, and then both $\psi$ and $\psi^{-1}$ will be bounded below near every point.
\end{remark}

Neither the hypothesis nor the conclusion of Theorem \ref{main} is manifestly preserved under replacing $\psi$ by $\psi^{-1}$; rather, applying Theorem \ref{main} to $\psi^{-1}$ leads to the statement that if $\psi$ is a contact homeomorphism that is bounded \emph{above} near all points of $N$ and if $N$ and $\psi(N)$ are both smooth submanifolds with $N$ \emph{not} coisotropic then $\psi(N)$ is also not coisotropic.  The following theorem, proven in Section \ref{exsect} shows however that the situation is different if instead of asking whether the whole image $\psi(N)$ is coisotropic one just asks whether it is coisotropic at an isolated point. 

\begin{theorem}\label{exprop}
For any contact manifold  $(Y,\xi)$  of dimension $2n+1\geq 3$, there exist contact homeomorphisms $\psi\co Y\to Y$ and smooth $n$-dimensional submanifolds $\Lambda\subset Y$ such that $\psi(\Lambda)$ is a smooth submanifold and, for some point $p\in \Lambda$, we have $T_p\Lambda\nsubset\xi_p$ but $T_{\psi(p)}\psi(\Lambda)\subset \xi_p$. In fact, $\psi$ can be chosen to have any of the following properties:
\begin{itemize}\item[(i)]  $\psi$ is bounded both above and below near $p$; or
\item[(ii)] $\psi$ is bounded below but not above near $p$; or
\item[(iii)] $\psi$ restricts to $\Lambda$ as a smooth map, and is bounded above but not below near $p$. \end{itemize}
\end{theorem}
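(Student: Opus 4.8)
The plan is to produce all the examples inside a single Darboux ball, so I may assume $Y=\R^{2n+1}$ with $\alpha=dz-\sum_{i=1}^n y_i\,dx_i$, that $p=0$, and that every contactomorphism in sight is the identity outside a fixed compact set (hence extends to an arbitrary $(Y,\xi)$). Since the conditions $T_p\Lambda\nsubset\xi_p$ and $T_{\psi(p)}\psi(\Lambda)\subset\xi_{\psi(p)}$ concern only one tangent direction, it is enough to create the phenomenon for a curve and then stabilize. Thus I would first build, in the contact $3$-space $\R^3_{z,x_1,y_1}$, a contact homeomorphism $\psi_0$ and a curve $\gamma$ through $0$ transverse to $\ker(dz-y_1dx_1)$ at $0$ with $\psi_0(\gamma)$ smooth and tangent to the contact structure at $\psi_0(0)$; then set $\Lambda=\gamma\times L$ with $L=\{y_2=\cdots=y_n=0\}$ the Lagrangian plane in the remaining symplectic coordinates, and extend $\psi_0$ to $\R^{2n+1}$. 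When $\psi_0$ is a limit of \emph{strict} contactomorphisms (conformal factor $\equiv 1$) the extension is just $\psi_0\times\mathrm{id}$; otherwise one uses the ``conformally twisted'' extension $(w,x_2,y_2,\dots)\mapsto(\psi_0(w),x_2,(f_0\circ\pi)y_2,\dots)$, which is again a contactomorphism, has conformal factor $f_0\circ\pi$, carries $\Lambda$ to $\psi_0(\gamma)\times L$, and preserves the $3$-dimensional conformal-factor bounds.

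For part (i) I would use strict contactomorphisms, so the conformal-factor hypotheses hold trivially; the mechanism is that the contact homeomorphism $\psi_0=\lim_m\psi_{0,m}$ need not be differentiable at $0$, so the tangent line of the smooth curve $\psi_0(\gamma)$ at $\psi_0(0)$---which is the tangent line of the \emph{Hausdorff} limit of the curves $\psi_{0,m}(\gamma)$---is under no obligation to be the limit of the lines $T_{\psi_{0,m}(0)}\psi_{0,m}(\gamma)$, each of which is transverse to $\xi$ because $\psi_{0,m}$ is a contactomorphism. I would realize $\psi_0$ as a composition $\psi_{0,m}=\theta_m\circ\cdots\circ\theta_1$ of strict contactomorphisms supported in neighborhoods of $0$ shrinking fast enough that $\psi_{0,m}$ and $\psi_{0,m}^{-1}$ both converge uniformly, while at each scale $\theta_k$ tips $\gamma$ by a definite amount, arranged so that the limiting curve is smooth and tangent to $\xi$ at $0$ (say a Legendrian arc such as the $x_1$-axis). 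Part (ii) is the same in spirit, but the pieces $\theta_k$ are taken with conformal factors everywhere $\geq 1$ yet growing without bound on their (shrinking) supports near $0$; this extra room is what permits the limiting tangent-line rotation while keeping $\psi_0$ a homeomorphism, and by the volume interpretation of the conformal factor in the Remark it forces $\psi_0$ to expand $\mu_\alpha$-volume by arbitrarily large factors on small open sets clustering at $0$, so that no approximating sequence for $\psi_0$ can have conformal factors bounded above near $0$.

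For part (iii) I would instead take $\psi_0$ (hence $\psi$) to be a genuinely \emph{smooth} bijection that is a $C^0$-limit (in fact a $C^1$-limit) of contactomorphisms but fails to be one because its differential at $0$ is singular, killing the transverse direction $T_0\gamma$; this is the contact analogue of $t\mapsto t^3$ in the Reeb direction, but it must be arranged to remain injective, unlike the naive ``diagonal'' degeneration, whose conformal factor vanishes along a hypersurface and which collapses the corresponding $y$-coordinate. Being smooth, $\psi_0$ restricts smoothly to $\gamma$; having bounded differential near $0$, it is a limit of contactomorphisms with conformal factors bounded above near $0$, so $\psi$ is bounded above near $p$; and since $\det D\psi_0$ vanishes at $0$, the map contracts $\mu_\alpha$-volume by arbitrarily large factors near $0$, so, again via the Remark, $\psi$ is not bounded below near $p$. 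The tangent line $T_{\psi_0(0)}\psi_0(\gamma)$ is then governed not by $D\psi_0(0)$ (which annihilates $T_0\gamma$) but by the leading nonvanishing term of $t\mapsto\psi_0(\gamma(t))$, whose higher-order data I would choose so that $\psi_0(\gamma)$ is a smooth, indeed Legendrian, arc.

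The main obstacle throughout is the explicit construction of the approximating contactomorphisms: one must simultaneously (a) secure uniform convergence of $\psi_{0,m}$ and $\psi_{0,m}^{-1}$ to a homeomorphism, (b) arrange that $\psi_0(\gamma)$ comes out \emph{smooth}---the hardest point, since the obvious ways of rotating a transverse curve into $\xi$ in the limit produce corners, cusps, or infinitely winding tangents---and (c) for (ii) and (iii), verify that no approximating sequence attains the forbidden conformal-factor bound, which I would extract from the $\mu_\alpha$-volume distortion estimates of the Remark together with the constructed behavior of $\psi_0$ near $0$. I expect (b) to absorb most of the effort and to require genuinely ``mixed'' contactomorphisms rather than point transformations, front-projection lifts, or diagonal maps: for each of those classes the slope-type coordinates of the lift are built out of first derivatives, so $C^0$-convergence of a sequence already forces $C^1$-convergence, hence a smooth limit which preserves tangency to $\xi$.
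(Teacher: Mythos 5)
Your outline correctly identifies the phenomenon to be produced, but the entire mathematical content of Theorem \ref{exprop} lies in the step you defer: actually constructing sequences of contactomorphisms that $C^0$-converge to a homeomorphism carrying a transverse submanifold onto a \emph{smooth} submanifold tangent to $\xi$, with the prescribed conformal-factor behavior, and then verifying (against \emph{all} approximating sequences) the boundedness claims. You acknowledge that points (a)--(c), above all the smoothness of the image, are the hard part and leave them unconstructed, so as written this is a strategy rather than a proof. The paper does exactly this work twice: a contact adaptation of the Buhovsky--Opshtein construction (Proposition \ref{boprop}, Corollary \ref{bocor}) gives variation (i); an explicit contact-Hamiltonian flow $H_{F,\rho}=zF(-\log\rho(\vec{y},z))$ collapsing a neighborhood of a Legendrian torus (Propositions \ref{chomeo} and \ref{wall}, Examples \ref{fourfinite} and \ref{fourinf}, Corollary \ref{nonbound}) gives variation (iii), including smoothness of $\psi|_{\Lambda}$ and the failure of lower bounds via Proposition \ref{bdcrit}; and variation (ii) is not a third construction but the composition $\psi_1\circ\psi_{3}^{-1}$ of the two.

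Beyond the missing constructions, two of your steps would fail as stated. First, part (i) via \emph{strict} contactomorphisms is vacuous: if $\theta^*\alpha=\alpha$ for $\alpha=dz-\sum_jy_jdx_j$ then $\theta_*\partial_z=\partial_z$, so $\theta$ commutes with translation in the Reeb direction; combined with compact support in a small ball inside the Darboux chart this forces $\theta=\mathrm{id}$, so there are no nontrivial pieces $\theta_k$ to compose. One must instead allow conformal factors merely pinched between positive constants, which is what Proposition \ref{boprop} arranges. Second, the $3$-dimensional reduction plus ``conformally twisted'' stabilization breaks precisely in the unbounded cases: the extension multiplies the added $y_j$-coordinates by the conformal factor, so if the factors of the approximating maps blow up near $0$ (your case (ii)) the extended sequence is not uniformly convergent and does not even preserve a fixed ball, while if they tend to $0$ at $p$ (case (iii)) the limit collapses the $y_j$-slices over that locus and is not injective---the same defect you attribute to the naive diagonal degeneration. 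This is why the paper's constructions are carried out directly in dimension $2n+1$. Relatedly, in (iii) the inference ``bounded differential near $0$, hence a limit of contactomorphisms with conformal factors bounded above near $0$'' is unjustified: boundedness above is an existence statement about an approximating sequence and must be proved by exhibiting one and estimating its factors, as in Proposition \ref{chomeo}; it cannot be read off from $D\psi_0$ alone.
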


\begin{remark}
In \cite[Section 5.3.3]{Mas}, Massot suggests an alternative definition of a contact homeomorphism of a compact contact manifold $(Y,\xi)$ as  a homeomorphism of $Y$ that is bi-Lipschitz with respect to one and hence any Carnot-Carath\'eodory distance induced by $\xi$ (declaring the distance between two points to be the infimal length of a Legendrian arc connecting them, as measured by an auxiliary Riemannian metric).  It is noted on \cite[p. 89]{Mas} that this is probably a different notion than the one based on $C^0$-limits that we use in this paper, and the examples in Section \ref{accel} (which are the ones that we use to prove variation (iii) of Theorem \ref{exprop}) confirm this expectation.  Indeed, restricting to the three-dimensional case for ease of notation, these contact homeomorphisms $\psi$ are given in the hypersurface $\{y=0\}$ within a Darboux cube $((-1,1)^3,\ker(dz-ydx))$ by $\psi(x,0,z)=(x,0,g(z))$ where the function $g\co \R\to\R$, a general formula for appears in Proposition \ref{wall}, typically has $g'(0)=0$ and can even be arranged to vanish to infinite order at $0$ as in Example \ref{fourinf}.  In particular $\psi$ preserves the $z$ axis and restricts to it as a non-bi-Lipschitz function (with respect either to the standard metric or the restriction of the Carnot-Carath\'eodory distance). \end{remark}


\subsection{Outline of the paper}

The proof of Theorem \ref{main} is a contact version of the author's recent re-proof in \cite{U3} of the Humili\`ere-Leclercq-Seyfaddini theorem \cite{HLS} on $C^0$-rigidity of coisotropic submanifolds of symplectic manifolds.  As in \cite{U3}, the plan is to characterize coisotropic submanifolds in terms of a notion that we call ``local rigidity'' and prove that this notion is invariant under the appropriate class of homeomorphisms.  The difference in length between Sections \ref{rigsect} through \ref{coisosect} of this paper and \cite[Sections 1 and 2]{U3} is explained by a combination of the contact geometric case being objectively more complicated and the theory of coisotropic submanifolds in contact geometry being less well-developed than that of their  counterparts in symplectic geometry.\footnote{One indication of this underdevelopment is that the very recent sources \cite{RZ},\cite{LdL}, \cite{M19} all have conflicting definitions of a coisotropic submanifold of a contact manifold; as mentioned earlier our definition is that used in \cite{Huang} and \cite{RZ}.}  

Section \ref{rigsect} defines our notion of a point $p$ on a locally closed subset $N$ of a contact manifold being locally rigid with respect to $N$.  The symplectic version of this from \cite{U3} was in terms of the Hofer energy needed to locally disjoin arbitrarily small neighborhoods of $p$ from $N$, and the contact version introduced here is essentially the same but with the Shelukhin norm \cite{She} on the identity component of the contactomorphism group (defined using absolute values of contact Hamiltonians) used in place of the Hofer norm.   A crucial fact about local rigidity  is then Proposition \ref{rigpres}, asserting that if $p$ is locally rigid with respect to $N$ and if $\psi$ is a contact homeomorphism of $Y$ that is bounded below near $p$ in the sense of Definition \ref{bddef}, then $\psi(p)$ is locally rigid with respect to $\psi(N)$.  The need for the boundedness hypothesis can be understood in terms of the fact that the Shelukhin norm $\|\cdot\|$ (unlike the Hofer norm in symplectic topology) is not conjugation-invariant; rather $\|\psi\circ\phi\circ\psi^{-1}\|$ can be bounded in terms of $\|\phi\|$ and the conformal factor of $\psi$.

Section \ref{legrigsect} proves Corollary \ref{leglr}, asserting that points on Legendrian submanifolds $\Lambda$ are always locally rigid; this is the only point in the paper that depends on pseudoholomorphic curve techniques.  To prove it we show in Theorem \ref{hypermain} that, under suitable assumptions, there is a positive lower bound on the Shelukhin norm of a contactomorphism that disjoins a given pre-Lagrangian submanifold from $\Lambda$; this follows from Lemma \ref{rs} which establishes a lower bound for the Hofer norm of a symplectomorphism of the symplectization that disjoins a compact Lagrangian submanifold from $\R\times \Lambda$, using a number of technical ingredients from \cite{RS} and references therein.  Lemma \ref{rs} requires a rather restrictive hypothesis---hypertightness in the sense of Definition \ref{hyperdef}---on $\Lambda$, but because our definition of local rigidity is indeed local we can use tubular neighborhood theorems to deduce relevant information from Lemma \ref{rs} about any Legendrian submanifold, even one that is not closed as a subset.  We also observe in Corollary \ref{htr} that Theorem \ref{hypermain} implies, in the special case of hypertight Legendrians, \cite[Conjecture 1.10]{RZ} on the contact analogue of Chekanov-Hofer pseudometrics on orbits of submanifolds.  

The proof of Theorem \ref{main} is completed at the end of Section \ref{coisosect}, in which we characterize coisotropic submanifolds in terms of local rigidity.  Proposition \ref{nonco} shows that a point $p$ on a submanifold $C$ is locally rigid only if $C$ is coisotropic at $p$; this follows by a variation on arguments from \cite{U1},\cite{RZ}. Unlike in the symplectic case (see \cite[Theorem 2.1]{U3}) it is not known to the author whether the converse to this holds, except in the case that $C$ is Legendrian in which case the converse is already given by Corollary \ref{leglr}.  The reason is that, on non-Legendrian coisotropic submanifolds $C$ of contact manifolds $(Y,\xi)$, there are two fundamentally different types of points $p$: those for which $T_pC\subset \xi_p$, and those at which $C$ is transverse to $\xi$.  However, as we show in Corollary \ref{coleg}, points of the latter type form an open dense subset of $C$, and moreover any such point is contained in a Legendrian submanifold that is in turn contained in $C$.  (The behavior of $C$ near those points $p$ where $T_pC\subset \xi_p$ can, on the other hand, be quite complicated, cf. \cite{Huang}.)  Given Corollary \ref{leglr} and Proposition \ref{nonco}, it then follows that the coisotropic submanifolds are precisely those submanifolds of a contact manifold for which an open and dense subset of the points are locally rigid; Proposition \ref{rigpres} proves that this property is preserved under contact homeomorphisms that are bounded below near every point of the submanifold, thus proving Theorem \ref{main}.

The final Section \ref{exsect} explains the examples referenced in Theorem \ref{exprop}, whose proof is completed at the very end of the paper.  One of these constructions (see Section \ref{bosect}) is obtained by a straightforward modification of a construction from \cite[Section 4]{BO}; this yields a contact homeomorphism $\psi$ that is bounded both above and below, which maps a codimension-two contact submanifold $Z$ (the locus where $x_1=y_1=0$ in the notation of Proposition \ref{boprop}) to an explicit non-contact submanifold, though the behavior of the contact homeomorphism away from this contact submanifold seems difficult to understand.  By restricting to submanifolds of $Z$ one obtains in Corollary \ref{bocor} the examples indicated in item (i) of Theorem \ref{exprop}, as well as similar examples which, instead of being Legendrian, are coisotropic of some codimension smaller than $n+1$.   The other construction (in Section \ref{accel}) is perhaps more distinctively contact-geometric, and uses the flow of an explicit time-dependent contact Hamiltonian vector field on the complement of a Legendrian torus $T$ that extends continuously over the torus and whose flow contracts small neighborhoods of $T$ by increasingly large factors as one approaches $T$. This construction, unlike the other one, leads to $\psi|_{\Lambda}$ being a smooth map (not just to $\psi(\Lambda)$ being a smooth submanifold). In fact the approximating sequence $\psi_m$ to $\psi$ has the property that, where $\Lambda$ is as in Proposition \ref{exprop}, $\psi_m|_{\Lambda}$ converges to $\psi|_{\Lambda}$ in $C^1$ (conceivably this could be improved to $C^{\infty}$ for a different choice of approximating sequence).  We obtain a rather clearer global understanding of the examples in Section \ref{accel} than we do of those in Section \ref{bosect}; in fact for a variation on the construction that results in $\psi(\Lambda)$ only being a $C^1$-submanifold rather than a smooth one we are even able to write down an explicit formula for $\psi$ in Example \ref{square}.

Note that for $\psi$ as in either Section \ref{bosect} or Section \ref{accel} 
(corresponding to variations (i) and (iii) of Theorem \ref{exprop}), Theorem 
\ref{main} is applicable to $\psi^{-1}$, and shows that $\psi^{-1}$ cannot 
map a Legendrian submanifold to a non-Legendrian submanifold, whereas by 
Theorem \ref{exprop} $\psi^{-1}$ does map the submanifold $\psi(\Lambda)$ 
that is Legendrian \emph{at a point}\footnote{In fact, inspection of the 
examples shows that $\psi(\Lambda)$ has a codimension-one submanifold 
consisting of points at which it is Legendrian.}  to a non-Legendrian 
submanifold.

\subsection*{Acknowledgements} I am grateful to Will Kazez for helpful conversations, and to Jun Zhang for insightful discussions and useful feedback on the preliminary version of the paper. This work was supported by the NSF through the grant DMS-1509213.

\section{Local rigidity and boundedness}\label{rigsect}

Our proof of Theorem \ref{main} is based on characterizing coisotropic submanifolds of contact manifolds in terms of a notion of local rigidity.  In the symplectic context similar ideas were developed in \cite{U3}; the arguments in the contact context require somewhat more care due to issues relating to conformal factors.  These issues  lead to an addditional hypothesis in our invariance statement, namely Proposition \ref{rigpres}, compared to the symplectic case (\cite[Proposition 1.4]{U3}), and this is the reason for the boundedness hypothesis in Theorem \ref{main}.

Local rigidity is, true to its name, a local property; consequently there is no need to make any compactness or coorientability hypotheses on our contact manifold $(Y,\xi)$, because we can always localize to subsets $U$ having compact closure with $\xi|_{\bar{U}}$ coorientable.

If $W$ is an open subset of a contact manifold $(Y,\xi)$ let $\mathcal{C}_W(Y,\xi)$ denote the space of smooth time-dependent contact vector fields $\mathbb{V}=(V_t)_{t\in [0,1]}$ having compact support contained in $[0,1]\times W$.  For $t\in [0,1]$ we write $\psi^{\mathbb{V},t}$ for the time-$t$ flow of such a vector field.  A choice of contact form $\alpha$ for $\xi|_{W}$ (assuming that one exists, \emph{i.e.} that $\xi|_W$ is coorientable, as will be true for small enough $W$) sets up a one-to-one correspondence between $\mathcal{C}_W(Y,\xi)$ and the space of smooth functions $H\co [0,1]\times Y\to\R$ having compact support contained in $[0,1]\times W$, by setting $H(t,\cdot)=\alpha(V_t)$.

\begin{dfn}\label{disjdef}
Given a contact manifold $(Y,\xi)$, a subset $N\subset Y$,  open subsets $U,W\subset Y$ with $\bar{U}\subset W$ and $N\cap W$ closed as a subset of $W$, and a one-form $\alpha$ on $W$ with $\ker\alpha=\xi|_W$, we define the \textbf{$\alpha$-disjunction energy of $U$ and $N$ rel $W$} as \[ e_{\alpha}^{W}(U,N)=\inf\left\{\left.\int_{0}^{1}\max_W|\alpha(V_t)|dt\right|\mathbb{V}=(V_t)_{t\in[0,1]}\in \mathcal{C}_W(Y,\xi),\,\psi^{\mathbb{V},1}(\bar{U})\cap N=\varnothing\right\}.\]
\end{dfn}

We record some straightforward properties of this quantity, leaving proofs to the reader:

\begin{prop}\label{basice} For $(Y,\xi),N,U,W,\alpha$ as in Definition \ref{disjdef}:
\begin{itemize} \item[(i)] If $\beta=f\alpha$ is another contact form inducing the contact structure $\xi|_W$ on $W$, then \[ \left(\inf_W|f|\right)e_{\alpha}^{W}(U,N)\leq e_{\beta}^{W}(U,N)\leq\left(\sup_W|f|\right)e_{\alpha}^{W}(U,N).\]
\item[(ii)] If $\phi\co Y\to Y'$ is an isocontact embedding between contact manifolds of the same dimension and if $\alpha'$ is a contact form on $\phi(W)$, then \[ e_{\phi^*\alpha'}^{W}(U,N)=e_{\alpha'}^{\phi(W)}(\phi(U),\phi(N)).\]
\item[(iii)] If $N\cap W=N'\cap W$ then $e_{\alpha}^{W}(U,N)=e_{\alpha}^{W}(U,N')$.  
\item[(iv)] If $W\subset W'$, if $\alpha'|_W=\alpha$, and if $N\cap W'$ is closed in $W'$ then $e_{\alpha'}^{W'}(U,N)\leq e_{\alpha}^{W}(U,N)$.
\item[(v)] If $N\subset N'$ with $N'\cap W$ closed in $W$ then $e_{\alpha}^{W}(U,N')\geq e_{\alpha}^{W}(U,N)$.\end{itemize}
\end{prop}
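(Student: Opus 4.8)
The plan is to verify all five items by directly unwinding Definition \ref{disjdef}; each is elementary, and the recurring mechanism is the following observation: if $\mathbb{V}=(V_t)_{t\in[0,1]}\in\mathcal{C}_W(Y,\xi)$ then every $V_t$ vanishes on $Y\setminus W$, so $\psi^{\mathbb{V},1}$ fixes $Y\setminus W$ pointwise, preserves the open set $W$, and (since $\bar U\subset W$) satisfies $\psi^{\mathbb{V},1}(\bar U)\subset W$; hence $\psi^{\mathbb{V},1}(\bar U)\cap N=\psi^{\mathbb{V},1}(\bar U)\cap(N\cap W)$ for any subset $N$. Moreover, because $\bar U\subset W$, the closure $\bar U$ equals the closure of $U$ taken within $W$, so it is intrinsic to the pair $(W,U)$.

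For (i), note that the index set $\mathcal{C}_W(Y,\xi)$ and the disjunction constraint $\psi^{\mathbb{V},1}(\bar U)\cap N=\varnothing$ depend only on $\xi|_W$, not on the choice of defining form, so $e_{\alpha}^{W}(U,N)$ and $e_{\beta}^{W}(U,N)$ are infima of two different functionals over the very same admissible set. Since $\beta(V_t)=f\cdot\alpha(V_t)$ on $W$, for each $t$ one has $(\inf_W|f|)\max_W|\alpha(V_t)|\le\max_W|\beta(V_t)|\le(\sup_W|f|)\max_W|\alpha(V_t)|$; integrating in $t\in[0,1]$ and passing to the infimum gives (i). For (iii)--(v) one compares admissible sets. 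If $N\cap W=N'\cap W$, the disjunction constraints for $N$ and for $N'$ coincide by the observation above while the minimized functional does not involve $N$, giving (iii). If $N\subset N'$, then disjoining $\bar U$ from $N'$ already disjoins it from $N$, so the admissible set shrinks as one passes from $N$ to $N'$ and the infimum can only grow, giving (v). For (iv) one uses $\mathcal{C}_W(Y,\xi)\subset\mathcal{C}_{W'}(Y,\xi)$ together with the fact that for $\mathbb{V}$ in the smaller space $\alpha'(V_t)$ vanishes off $W$ and equals $\alpha(V_t)$ on $W$ (as $\alpha'|_W=\alpha$), so $\max_{W'}|\alpha'(V_t)|=\max_W|\alpha(V_t)|$ and the constraint is unchanged, whence the infimum over $\mathcal{C}_{W'}$ is at most that over $\mathcal{C}_W$.

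For (ii), the restriction $\phi|_W$ is a contactomorphism from $(W,\xi|_W)$ onto $(\phi(W),\xi'|_{\phi(W)})$, and pushing forward (and extending by $0$, which is smooth since the supports are compact subsets of the open sets $W$ and $\phi(W)$) sets up a bijection $\mathcal{C}_W(Y,\xi)\to\mathcal{C}_{\phi(W)}(Y',\xi')$, $\mathbb{V}\mapsto\phi_*\mathbb{V}$, under which the flows are conjugate: $\psi^{\phi_*\mathbb{V},1}\circ\phi=\phi\circ\psi^{\mathbb{V},1}$. Since $\phi$ is injective this turns $\psi^{\mathbb{V},1}(\bar U)\cap N=\varnothing$ into $\psi^{\phi_*\mathbb{V},1}(\phi(\bar U))\cap\phi(N)=\varnothing$, where $\phi(\bar U)$ is the closure of $\phi(U)$ within $\phi(W)$ (since $\phi|_W$ is a homeomorphism) and $\phi(N)\cap\phi(W)=\phi(N\cap W)$; finally $(\phi^*\alpha')(V_t)=\bigl(\alpha'(\phi_*V_t)\bigr)\circ\phi$, so $\max_W|(\phi^*\alpha')(V_t)|=\max_{\phi(W)}|\alpha'(\phi_*V_t)|$. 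Thus the two functionals agree term by term under the bijection and the infima coincide.

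I do not anticipate a genuine obstacle here; the only portion warranting a moment of attention is the bookkeeping in (ii), and specifically the point that all the relevant closures can be taken inside the open sets $W$ and $\phi(W)$, so that the hypothesis ``$\bar U\subset W$'' makes the disjunction condition transport cleanly even though $\phi$ is merely an embedding and $\phi(Y)$ need not be closed in $Y'$.
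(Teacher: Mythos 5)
Your proof is correct; the paper in fact offers no argument here (it explicitly leaves these verifications to the reader), and your direct unwinding of Definition \ref{disjdef} --- pointwise comparison of $|\beta(V_t)|$ and $|\alpha(V_t)|$ over the same admissible set for (i), the pushforward bijection $\mathbb{V}\mapsto\phi_*\mathbb{V}$ with conjugate flows for (ii), and comparison of admissible sets for (iii)--(v) --- is exactly the intended routine argument. Your attention to the closure bookkeeping in (ii) (that $\phi(\bar U)$ is the closure of $\phi(U)$ inside $\phi(W)$, and that any extra closure points in $Y'$ lie outside $\phi(Y)$, are fixed by the flow, and cannot meet $\phi(N)$) is a reasonable way to handle the only genuinely delicate point, which in the paper's applications is anyway moot since $\bar U$ is compact there.
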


\begin{dfn}\label{rigiddef}
Let $(Y,\xi)$ be a contact manifold, $N\subset Y$ a locally closed subset, and $p\in N$.
We say $p$ is \textbf{locally rigid with respect to $N$} if there is a neighborhood $W$ of $p$ in $Y$ having compact closure such that $N\cap W$ is closed in $W$ and, for every neighborhood $U$ of $p$ with $\bar{U}\subset W$, we have $e_{\alpha}^{W}(U,N)>0$ for one and hence any contact form $\alpha$ for $\xi|_W$ that extends continuously to $\bar{W}$.
\end{dfn}

(That our definition of local rigidity is independent of the choice of contact form on $\bar{W}$ representing $\xi$ is immediate from Proposition \ref{basice} (i) and the requirement in the definition that $\bar{W}$ be compact.)

Here are some quick consequences of Proposition \ref{basice} and Definition \ref{rigiddef}:

\begin{prop}\label{basicr}
For $(Y,\xi)$ a contact manifold, $N,N'\subset Y$ locally closed, and $p\in N$:
\begin{itemize}
\item[(i)] If $N\subset N'$ and $p$ is locally rigid with respect to $N$ then $p$ is locally rigid with respect to $N'$.
\item[(ii)] If $p$ is locally rigid with respect to $N$ and if $(\hat{Y},\hat{\xi})$ is another contact manifold containing a locally closed subset $\hat{N}$ and a point $\hat{p}\in \hat{N}$ such that there is a contactomorphism $\phi$ between neighborhoods $V$ of $p$ in $Y$ and $\hat{V}$ of $\hat{p}$ in $\hat{Y}$ satisfying $\phi(p)=\hat{p}$ and $\phi(N\cap V)=\hat{N}\cap \hat{V}$, then $\hat{p}$ is locally rigid with respect to $\hat{N}$.
\end{itemize}
\end{prop}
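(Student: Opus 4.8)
The plan is to derive both parts formally from Proposition \ref{basice}, together with the observation recorded after Definition \ref{rigiddef} that the positivity of $e_{\alpha}^{W}(U,N)$ is independent of the choice of contact form $\alpha$ on the compact set $\bar{W}$. The only genuine point in either part is to shrink the neighborhood $W$ witnessing the local rigidity we start with so that it lies inside the domains of the other relevant data while preserving the ``closed subset'' conditions of Definition \ref{disjdef}.

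For (i), let $W$ (with compact closure, $N\cap W$ closed in $W$) and a contact form $\alpha$ on $\bar W$ witness that $p$ is locally rigid with respect to $N$. Since $N'$ is locally closed with $p\in N'$, pick an open $W''\ni p$ with $N'\cap W''$ closed in $W''$, and then an open $W_0\ni p$ with $\bar{W_0}\subset W\cap W''$ compact; then $N\cap W_0$ and $N'\cap W_0$ are both closed in $W_0$. For any neighborhood $U$ of $p$ with $\bar U\subset W_0$ we have $e_{\alpha}^{W}(U,N)>0$, so Proposition \ref{basice}(iv) (with $W_0\subset W$) gives $e_{\alpha|_{W_0}}^{W_0}(U,N)\geq e_{\alpha}^{W}(U,N)>0$, and then Proposition \ref{basice}(v) (with $N\subset N'$) gives $e_{\alpha|_{W_0}}^{W_0}(U,N')\geq e_{\alpha|_{W_0}}^{W_0}(U,N)>0$; hence $W_0$ witnesses that $p$ is locally rigid with respect to $N'$.

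For (ii), let $W$ and a contact form $\alpha$ on $\bar W$ be as above, and pick an open $W_0\ni p$ with $\bar{W_0}\subset W\cap V$ compact. Then $N\cap W_0$ is closed in $W_0$, and since $\phi$ is bijective on $V$ with $\phi(N\cap V)=\hat N\cap\hat V$ and $W_0\subset V$ we have $\phi(N\cap W_0)=\phi(N\cap V)\cap\phi(W_0)=\hat N\cap\phi(W_0)$. Put $\hat W=\phi(W_0)$, an open neighborhood of $\hat p$ with $\bar{\hat W}=\phi(\bar{W_0})$ compact and $\hat N\cap\hat W=\phi(N\cap W_0)$ closed in $\hat W$, and put $\hat\alpha=(\phi|_{W_0}^{-1})^{*}(\alpha|_{W_0})$, a contact form for $\hat\xi|_{\hat W}$ extending continuously to $\bar{\hat W}$. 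Given any neighborhood $\hat U$ of $\hat p$ with $\bar{\hat U}\subset\hat W$, set $U=\phi^{-1}(\hat U)$, so $\bar U=\phi^{-1}(\bar{\hat U})\subset W_0$ and $\phi(U)=\hat U$. Using Proposition \ref{basice}(iii) (since $\phi(N\cap W_0)$ and $\hat N$ agree on $\hat W$), then Proposition \ref{basice}(ii) for the isocontact diffeomorphism $\phi|_{W_0}\co W_0\to\hat W$ (noting $(\phi|_{W_0})^{*}\hat\alpha=\alpha|_{W_0}$), then Proposition \ref{basice}(iv) and the local rigidity of $p$ with respect to $N$, we get
\[ e_{\hat\alpha}^{\hat W}(\hat U,\hat N)=e_{\hat\alpha}^{\hat W}(\hat U,\phi(N\cap W_0))=e_{\alpha|_{W_0}}^{W_0}(U,N)\geq e_{\alpha}^{W}(U,N)>0. \]
Since $\hat U$ was arbitrary, $\hat W$ witnesses that $\hat p$ is locally rigid with respect to $\hat N$.

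I expect no real obstacle: everything reduces to the bookkeeping of choosing $W_0$ and $\hat W$ so that the compactness and local-closedness hypotheses of Definitions \ref{disjdef} and \ref{rigiddef} hold simultaneously with the inclusions needed for parts (ii)--(v) of Proposition \ref{basice}. It is worth noting that the conformal-factor difficulty responsible for the extra hypothesis in Proposition \ref{rigpres} does not intervene in (ii), precisely because $\phi$ is an honest contactomorphism and Proposition \ref{basice}(ii) applies to it with no loss.
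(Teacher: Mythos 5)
Your argument is correct and follows essentially the same route as the paper: part (i) is deduced from Proposition \ref{basice}(v) (with the routine shrinking of $W$ to keep $N'\cap W_0$ closed), and part (ii) from Proposition \ref{basice}(ii)--(iv) after restricting to a small $W_0\subset W\cap V$. The only cosmetic difference is that you push the form $\alpha$ forward via $\phi$ rather than pulling back an arbitrary contact form on the target as the paper does, which is immaterial by the ``one and hence any contact form'' clause of Definition \ref{rigiddef}.
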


\begin{proof}

(i) follows immediately from Proposition \ref{basice} (v).

For (ii), first note that Proposition \ref{basice} (iv) implies that if $p$ is locally rigid with respect to $N$ then $e_{\alpha}^{W}(U,N)>0$ for \emph{all} sufficiently small precompact neighborhoods  $W$ of $p$ and all open $U\subset W$ with $p\in U$ and $\bar{U}\subset W$, and for any contact form $\alpha$ for the restriction of $\xi$ to the closure of such a neighborhood. If necessary, shrink the open subset $V$ in the assumption of (ii) so that $\xi$ has coorientable restriction to a neighborhood of $\bar{V}$.   Choosing a sufficiently small $W$ that in particular is contained in $V$, and letting $\hat{\alpha}$ be an arbitrary contact form for the restriction of $\hat{\xi}$ to a neighborhood of $\phi(\bar{V})$, Proposition \ref{basice} (ii) and (iii) then imply that $e^{\phi(W)}_{\hat{\alpha}}(\phi(U),\hat{N})=e^{W}_{\phi^*\hat{\alpha}}(U,N)$ for every neighborhood $U$ of $p$ having $\bar{U}\subset W$, which suffices to prove the local rigidity of $\hat{p}=\phi(p)$
with respect to $\hat{N}$.
\end{proof}

As mentioned in the introduction, a \textbf{contact homeomorphism} of a contact manifold $(Y,\xi)$ is by definition a homeomorphism $\psi\co Y\to Y$ that is a limit of a sequence of contact diffeomorphisms with respect to the compact-open topology. (Throughout the paper we refer to convergence with respect to the compact-open topology as ``$C^0$-convergence.'')  Note that since the homeomorphism group of $Y$ is a topological group with respect to the compact-open topology by \cite[Theorem 4]{Ar}, the contact homeomorphisms of $(Y,\xi)$ form a subgroup of the homeomorphism group.

\begin{dfn} \label{bddef} Let $(Y,\xi)$ be a contact manifold, let $p\in Y$, and let $\psi\co Y\to Y$ be a contact homeomorphism. 
\begin{enumerate} \item[(A)] We say that $\psi$ is \textbf{bounded below near $p$} if there are: \begin{itemize}\item[(i)] a sequence $\{\psi_m\}$ of contactomorphisms $C^0$-converging to $\psi$; \item[(ii)] a neighborhood $\mathcal{O}$ of $p$ such that the closure $\bar{\mathcal{O}}$ is compact and $\xi|_{\bar{\mathcal{O}}}$ is coorientable;  and \item[(iii)] contact forms $\alpha$ and $\alpha'$ for the restrictions of $\xi$ to neighborhoods of $\bar{\mathcal{O}}$ and $\psi(\bar{\mathcal{O}})$, respectively, such that $(\psi_{m}^{*}\alpha')|_{\bar{\mathcal{O}}}=f_m\alpha|_{\bar{\mathcal{O}}}$ where \begin{equation}\label{belowcrit} \inf_{m\in\Z_+}\inf_{p\in \bar{\mathcal{O}}}|f_m(p)|>0.\end{equation} \end{itemize}
\item[(B)] We say that $\psi$ is \textbf{bounded above near $p$} if there are $\{\psi_m\}$, $\mathcal{O}$, $\alpha,\alpha'$ as in (A)(i-iii) such that, instead of (\ref{belowcrit}), we have $\sup_{m\in \Z_+}\sup_{p\in\bar{\mathcal{O}}}|f_m(p)|<\infty$.
\end{enumerate}
\end{dfn}

Evidently $\psi$ is bounded below near $p$ if and only if $\psi^{-1}$ is bounded above near $\psi(p)$.

\begin{prop}\label{rigpres}
Let $(Y,\xi)$ be a  contact manifold, let $N\subset Y$ be locally closed, and suppose that $p\in N$ is locally rigid with respect to $N$.  If $\psi\co Y\to Y$ is a contact homeomorphism that is bounded below near $p$ then $\psi(p)$ is locally rigid with respect to $\psi(N)$.
\end{prop}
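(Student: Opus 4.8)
The plan is to show directly that, for a sufficiently small precompact neighborhood $W'$ of $\psi(p)$ on which $\psi(N)$ is closed, the disjunction energy $e_{\alpha'}^{W'}(U',\psi(N))$ is bounded below by a positive multiple of a disjunction energy of a correspondingly small neighborhood of $p$ rel some $W$ — an energy which is positive by the assumed local rigidity of $p$ with respect to $N$. The key point is to transport a disjoining contact isotopy from the $\psi(N)$ side back to the $N$ side by conjugation with the approximating contactomorphisms $\psi_m$, and to control the change in Shelukhin norm under this conjugation using the lower bound \eqref{belowcrit} on the conformal factors $f_m$.

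First I would fix the data furnished by Definition \ref{bddef}(A): a sequence $\psi_m\to\psi$ in $C^0$, a precompact $\mathcal{O}\ni p$ with $\xi|_{\bar{\mathcal{O}}}$ coorientable, contact forms $\alpha$ near $\bar{\mathcal{O}}$ and $\alpha'$ near $\psi(\bar{\mathcal{O}})$, and $(\psi_m^*\alpha')|_{\bar{\mathcal{O}}}=f_m\alpha|_{\bar{\mathcal{O}}}$ with $c:=\inf_m\inf_{\bar{\mathcal{O}}}|f_m|>0$. Using Proposition \ref{basice}(iv), local rigidity of $p$ lets me pick a small precompact $W$ with $p\in W$, $\bar W\subset\mathcal{O}$, $N\cap W$ closed in $W$, such that $e_\alpha^{W}(U,N)>0$ for every open $U$ with $p\in U$, $\bar U\subset W$. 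Now choose a still smaller precompact open $W'\ni\psi(p)$ with $\bar{W'}\subset\psi(W)$ (possible since $\psi(W)$ is open and contains $\psi(p)$), and note $\psi(N)\cap W'$ is closed in $W'$. Given an arbitrary open $U'\ni\psi(p)$ with $\bar{U'}\subset W'$, I must bound $e_{\alpha'}^{W'}(U',\psi(N))$ from below. Let $\mathbb{V}=(V_t)\in\mathcal{C}_{W'}(Y,\xi)$ disjoin $\bar{U'}$ from $\psi(N)$, with $\int_0^1\max_{W'}|\alpha'(V_t)|\,dt$ close to the infimum. For $m$ large, $\psi_m$ maps a fixed slightly-shrunk copy of $W$ onto an open set containing $\bar{W'}$, and $\psi_m^{-1}(\bar{U'})$ is a neighborhood of $p$ contained in $W$ (using $C^0$-convergence $\psi_m^{-1}\to\psi^{-1}$ on the relevant compacta, together with $\psi^{-1}(\bar{U'})\subset W$). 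Then the conjugated isotopy $\mathbb{V}^{(m)}:=(d\psi_m^{-1})(V_t\circ\psi_m)$ lies in $\mathcal{C}_{W}(Y,\xi)$ and its time-one flow $\psi_m^{-1}\circ\psi^{\mathbb{V},1}\circ\psi_m$ disjoins $\psi_m^{-1}(\bar{U'})$ from $\psi_m^{-1}(\psi(N)\cap \operatorname{im}\psi_m)\supset N\cap W$ for $m$ large (again by $C^0$-closeness of $\psi_m$ to $\psi$, using that $\psi(N)$ stays a definite distance from $\bar{U'}$ after disjunction). Since $\psi_m$ is an isocontact embedding, $\psi_m^*\alpha'=f_m\alpha$ on $\bar{\mathcal{O}}\supset\bar W$, so the contact Hamiltonian of $\mathbb{V}^{(m)}$ with respect to $\alpha$ is $(\alpha'(V_t)\circ\psi_m)\cdot (f_m\circ\text{--})$ up to the reparametrization, hence $\max_W|\alpha(V^{(m)}_t)|\le \frac{1}{c}\max_{W'}|\alpha'(V_t)|$; integrating gives $e_\alpha^{W}\!\big(\psi_m^{-1}(\bar U'),N\big)\le \frac1c\int_0^1\max_{W'}|\alpha'(V_t)|\,dt$. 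As the left side is positive by local rigidity, taking the infimum over $\mathbb{V}$ yields $e_{\alpha'}^{W'}(U',\psi(N))\ge c\, e_\alpha^{W}\!\big(\psi_m^{-1}(\bar U'),N\big)>0$, which is exactly local rigidity of $\psi(p)$ with respect to $\psi(N)$.

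The main obstacle I anticipate is the bookkeeping around the $C^0$-convergence: one must choose the nested neighborhoods $W'\subset\psi(W)$, the index $m$, and the shrinking factors in the right order so that (a) $\psi_m^{-1}(\bar{U'})$ is genuinely a neighborhood of $p$ with closure in $W$, (b) the conjugated isotopy is still supported in $W$ (so that the identity $\psi_m^*\alpha'=f_m\alpha$ can be invoked — it only holds on $\bar{\mathcal{O}}$), and (c) the disjunction on the $\psi(N)$ side really does transport to a disjunction of $N\cap W$ on the $N$ side, for which I need $\psi^{\mathbb{V},1}(\bar{U'})$ to be a positive distance from $\psi(N)$ and then $\psi_m$ close enough to $\psi$. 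Each of these is routine but the quantifiers must be threaded carefully; once they are, the norm estimate is immediate from the single inequality $\inf|f_m|\ge c>0$ and the naturality of contact Hamiltonians under isocontact embeddings recorded in Proposition \ref{basice}(ii).
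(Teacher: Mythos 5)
Your proposal follows essentially the same route as the paper's proof: push the disjoining isotopy $\mathbb{V}$ back by the approximating contactomorphisms $\psi_m$, use $C^0$-convergence to get the support of $\psi_{m*}^{-1}\mathbb{V}$ into $W$ and to transport the disjunction, and use $\psi_m^*\alpha'=f_m\alpha$ with $\inf_m\inf_{\bar{\mathcal O}}|f_m|=c>0$ to get $\max_W|\alpha(\psi_{m*}^{-1}V_t)|\le \frac{1}{c}\max_{W'}|\alpha'(V_t)|$. The substance is right, but the last step as written has a quantifier slip: the index $m$ for which your transported disjunction holds depends on $\mathbb{V}$ (the distance from $\psi^{\mathbb{V},1}(\bar U')$ to $\psi(N)\cap\psi(W)$ varies with $\mathbb{V}$), so the lower bound $c\,e_\alpha^W(\psi_m^{-1}(U'),N)$ is a $\mathbb{V}$-dependent constant, and taking the infimum over $\mathbb{V}$ does not immediately give a single positive bound for $e_{\alpha'}^{W'}(U',\psi(N))$. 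The fix is the one you half-anticipate in your "correspondingly small neighborhood" remark: fix once and for all a neighborhood $U''$ of $p$ with $\overline{U''}\subset\psi^{-1}(U')$; then $\overline{U''}\subset\psi_m^{-1}(U')$ for all large $m$, so every admissible $\mathbb{V}$ satisfies $\int_0^1\max_{W'}|\alpha'(V_t)|\,dt\ge c\,e_\alpha^W(U'',N)$, a $\mathbb{V}$-independent positive constant. The paper sidesteps this by disjoining the fixed set $\overline{\psi^{-1}(U')}$ directly, using (via Arens' theorem) that $\psi_m^{-1}\circ\psi^{\mathbb{V},1}\circ\psi_m\to\psi^{-1}\circ\psi^{\mathbb{V},1}\circ\psi$ in the compact-open topology, which gives the disjunction of that fixed set from $N$ for large $m$; with that adjustment your argument is the paper's.
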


\begin{proof} Let $\{\psi_m\}_{m=1}^{\infty}$ be a sequence of contactomorphims of $Y$, $\mathcal{O}$ a neighborhood of $p$, and $\alpha,\alpha'$ contact forms on $\bar{\mathcal{O}}$ and $\psi(\bar{\mathcal{O}})$ as in Definition \ref{bddef}(A).  
Let $W\subset \mathcal{O}$ be a precompact neighborhood of $p$ such that $N\cap W$ is closed in $W$ and, for every neighborhood $U$ of $p$ with $\bar{U}\subset W$, we have $e_{\alpha}^{W}(U,N)>0$. (As in the proof of Proposition \ref{basicr}(ii) we are free to assume that $W$ is small enough to be contained in $\mathcal{O}$ by Proposition \ref{basice}(iv).) Let $W'=\psi(W)$, and suppose that $U'$ is an arbitrary neighborhood of $\psi(p)$ such that $\overline{U'}\subset W'$. It suffices to show that $e_{\alpha'}^{W'}(U',\psi(N))>0$.  

So suppose that $\mathbb{V}=(V_t)_{t\in [0,1]}$ is a time-dependent contact vector field supported in $W'$ such that $\psi^{\mathbb{V},1}(\overline{U'})\cap \psi(N)=\varnothing$.  Thus \begin{equation}\label{psiconj}(\psi^{-1}\circ\psi^{\mathbb{V},1}\circ\psi)(\overline{\psi^{-1}(U')})\cap N=\varnothing.\end{equation} Using \cite[Theorem 4]{Ar}, the fact that $\psi_m\to \psi$ in the compact-open topology implies that likewise $\psi_{m}^{-1}\circ\psi^{\mathbb{V},1}\circ\psi_m\to \psi^{-1}\circ\psi^{\mathbb{V},1}\circ\psi$ in the compact-open topology.  So for all sufficiently large $m$, (\ref{psiconj}) implies that $(\psi_{m}^{-1}\circ\psi^{\mathbb{V},1}\circ\psi_m)(\overline{\psi^{-1}(U')})\cap N=\varnothing$, \emph{i.e.}, \[ \psi^{\psi_{m*}^{-1}\mathbb{V},1}(\overline{\psi^{-1}(U')})\cap N=\varnothing.\] Moreover since $\mathbb{V}$ has compact support within $[0,1]\times \psi(W)$, if $m$ is sufficiently large (so that $\psi_{m}\circ\psi^{-1}$ is close enough to the identity) then the support of $\psi_{m*}^{-1}\mathbb{V}$ will be contained in $[0,1]\times W$.   Hence for sufficiently large $m$ \begin{equation}\label{pullint} \int_{0}^{1}\max_W|\alpha(\psi_{m*}^{-1}V_t)|dt \geq e_{\alpha}^{W}(\psi^{-1}(U'),N).\end{equation} Now for $x\in Y$, \[ |\alpha'_{\psi_m(x)}(V_t)|=|(\psi_{m}^{*}\alpha')_x(\psi_{m*}^{-1}V_t)|=|f_m(x)\alpha_x(\psi_{m*}^{-1}V_t)|\]  where $f_m\co\bar{\mathcal{O}}\to\R$ are as in Definition \ref{bddef}. So if we write $c=\inf_m\inf_{\bar{\mathcal{O}}}|f_m|$ (which is strictly positive by (\ref{belowcrit})) we find from (\ref{pullint}) that \[ \int_{0}^{1}\max_{W'}|\alpha'(V_t)|dt\geq c\int_{0}^{1}\max_W|\alpha(\psi_{m*}^{-1}V_t)|dt \geq ce_{\alpha}^{W}(\psi^{-1}U',N)>0.\] Since $(V_t)_{t\in [0,1]}$ was arbitrary subject to its support being compactly contained in $[0,1]\times W'$ and its time-one map disjoining $\overline{U'}$ from $\psi(N)$, this suffices to show that $e_{\alpha'}^{W'}(U',\psi(N))>0$.  
\end{proof}


To provide a little more context for Definition \ref{bddef}, we provide a criterion that allows one to see that some contact homeomorphisms $\psi$ are \emph{not} bounded below near a point without checking every sequence of contactomorphisms that $C^0$-converges to $\psi$.  We apply this to some specific examples in Corollary \ref{nonbound}.

\begin{prop}\label{bdcrit} Let $(Y,\xi)$ be a $(2n+1)$-dimensional contact manifold and suppose that a contact homeomorphism $\psi\co Y\to Y$ is bounded below near $p\in Y$.  Then for a sufficiently small neighborhood $U$ of $p$ with $\bar{U}$ compact and for one and hence every choice of contact forms $\alpha$ for $\xi|_{\bar{U}}$ and $\alpha'$ for $\xi|_{\psi(\bar{U})}$ there is $\delta>0$ (depending on $\alpha,\alpha'$) such that, for every nonempty open subset $V\subset U$, we have \begin{equation}\label{vbound} \frac{\int_{\psi(V)}\alpha'\wedge (d\alpha')^{\wedge n}}{\int_V\alpha\wedge (d\alpha)^{\wedge n}}\geq \delta.\end{equation}
\end{prop}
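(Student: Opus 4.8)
The plan is to unwind Definition \ref{bddef}(A) for the point $p$ and translate the hypothesis about the conformal factors $f_m$ into a statement about the induced volume measures $\mu_\alpha$ and $\mu_{\alpha'}$, then pass to the limit using only $C^0$-convergence of the $\psi_m$ and the portmanteau-type behavior of weak convergence of measures on open sets. First I would fix the neighborhood $\mathcal{O}$ of $p$, the approximating sequence $\{\psi_m\}$, and the contact forms $\alpha,\alpha'$ supplied by the definition of ``bounded below near $p$,'' so that $(\psi_m^*\alpha')|_{\bar{\mathcal{O}}}=f_m\alpha|_{\bar{\mathcal{O}}}$ with $c:=\inf_m\inf_{\bar{\mathcal{O}}}|f_m|>0$. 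Choose any precompact open $U$ with $\bar U\subset\mathcal{O}$ and with $\psi(\bar U)$ also contained in a set on which $\alpha'$ is defined; we may also arrange $\psi_m(\bar U)$ to lie in the domain of $\alpha'$ for all large $m$. As recorded in the Remark following Theorem \ref{main}, for a contactomorphism with $\psi_m^*\alpha'=f_m\alpha$ one has, for every Borel $E\subset\mathcal{O}$,
\[
\mu_{\alpha'}(\psi_m(E))=\int_E|f_m|^{n+1}\,\alpha\wedge(d\alpha)^{\wedge n}=\int_E|f_m|^{n+1}\,d\mu_\alpha\ \geq\ c^{n+1}\mu_\alpha(E).
\]
So for every open $V\subset U$ we get $\mu_{\alpha'}(\psi_m(V))\geq c^{n+1}\mu_\alpha(V)$, uniformly in $m$. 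The remaining task is to replace $\psi_m(V)$ by $\psi(V)$ on the left-hand side in the limit.

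The key analytic step is then a lower-semicontinuity statement: if $\psi_m\to\psi$ in the compact-open topology and $V$ is open with $\bar V$ compact inside the relevant coordinate patch, then
\[
\mu_{\alpha'}(\psi(V))\ \geq\ \limsup_{m\to\infty}\mu_{\alpha'}\bigl(\psi_m(V)\bigr) \ \text{ is \emph{false in general}},
\]
so I would instead argue the correct inequality $\mu_{\alpha'}(\psi(V))\geq\liminf_m \mu_{\alpha'}(\psi_m(V))$ is also not automatic; what \emph{is} true and what I would use is: for any compact $K\subset V$, one has $\psi_m(K)\subset\psi(V)$ for all large $m$ (because $\psi_m\to\psi$ uniformly on $K$ and $\psi(K)$ is a compact subset of the open set $\psi(V)$, so a uniform neighborhood of $\psi(K)$ lies in $\psi(V)$, and $\psi_m(K)$ lies in that neighborhood eventually). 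Hence $\mu_{\alpha'}(\psi(V))\geq\mu_{\alpha'}(\psi_m(K))\geq c^{n+1}\mu_\alpha(K)$ for all large $m$, where the last step uses the displayed bound applied to the \emph{open} set $\mathrm{int}\,K$ — more cleanly, apply the bound directly: $\mu_{\alpha'}(\psi(V))\geq \mu_{\alpha'}(\psi_m(\mathrm{int}\,K))\geq c^{n+1}\mu_\alpha(\mathrm{int}\,K)$. Taking the supremum over compact $K\subset V$ and using inner regularity of $\mu_\alpha$ (it is a finite Borel measure on the precompact set $\mathcal{O}$, given by integration of a smooth form, hence Radon) yields $\mu_{\alpha'}(\psi(V))\geq c^{n+1}\mu_\alpha(V)$. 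Since $\mu_\alpha(V)=\int_V\alpha\wedge(d\alpha)^{\wedge n}$ and $\mu_{\alpha'}(\psi(V))=\int_{\psi(V)}\alpha'\wedge(d\alpha')^{\wedge n}$, this is exactly (\ref{vbound}) with $\delta=c^{n+1}$.

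Finally I would note the independence of the conclusion from the choice of contact forms: replacing $\alpha$ by $g\alpha$ and $\alpha'$ by $h\alpha'$ with $g,h$ smooth and positive on the relevant (precompact) neighborhoods multiplies numerator and denominator in (\ref{vbound}) by factors pinched between positive constants $(\inf h)^{n+1}$, $(\sup h)^{n+1}$ and $(\inf g)^{-(n+1)}$, $(\sup g)^{-(n+1)}$ respectively, so that the existence of \emph{some} $\delta>0$ is unaffected (this is the measure-theoretic shadow of Proposition \ref{basice}(i)). The main obstacle is purely the topological interchange in the middle paragraph: one cannot say $\psi(V)=\lim\psi_m(V)$ in any naive sense, and $C^0$-convergence of homeomorphisms does not control images of open sets from below — the fix is to approximate $V$ from inside by compacta and exploit that $\psi_m(K)$ is eventually trapped in $\psi(V)$, which is where uniform convergence on compacta is used. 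Everything else is bookkeeping with the volume-distortion identity for contactomorphisms already recorded in the paper.
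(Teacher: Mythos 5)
Your proof is correct, and its engine is the same as the paper's: use the conformal-factor identity $\psi_m^*(\alpha'\wedge(d\alpha')^{\wedge n})=f_m^{n+1}\,\alpha\wedge(d\alpha)^{\wedge n}$ to get the uniform lower bound $\mu_{\alpha'}(\psi_m(E))\geq c^{n+1}\mu_\alpha(E)$, and then use $C^0$-convergence to trap the $\psi_m$-image of a slightly shrunken set inside $\psi(V)$. Where you differ is in how the shrinking is implemented. The paper works in Darboux charts, takes $V$ to be (a preimage of) a product of intervals $C$, observes that $\psi_m(\tfrac12 C)\subset\psi(C)$ for large $m$, and pays the explicit scaling factor $2^{2n+1}$ coming from halving the cube, obtaining $\delta=c^{n+1}/2^{2n+1}$ on cubes and then extending to general open $V$ by a measure-theoretic approximation argument. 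You instead approximate an arbitrary open $V$ from inside by compacta $K$, use that $\psi(K)$ is a compact subset of the open set $\psi(V)$ so that $\psi_m(K)\subset\psi(V)$ eventually, and conclude by inner regularity of the Radon measure $\mu_\alpha$; this avoids Darboux coordinates altogether, treats all open $V$ at once, and yields the sharper constant $\delta=c^{n+1}$. Two small points to tidy: (1) in the supremum step you need compacta with nonempty interior exhausting $V$ in measure, i.e.\ for each compact $K\subset V$ choose $K'$ with $K\subset\mathrm{int}\,K'\subset K'\subset V$ (possible since $Y$ is locally compact), so that $\sup\mu_\alpha(\mathrm{int}\,K')\geq\sup\mu_\alpha(K)=\mu_\alpha(V)$; (2) the detour in your middle paragraph asserting and then retracting limsup/liminf inequalities is harmless but should be cut, since your actual argument never needs any semicontinuity of $E\mapsto\mu_{\alpha'}(\psi_m(E))$. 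Your closing paragraph on independence of the choice of contact forms (pinching $|g|,|h|$ on compacta) correctly supplies the ``one and hence every choice'' clause, which the paper handles implicitly by working with the standard Darboux form.
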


\begin{proof}
We may choose $U$ such that both $U$ and $\psi(U)$ have closures contained in Darboux charts around $p$ and $\psi(p)$ respectively, and take $\alpha$ and $\alpha'$ to be the respective pullbacks of the standard contact form $dz-\sum_jy_jdx_j$ on $\R^{2n+1}$ via these charts.  We also assume that $U$ is contained in a set $\mathcal{O}$ as in Definition \ref{bddef}. Since $B\mapsto \int_B\alpha\wedge (d\alpha)^{\wedge n}$ and $B\mapsto \int_{\psi(B)}\alpha'\wedge (d\alpha')^{\wedge n}$ both define Borel measures on a neighborhood of $\bar{U}$, they are each uniquely determined by their values in the special case where $B$ is the preimage under the Darboux chart of a (sufficiently small) product of intervals.  If $C$ is any such product of intervals, denote by $\frac{1}{2}C$ the product of the intervals with the same centers but half the lengths, and identify $C$ and $\frac{1}{2}C$ with their preimages under our Darboux chart around $p$.  
Since $\alpha$ is identified with the standard contact form on $\mathbb{R}^{2n+1}$ we have \begin{equation}\label{scaleint} \int_{C}\alpha\wedge (d\alpha)^{\wedge n}=2^{2n+1}\int_{\frac{1}{2}C}\alpha\wedge(d\alpha)^{\wedge n}.\end{equation} Let $\psi_m$ be as in Definition \ref{bddef}, with $\psi_{m}^{*}\alpha'=f_m\alpha$ on $\bar{\mathcal{O}}$ where $f_m\co\bar{\mathcal{O}}\to \R$ has $|f_m|\geq c$ for some $c>0$ which is indepedent of $m$.  Since $\psi_m\to\psi$ in the compact-open topology, for any product of intervals $C$ we will have $\psi^{-1}(\psi_m(\frac{1}{2}C))\subset C$ for all $m$ sufficiently large, and hence for large $m$ \begin{align*} \int_{\psi(C)}\alpha'\wedge (d\alpha')^{\wedge n}&\geq \int_{\psi_m(\frac{1}{2}C)}\alpha'\wedge (d\alpha')^{\wedge n}=\left|\int_{\frac{1}{2}C}(f_m\alpha)\wedge (d(f_m\alpha))^{\wedge n}\right|\\& \geq c^{n+1}\int_{\frac{1}{2}C}\alpha\wedge (d\alpha)^{\wedge n}=\frac{c^{n+1}}{2^{2n+1}}\int_{C}\alpha\wedge (d\alpha)^{\wedge n}.\end{align*} So (\ref{vbound}) holds with $\delta=\frac{c^{n+1}}{2^{2n+1}}$ whenever $V$ is any (preimage under our Darboux chart of a) product of open intervals, and hence it also holds with this same value of $\delta$ for arbitrary open $V\subset U$ by standard approximation arguments. 
\end{proof}

It would be interesting to know if the converse to Proposition \ref{bdcrit} also holds.  

\section{Hypertightness and local rigidity for Legendrians}\label{legrigsect}

The key result of this section that is  used in the rest of the paper is Corollary \ref{leglr}, asserting that points on arbitrary Legendrian submanifolds are locally rigid.  This is directly analogous to \cite[Corollary 2.5]{U3} for Lagrangian submanifolds of symplectic manifolds, and the proof strategy is the same: we will prove the result for a restricted class of Legendrians using pseudoholomorphic curve methods (Theorem \ref{hypermain}, analogous to \cite[Lemma 2.4]{U3}), and then exploit the fact that local rigidity is a local property to deduce the result in general via a tubular neighborhood theorem.  To identify the restricted class we introduce the following terminology, borrowed from \cite{CCD}:

\begin{dfn} \label{hyperdef}
A Legendrian submanifold $\Lambda$ of a contact manifold $(Y,\xi)$ is \textbf{hypertight} if there is a contact form $\alpha$ for $\xi$ whose Reeb vector field $R_{\alpha}$ obeys the following properties:
\begin{itemize}\item Every closed orbit of $R_{\alpha}$ is noncontractible.
\item Every Reeb chord for $\Lambda$ (\emph{i.e.}, every $\gamma\co [0,T]\to Y$ such that $\gamma'(t)=R_{\alpha}(\gamma(t))$ and $\gamma(0),\gamma(1)\in \Lambda$) represents a nontrivial element of $\pi_1(Y,\Lambda)$.
\end{itemize}
\end{dfn}

This is a rather restrictive definition, but for our purposes it is sufficient that at least one example with $\Lambda$ and $Y$ both compact exists in every dimension:

\begin{ex}\label{hyperex}
If $Y=ST^*T^{n+1}$ is the unit contangent bundle of $(n+1)$-torus, and if $\Lambda$ is either connected component 
of the unit conormal bundle of the codimension-one torus $\{1\}\times T^n$, then by using the standard contact form whose Reeb flow is the geodesic flow of the flat metric on $T^{n+1}$ we see that $\Lambda$ is a hypertight Legendrian submanifold of $Y$ (and $\dim \Lambda=n$).  (See \cite[Section 3.2]{EHS} for a somewhat more general family of examples.)
\end{ex}

Here is one of our key technical results.
\begin{theorem}\label{hypermain}
If $\Lambda$ is a closed, hypertight Legendrian submanifold of a compact contact manifold $(Y,\xi)$, with contact form $\alpha$ as in Definition \ref{hyperdef},  then for every open subset $U$ of $Y$ with $U\cap \Lambda\neq \varnothing$ we have $e_{\alpha}^{Y}(U,\Lambda)>0$  
\end{theorem}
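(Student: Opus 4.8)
The plan is to transfer the problem to the symplectization $(\R\times Y,\omega)$, $\omega=d(e^{r}\alpha)$, in which $\Lambda$ lifts to the $\R$-invariant (and noncompact) Lagrangian $\widehat\Lambda:=\R\times\Lambda$. This is the one place where pseudoholomorphic curves enter the paper: the hypertightness of $(\Lambda,\alpha)$ --- no contractible closed Reeb orbits, no nullhomotopic Reeb chords --- is exactly the input that makes the Lagrangian Floer-type machinery of \cite{RS} applicable to $\widehat\Lambda$, supplying the maximum-principle and compactness estimates that both make the relevant invariant of $\widehat\Lambda$ well defined in the noncompact symplectization and show it is nonzero. Lemma \ref{rs} then packages this as a displacement-energy statement: for a suitable compact Lagrangian $L\subset\R\times Y$ meeting $\widehat\Lambda$ there is a constant $c=c(L,\widehat\Lambda)>0$ such that no symplectomorphism of the symplectization of small enough energy (suitably interpreted, see below) can disjoin $L$ from $\widehat\Lambda$.

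Granting this, I would fix $p\in U\cap\Lambda$ and choose $L$ inside the open subset $(-\epsilon,\epsilon)\times U$ of $\R\times Y$ --- which contains $(0,p)\in\widehat\Lambda$ --- for instance a small product torus in a symplectic Darboux ball centered at $(0,p)$, or the Lagrangian lift of a small pre-Lagrangian through $p$, arranged to meet $\widehat\Lambda$ transversally. Now let $\mathbb{V}=(V_t)_{t\in[0,1]}$ be any time-dependent contact vector field supported in $[0,1]\times Y$ with $\psi^{\mathbb{V},1}(\bar U)\cap\Lambda=\varnothing$, and write $h_t=\alpha(V_t)$ for its contact Hamiltonian. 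Its symplectization lift is the Hamiltonian flow of $(r,y)\mapsto e^{r}h_t(y)$, which is $\R$-equivariant of the form $\widehat\psi^{\,t}(r,y)=(r+g_t(y),\psi^{\mathbb{V},t}(y))$; since $\pi_Y(L)\subset U\subset\bar U$, the time-one map $\widehat\psi^{\,1}$ carries $L$ into $\R\times\psi^{\mathbb{V},1}(\bar U)$, hence entirely off $\widehat\Lambda$. Thus $\widehat\psi^{\,1}$ disjoins $L$ from $\widehat\Lambda$, so Lemma \ref{rs} bounds its energy below by $c$; as that energy is controlled by $\int_0^1\max_Y|h_t|\,dt$, taking the infimum over all admissible $\mathbb{V}$ yields $e^{Y}_{\alpha}(U,\Lambda)\geq c'>0$.

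The subtle step --- and the main obstacle --- is making the comparison between $\int_0^1\max_Y|h_t|\,dt$, the quantity defining $e^{Y}_{\alpha}(U,\Lambda)$, and the notion of energy to which Lemma \ref{rs} applies. The lift $e^{r}h_t(y)$ is not compactly supported in $r$, so the naive Hofer norm on $\R\times Y$ is infinite, and truncating over a slab $\{|r|\le R\}$ costs a factor growing like $e^{R}$, where $R$ must be large enough to contain the $r$-excursion of the trajectory $t\mapsto\widehat\psi^{\,t}(L)$, a quantity not controlled by $\int_0^1\max_Y|h_t|\,dt$ alone. Overcoming this is where the $\R$-equivariance of both $\widehat\psi^{\,t}$ and $\widehat\Lambda$ must be exploited: it lets one organize the Floer-homological displacement estimate so that the energy bounded below is intrinsically the downstairs integral (a conformally weighted Hofer norm) rather than a naive Hofer norm on the full symplectization. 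This, together with the compactness results for holomorphic strips asymptotic to Reeb chords of a hypertight $\Lambda$, is precisely the technical content imported from \cite{RS} and the references cited there; once it is in place, the bookkeeping of the previous paragraph --- choice of $L$, lifting the isotopy, passing to the infimum --- is routine, and the same estimate also yields the pre-Lagrangian disjunction bound underlying Corollary \ref{htr}.
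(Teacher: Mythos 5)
Your overall architecture matches the paper's: fix $p\in U\cap\Lambda$, put a compact Lagrangian $L$ (the paper uses the Lagrangian lift $\hat L$ of a pre-Lagrangian through $p$ contained in $U$, via \cite[Lemma 4.7]{M19}) into the symplectization, lift the contact isotopy to the Hamiltonian flow of $e^rH(t,y)$, and invoke Lemma \ref{rs}. You also correctly isolate the crux: the lifted Hamiltonian is not compactly supported, and any truncation over a slab $\{|r|\le R\}$ costs a factor like $e^R$ where $R$ must dominate the $r$-excursion of the trajectory of $L$, which is governed by the conformal factors of the contact isotopy and is \emph{not} controlled by $\int_0^1\max_Y|\alpha(V_t)|\,dt$. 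The problem is that at exactly this point your argument stops: you assert that the difficulty is absorbed into the Floer-theoretic input by ``organizing the displacement estimate $\R$-equivariantly'' so that the quantity bounded below is a conformally weighted Hofer norm. That is not what Lemma \ref{rs} provides --- it is a plain displacement-energy bound, $\int_0^1\max_{\R\times Y}|K(t,\cdot)|\,dt\ge\hbar$, for \emph{compactly supported} Hamiltonians on the symplectization, and the material imported from \cite{RS} (maximum principle, breaking into Reeb orbits/chords, hypertightness ruling these out) is used only to prove that lemma, not to reweight the norm. So as written your deduction has a genuine gap: you never produce a compactly supported Hamiltonian disjoining $L$ from $\R\times\Lambda$ whose full Hofer norm is bounded by a fixed ($\mathbb{V}$-independent) multiple of $\int_0^1\max_Y|\alpha(V_t)|\,dt$.

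The missing idea in the paper is elementary but essential, and it lives in the deduction of Theorem \ref{hypermain} from Lemma \ref{rs}, not inside the lemma. After truncating $\hat H=e^rH$ to $\chi\hat H$ (with the cutoff equal to $1$ on a slab containing the trajectory of $\hat L$, so that $\sigma^1_{\chi\hat H}(\hat L)=\sigma^1_{\hat H}(\hat L)$), one replaces $\chi\hat H$ by the Hamiltonian $K$ generating $\sigma^t_K=\sigma^1_{\chi\hat H}\circ(\sigma^{1-t}_{\chi\hat H})^{-1}$, which satisfies the key identity $K(t,\sigma^t_K(x))=\chi\hat H(1-t,x)$: the value of the new Hamiltonian at the \emph{current} position of a point equals the value of the old one at its \emph{initial} position. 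One then cuts $K$ off to a small neighborhood of the moving Lagrangian $\cup_t\{t\}\times\sigma^t_K(\hat L)$, obtaining $K'$ which still satisfies $\sigma^1_{K'}(\hat L)\cap(\R\times\Lambda)=\varnothing$ and whose sup norm obeys
\[
\max_{\R\times Y}|K'(t,\cdot)|\;\le\;\max_{\hat L}|\chi\hat H(t,\cdot)|+\tfrac{\hbar}{2}\;\le\;e^{\max_L g}\max_Y|H(t,\cdot)|+\tfrac{\hbar}{2},
\]
because on $\hat L$ the coordinate $r=g(q)$ is bounded by $\max_L g$, a constant depending only on $L$ and not on the isotopy or its conformal factors. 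Feeding $K'$ into Lemma \ref{rs} then gives $e^Y_\alpha(U,\Lambda)\ge\frac{\hbar}{2}e^{-\max_L g}>0$. Without this reparametrization-and-localization step (taken from \cite{U2}), the uncontrolled factor $e^M$ you identified really does spoil the estimate, so the gap you flagged is not one you can discharge by citing \cite{RS}; it requires this additional argument (or an equivalent substitute).
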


The proof of Theorem \ref{hypermain} will occupy Section \ref{hyperproof}; let us first extract two consequences from it. Most significanty for the proof of Theorem \ref{main}, we have:

\begin{cor}\label{leglr}
If $(Y,\xi)$ is any contact manifold and $\Lambda\subset Y$ is any Legendrian submanifold then every point on $\Lambda$ is locally rigid with respect to $\Lambda$.
\end{cor}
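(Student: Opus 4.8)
The statement to prove (Corollary \ref{leglr}) says every point on every Legendrian $\Lambda$ in every contact manifold $(Y,\xi)$ is locally rigid with respect to $\Lambda$. Since local rigidity is a local property and is invariant under contactomorphisms of neighborhoods by Proposition \ref{basicr}(ii), the plan is to reduce the general case to the single hypertight example furnished by Example \ref{hyperex} via a tubular neighborhood theorem. Concretely: fix $p\in\Lambda$ and choose a Darboux-type chart around $p$ in which $\Lambda$ is modeled on a standard Legendrian. By the contact neighborhood theorem for Legendrian submanifolds, a neighborhood of $p$ in $(Y,\xi)$ with the trace of $\Lambda$ is contactomorphic to a neighborhood of a point in the $1$-jet bundle $J^1(\Lambda)$ (or equivalently $ST^*M$ for a suitable $M$) with its zero section, and this local model can be matched to a neighborhood of a point of the hypertight Legendrian $\Lambda_0 = $ (a component of the unit conormal of $\{1\}\times T^n$) inside $Y_0 = ST^*T^{n+1}$ from Example \ref{hyperex}. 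So there is a contactomorphism $\phi$ from a neighborhood $V$ of $p$ in $Y$ to a neighborhood $\hat V$ of a point $\hat p\in\Lambda_0$ in $Y_0$ carrying $\Lambda\cap V$ to $\Lambda_0\cap\hat V$.

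By Proposition \ref{basicr}(ii), it then suffices to show that $\hat p$ is locally rigid with respect to $\Lambda_0$ in $Y_0$. This is exactly where Theorem \ref{hypermain} enters: with $\alpha_0$ the hypertight contact form on $Y_0$ (whose Reeb flow is the flat geodesic flow), Theorem \ref{hypermain} gives $e^{Y_0}_{\alpha_0}(U,\Lambda_0) > 0$ for every open $U$ meeting $\Lambda_0$. To conclude local rigidity at $\hat p$ I would choose a precompact neighborhood $W$ of $\hat p$ in $Y_0$ with $\Lambda_0\cap W$ closed in $W$ (automatic since $\Lambda_0$ is closed in $Y_0$), pick a contact form $\alpha$ for $\xi|_{\bar W}$ extending continuously to $\bar W$, and for any neighborhood $U$ of $\hat p$ with $\bar U\subset W$ observe: by Proposition \ref{basice}(iv), $e^{Y_0}_{\alpha_0}(U,\Lambda_0) \le e^{W}_{\alpha_0|_W}(U,\Lambda_0)$, and by Proposition \ref{basice}(i), $e^{W}_{\alpha}(U,\Lambda_0) \ge (\inf_W|f|)\, e^{W}_{\alpha_0|_W}(U,\Lambda_0)$ where $\alpha_0|_W = f\alpha$ on the compact set $\bar W$, so $\inf_W|f| > 0$. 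Chaining these, $e^{W}_{\alpha}(U,\Lambda_0) \ge (\inf_W|f|)\, e^{Y_0}_{\alpha_0}(U,\Lambda_0) > 0$, which is precisely the condition in Definition \ref{rigiddef}.

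**Main obstacle.** The genuinely substantive input is Theorem \ref{hypermain}, whose proof (deferred to Section \ref{hyperproof} and resting on Lemma \ref{rs} and pseudoholomorphic-curve machinery from \cite{RS}) is assumed here; granting it, the only real point requiring care is the \emph{reduction step}: verifying that an arbitrary Legendrian $\Lambda$, near an arbitrary point $p$, is locally contactomorphic to the specific hypertight pair $(Y_0,\Lambda_0)$ of Example \ref{hyperex}. This follows from the Legendrian neighborhood theorem together with the fact that \emph{any} Legendrian looks locally like the zero section of a $1$-jet space, combined with the corresponding local model for $\Lambda_0\subset ST^*T^{n+1}$ near any of its points — so no global hypothesis on $\Lambda$ (compactness, closedness as a subset) is needed, matching the statement's generality. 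The only bookkeeping subtlety is arranging the dimension count: a Legendrian in a $(2n+1)$-manifold has dimension $n$, so one applies Example \ref{hyperex} with the torus $T^{n+1}$, giving $\dim\Lambda_0 = n$ as required; and one must shrink $V$ so that $\xi|_{\bar V}$ is coorientable, which is harmless and already built into the flexibility allowed by Proposition \ref{basicr}(ii).
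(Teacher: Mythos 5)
Your proposal is correct and follows essentially the same route as the paper: reduce to the hypertight pair $(ST^*T^{n+1},\Lambda_0)$ of Example \ref{hyperex} via the Legendrian neighborhood theorem, apply Theorem \ref{hypermain} there, and transfer local rigidity back through Proposition \ref{basicr}(ii), with Proposition \ref{basice}(i),(iv) handling the choice of $W$ and of contact form. The only difference is cosmetic: since $Y_0$ is compact one may simply take $W=Y_0$ with the hypertight form $\alpha_0$, so your chaining of inequalities, while valid, is not strictly needed.
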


\begin{proof}  Let $\Lambda'\subset Y'$ be a hypertight Legendrian submanifold of some compact contact manifold $(Y',\xi')$ with $\dim Y'=\dim Y$ (as exists by Example \ref{hyperex}).  

If $p\in \Lambda$ then the Legendrian neighborhood theorem (see \cite[Proposition 43.18]{KM} for a version which does not require compactness of $\Lambda$) gives a contactomorphism $\Psi$ from a neighborhood of $W$ of $p$ in $Y$  to an open set $W'\subset Y'$, such that $\Psi(\Lambda\cap W)=\Lambda'\cap W'$.  It follows immediately from Theorem \ref{hypermain}  that $\Psi(p)$ is locally rigid with respect to $\Lambda'$, and then Proposition \ref{basicr} (ii) applied with $\phi=\Psi^{-1}$ shows that $p$ is locally rigid with respect to $\Lambda$.
\end{proof}

Our other consequence of Theorem \ref{hypermain} concerns the contact version of the Chekanov-Hofer metric on the orbit of a submanifold under the identity component of the contactomorphism group, as considered in \cite{RZ}.  Given a smooth manifold $Y$ with a global contact form $\alpha$ and $\xi=\ker\alpha$, following \cite{She} one defines a norm $\|\cdot\|_{\alpha}$ on the identity component $\mathrm{Cont}_0(Y,\xi)$ of the contactomorphism group by \[ \|\psi\|_{\alpha}=\inf\left\{\left.\int_{0}^{1}\max_Y|\alpha(V_t)|dt\right|\mathbb{V}=(V_t)_{t\in [0,1]}\in\mathcal{C}_Y(Y,\xi),\,\psi^{\mathbb{V},1}=\psi\right\} \] (with notation as in Section \ref{rigsect}).  If $N\subset Y$ is a closed subset, one can then let $\mathcal{L}(N)=\{\psi(N)|\psi\in \mathrm{Cont}_0(Y,\xi)\}$ and, analogously to \cite{Che}, define a pseudometric $\delta_{\alpha}$ on $\mathcal{L}(N)$ by $\delta_{\alpha}(N_1,N_2)=\inf\{\|\psi\|_{\alpha}|\psi(N_1)=N_2\}$.  \cite[Conjecture 1.10]{RZ} states that $\delta_{\alpha}$ is non-degenerate when $N$ is a closed connected Legendrian submanifold.  Theorem \ref{hypermain} quickly implies a special case:

\begin{cor}\label{htr}
Let $\Lambda$ be a closed hypertight Legendrian submanifold of a compact contact manifold $(Y,\xi)$, and let $\xi=\ker\alpha$.  Then the Shelukhin-Chekanov-Hofer pseudometric $\delta_{\alpha}$ is non-degenerate on $\mathcal{L}(\Lambda)$.
\end{cor}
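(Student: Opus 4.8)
The plan is to obtain non-degeneracy of $\delta_\alpha$ as a formal consequence of the globally-supported disjunction-energy lower bound in Theorem \ref{hypermain}. Recall that we must show $\delta_\alpha(N_1,N_2)>0$ whenever $N_1,N_2\in\mathcal{L}(\Lambda)$ are distinct. The first step is to upgrade Theorem \ref{hypermain} from $\Lambda$ to an arbitrary member of its orbit. Write $N_1=\phi(\Lambda)$ with $\phi\in\mathrm{Cont}_0(Y,\xi)$. Applying Proposition \ref{basice}(ii) to the contactomorphism $\phi$ (with $W=Y$ and with the contact form $\alpha$ on the target) identifies $e_\alpha^Y(V,N_1)$ with $e_{\phi^*\alpha}^Y(\phi^{-1}(V),\Lambda)$ for every open $V\subset Y$; since $\phi^*\alpha=g\alpha$ for a smooth $g\co Y\to(0,\infty)$ with $\inf_Y g>0$ (here I use compactness of $Y$), Proposition \ref{basice}(i) gives $e_{\phi^*\alpha}^Y(\phi^{-1}(V),\Lambda)\geq(\inf_Y g)\,e_\alpha^Y(\phi^{-1}(V),\Lambda)$. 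As $\phi^{-1}(V)$ is open and meets $\Lambda$ whenever $V$ meets $N_1$, Theorem \ref{hypermain} then yields
\[ e_\alpha^Y(V,N_1)>0\qquad\text{for every open }V\subset Y\text{ with }V\cap N_1\neq\varnothing. \]

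The second step uses distinctness of $N_1$ and $N_2$. Since $N_1\neq N_2$ as subsets of $Y$, after possibly interchanging them---which is legitimate since $\delta_\alpha$ is symmetric, as one sees by checking that reversing a generating isotopy $(V_t)$ of $\psi$ produces one for $\psi^{-1}$ realizing the same value of $\int_0^1\max_Y|\alpha(V_t)|\,dt$, so that $\|\psi\|_\alpha=\|\psi^{-1}\|_\alpha$---we may pick a point $p\in N_1\setminus N_2$. As $N_2$ is closed in $Y$, choose an open neighborhood $U$ of $p$ with $\overline{U}\cap N_2=\varnothing$; since $p\in U\cap N_1$, the first step gives $e_\alpha^Y(U,N_1)>0$. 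Now let $\psi\in\mathrm{Cont}_0(Y,\xi)$ be arbitrary with $\psi(N_1)=N_2$, so $\psi^{-1}(N_2)=N_1$. Then $\psi^{-1}(\overline{U})\cap N_1=\psi^{-1}(\overline{U})\cap\psi^{-1}(N_2)=\psi^{-1}\big(\overline{U}\cap N_2\big)=\varnothing$, so every contact isotopy $(V_t)_{t\in[0,1]}$ supported in $Y$ whose time-one map is $\psi^{-1}$ is a competitor in the definition of $e_\alpha^Y(U,N_1)$; hence $\|\psi\|_\alpha=\|\psi^{-1}\|_\alpha\geq e_\alpha^Y(U,N_1)$. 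Taking the infimum over all such $\psi$ shows $\delta_\alpha(N_1,N_2)\geq e_\alpha^Y(U,N_1)>0$, as desired.

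I do not anticipate a genuine obstacle: all of the analysis is hidden in Theorem \ref{hypermain}, and the two steps above are essentially bookkeeping. The only delicate point is that members of $\mathcal{L}(\Lambda)$ other than $\Lambda$ need not be hypertight with respect to $\alpha$ itself, so one cannot apply Theorem \ref{hypermain} to them directly; transporting through $\phi$ as above circumvents this, at the cost of a conformal factor that is controlled because $Y$ is compact. I would also note that Corollary \ref{leglr} by itself is not enough here, since it only provides positivity of $e_\alpha^W$ for \emph{small} $W$, whereas the argument requires positivity of the a priori smaller quantity $e_\alpha^Y$, with disjoining isotopies allowed to be supported anywhere in $Y$.
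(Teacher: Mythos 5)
Your argument is correct, and it reaches the conclusion by a slightly different route than the paper, so let me compare. The paper first invokes \cite[Proposition 5.1(4)]{RZ} to reduce non-degeneracy to showing $\delta_{\alpha}(\Lambda,\Lambda')>0$ for $\Lambda'\neq\Lambda$, and then, for $\psi$ with $\psi(\Lambda)=\Lambda'$ and a fixed $\phi$ with $\phi(\Lambda)=\Lambda'$, conjugates: $\phi^{-1}\psi\phi$ disjoins a suitable $\bar{U}$ from $\Lambda$, so $\|\phi^{-1}\psi\phi\|_{\alpha}\geq e^{Y}_{\alpha}(U,\Lambda)$, and the conformal factor of $\phi$ is paid through Shelukhin's conjugation identity (his Theorem A(iv)) together with his Lemma 10 comparing norms for different contact forms. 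You instead transport the energy bound rather than the contactomorphism: using Proposition \ref{basice}(ii) and then (i), you show $e^{Y}_{\alpha}(V,N_1)\geq(\inf_Y g)\,e^{Y}_{\alpha}(\phi^{-1}(V),\Lambda)>0$ for every member $N_1=\phi(\Lambda)$ of the orbit, and then you only need the elementary inversion-symmetry $\|\psi\|_{\alpha}=\|\psi^{-1}\|_{\alpha}$ (which indeed holds: the reversed path $t\mapsto\psi^{\mathbb{V},1-t}\circ(\psi^{\mathbb{V},1})^{-1}$ is generated by the contact vector fields $-V_{1-t}$, with the same value of $\int_0^1\max_Y|\alpha(\cdot)|\,dt$; note this is the time-reversed path, not the path $t\mapsto(\psi^{\mathbb{V},t})^{-1}$, whose Hamiltonian would pick up conformal factors). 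This handles arbitrary distinct pairs $N_1,N_2$ directly, so you avoid citing both \cite[Proposition 5.1(4)]{RZ} and Shelukhin's conjugation machinery, at the cost of the orbit-wide upgrade of Theorem \ref{hypermain}; in both proofs the conformal factor of the fixed $\phi$ enters exactly once as a multiplicative constant, and all the analysis sits in Theorem \ref{hypermain}. Two very minor points: since the images of the closed manifold $\Lambda$ under diffeomorphisms are closed submanifolds with the same number of components, $N_1\neq N_2$ automatically gives $N_1\setminus N_2\neq\varnothing$, so the interchange via symmetry is not even needed; and Theorem \ref{hypermain} is stated for the hypertight contact form, so if the $\alpha$ in the corollary is an arbitrary form for $\xi$ one should insert one more application of Proposition \ref{basice}(i) on the compact $Y$ -- a step the paper's own proof also leaves implicit. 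Your closing observation that Corollary \ref{leglr} alone would not suffice (since it only gives positivity of $e^{W}_{\alpha}$ for small $W$, not of $e^{Y}_{\alpha}$) is exactly right.
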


\begin{proof}
By \cite[Proposition 5.1(4)]{RZ} it suffices to check that $\delta_{\alpha}(\Lambda,\Lambda')>0$ whenever $\Lambda'\in\mathcal{L}(\Lambda)$ with $\Lambda'\neq\Lambda$.  Fix such an element $\Lambda'$ and choose $\phi\in\mathrm{Cont}_0(Y,\xi)$ with $\phi(\Lambda)=\Lambda'$; since $\Lambda$ is a closed manifold the fact that $\Lambda'\neq \Lambda$ implies that $\Lambda'\nsubset \Lambda$, so there is an open subset $U$ of $Y$ such that $U\cap\Lambda\neq\varnothing$ and $\phi(\bar{U})\cap \Lambda=\varnothing$.  Let $f\co Y\to (0,\infty)$ be the smooth function such that $\phi^*\alpha=f\alpha$.  We will show that \begin{equation}\label{rzclaim} \delta_{\alpha}(\Lambda,\Lambda')\geq (\min_Yf)e^{Y}_{\alpha}(U,\Lambda),\end{equation} which will suffice to prove the result since Theorem \ref{hypermain} shows that $e^{Y}_{\alpha}(U,\Lambda)>0$.

So let $\psi\in \mathrm{Cont}_{0}(Y,\xi)$ be arbitrary subject to the condition that $\psi(\Lambda)=\Lambda'$.  Since also $\phi(\Lambda)=\Lambda'$ we have $\phi^{-1}\psi(\Lambda)=\Lambda$, and since $\phi(\bar{U})\cap\Lambda=\varnothing$ we obtain \[ \phi^{-1}\psi\phi(\bar{U})\cap \Lambda=\phi^{-1}\psi(\phi(\bar{U})\cap\Lambda)=\varnothing.\]  Thus $\|\phi^{-1}\psi\phi\|_{\alpha}\geq e^{Y}_{\alpha}(U,\Lambda)$.  So we obtain \begin{align*} (\min_Yf)e^{Y}_{\alpha}(U,\Lambda)&\leq (\min_Yf)\|\phi^{-1}\psi\phi\|_{\alpha}=(\min_Yf)\|\psi\|_{\phi^{-1*}\alpha}
\\ &\leq \|\psi\|_{\alpha} \end{align*} where the equality uses \cite[Theorem A(iv)]{She} and the last inequality uses \cite[Lemma 10]{She}.  Since this holds for all $\psi$ with $\psi(\Lambda)=\Lambda'$ we have proven (\ref{rzclaim}).
\end{proof}

\subsection{Proof of Theorem \ref{hypermain}}\label{hyperproof}  What we will in fact show is that there is a positive lower bound on $\int_{0}^{1}|\alpha(V_t)|dt$ for all time-dependent contact vector fields $(V_t)_{t\in [0,1]}$ whose time-one maps disjoin a given compact \emph{pre-Lagrangian} submanifold $L$ from our hypertight Legendrian $\Lambda$ if $L$ and $\Lambda$ have nonempty transverse intersection; this will imply that $e_{\alpha}^{Y}(U,\Lambda)>0$ by choosing $L$ to be contained in $U$.  Recall here that, continuing to write $\dim Y=2n+1$, a pre-Lagrangian submanifold $L$ of $(Y,\xi)$ is an $(n+1)$-dimensional submanifold that is transverse to $\xi$ such that some contact form $\beta$ for $\xi$ obeys $d\beta|_{L}=0$.  Perhaps after reversing the sign of $\beta$ we can write $\beta=e^g\alpha$ for some $g\co Y\to\R$, and then in the symplectization $(\R\times Y,d(e^r\alpha))$ the pre-Lagrangian $L\subset Y$ will lift to a Lagrangian submanifold $\hat{L}=\{(g(q),q)|q\in L\}$.

The key lemma is a lower bound on the Hofer norm of a Hamiltonian diffeomorphism of $\R\times Y$ that is required to disjoin a general compact Lagrangian submanifold $P\subset \R\times Y$ from the Lagrangian submanifold $\R\times \Lambda$. (Eventually we will take $P$ to be a lift $\hat{L}$ of the pre-Lagrangian $L$ mentioned in the previous paragraph.)  If $K\co [0,1]\times (\R\times Y)\to \R$ is a smooth function  we write its Hamiltonian vector field (with respect to the symplectic form $d(e^r\alpha)$) at time $t$ as $Z_{K_t}$, so $d(e^r\alpha)(\cdot,Z_{K_t})=d(K(t,\cdot))$, and we write $\sigma_{K}^{t}$ for the time-$t$ flow of this time-dependent vector field (assuming that this flow exists).

\begin{lemma}\label{rs}  Let $\Lambda$ be a hypertight Legendrian submanifold of a contact manifold $(Y,\xi)$, and let $\alpha\in \Omega^1(Y)$ be as in Definition \ref{hyperdef}.  Also let $P$ be a \textbf{compact} Lagrangian submanifold of the symplectization $(\R\times Y,d(e^r\alpha))$ whose intersection with $\R\times \Lambda$ is nonempty and transverse.  Then there is $\hbar>0$, depending only on $\Lambda,\alpha,P$, such that, for any compactly supported Hamiltonian $K\co [0,1]\times (\R\times Y)\to \R$ with $\sigma_{K}^{1}(P)\cap (\R\times \Lambda)=\varnothing$, we have $\int_{0}^{1}\max_{\R\times Y}|K(t,\cdot)|dt\geq \hbar$.
\end{lemma}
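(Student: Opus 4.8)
The plan is to deduce this from a Floer-theoretic lower bound: the hypertightness hypothesis guarantees that the Lagrangian $\R\times\Lambda$ has a well-defined (wrapped/symplectic) Floer theory in the symplectization, and one uses a displacement-energy type estimate to bound $\int_0^1\max|K_t|\,dt$ from below by the smallest ``area'' of a relevant Floer/holomorphic strip. Concretely, I would first reduce to a cleaner situation. Since $P$ is compact and $K$ compactly supported, the whole story takes place in a compact region $[-R,R]\times Y$ of the symplectization for suitable $R$; I would want to modify $\alpha$ and $K$ outside this region (or conformally rescale the symplectization at the $r$-ends, as in \cite{RS}) so that $\R\times Y$ is replaced by a Liouville-type completion in which $\R\times\Lambda$ is an exact, cylindrical-at-infinity Lagrangian and the Reeb/chord hypotheses of hypertightness translate into the absence of low-action Floer generators or holomorphic disks with boundary on $\R\times\Lambda$. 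This is where Definition \ref{hyperdef} does its work: noncontractibility of closed Reeb orbits and nontriviality in $\pi_1(Y,\Lambda)$ of Reeb chords are precisely the conditions ensuring that $\R\times\Lambda$ bounds no holomorphic disk and that $HF(\R\times\Lambda,\R\times\Lambda)$ (or the analogous invariant paired with $P$) is nonzero, so that $\R\times\Lambda$ cannot be displaced by a ``small'' Hamiltonian.

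Next I would invoke the appropriate displacement-energy inequality. The clean statement I want is: if $L_0$ is a closed Lagrangian with $HF(L_0,L_0)\neq 0$ (here $L_0=\R\times\Lambda$, after the modification), and $L_1$ is any closed Lagrangian intersecting $L_0$ transversally and nonemptily (here $L_1=P$), then any compactly supported Hamiltonian $K$ with $\sigma_K^1(L_1)\cap L_0=\varnothing$ satisfies $\int_0^1\max|K_t|\,dt \geq \hbar$ for some $\hbar=\hbar(L_0,L_1)>0$; in fact $\hbar$ can be taken to be a suitable minimal energy/area of a holomorphic strip or disk on $L_0\cup L_1$. This is exactly the kind of estimate assembled in \cite{RS} and the references therein (Chekanov's theorem, its refinements, and the energy-capacity inequalities), applied in the noncompact symplectization setting using the hypertightness-induced taming at infinity; I would cite those results rather than reprove them. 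The constant $\hbar$ depends only on $\Lambda$, $\alpha$ (through $L_0$ and the chosen almost complex structure / geometry at infinity) and $P$, as required.

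The main obstacle I expect is the noncompactness of $\R\times Y$ and the correct handling of the ends of the symplectization: one must ensure that holomorphic curves contributing to the relevant Floer invariant (or to the energy estimate) satisfy a maximum principle / $C^0$-estimate keeping them in a compact region, and that no curves ``escape'' to $r=\pm\infty$. This is precisely the technical heart of \cite{RS}, and is exactly where hypertightness (ruling out the closed Reeb orbits and trivial Reeb chords that would otherwise produce escaping curves or low-action generators) is essential; a secondary technical point is arranging compatibility of the chosen almost complex structures with both the cylindrical structure at infinity and the need for transversality, and verifying that the modification of the data outside $[-R,R]\times Y$ does not change the displaceability conclusion (it does not, since $K$ and $P$ are supported/contained well inside). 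Once these analytic foundations are quoted from \cite{RS}, the lemma follows formally: the hypothesis $\sigma_K^1(P)\cap(\R\times\Lambda)=\varnothing$ forces $\int_0^1\max|K_t|\,dt$ to exceed the minimal strip energy $\hbar$, which is positive because that energy is realized by an actual holomorphic curve whose existence is guaranteed by the nonvanishing of the Floer invariant.
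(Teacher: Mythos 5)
Your overall target---an energy--capacity inequality whose constant $\hbar$ is a minimal holomorphic disk/strip energy, with hypertightness used to confine curves in the noncompact symplectization and with the hard analysis quoted from \cite{RS}---is indeed the shape of the paper's argument. But the load-bearing step you propose is not available. You want to deduce the bound from nonvanishing of $HF(\R\times\Lambda,\R\times\Lambda)$, or of ``the analogous invariant paired with $P$,'' via a displacement-energy inequality conditioned on that nonvanishing. The Lagrangian $P$ is only assumed compact: it is not exact, monotone, or otherwise unobstructed, so a Floer homology $HF(P,\R\times\Lambda)$ need not be defined, and even when defined it can vanish although the intersection is nonempty and transverse; moreover hypertightness of $\Lambda$ gives no such nonvanishing. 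What hypertightness actually buys (and what the paper extracts from it, via \cite[Props. 4.6--4.8]{RS} and \cite{AFM}) is a $C^0$-confinement statement (Claim \ref{cpctclaim}): solutions cannot escape to $r\to-\infty$, because an escaping sequence would produce a trivial cylinder over a closed Reeb orbit or a Reeb chord that is homotopically trivial (it arises from loops/arcs in the simply connected domain $\R\times(0,1]$), contradicting Definition \ref{hyperdef}; the end $r\to+\infty$ is handled by the maximum principle since $K$ and $P$ sit below a finite level. Given that confinement, no Floer group is needed at all: the paper runs a Chekanov/Oh/Usher-style low-energy continuation argument (following \cite[Theorem 4.9]{U1}), i.e.\ a parametrized moduli space of perturbed strips anchored at a single fixed intersection point $p$ and contractible in the path space; at parameter $c=0$ it is one transversally cut out point, and if $\int_0^1\max|K(t,\cdot)|\,dt<\hbar$ the energy estimate forbids bubbling and breaking, so a cobordism argument produces a solution at $c=1$ and hence, after letting the cutoff parameter $R\to\infty$, a point of $\sigma_K^1(P)\cap(\R\times\Lambda)$. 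Note also that the positivity of $\hbar$ comes from the monotonicity lemma applied in a compact region containing the compact Lagrangian $P$ (exactness kills spheres and disks on $\R\times\Lambda$), not from a curve ``realized'' by a nonvanishing Floer invariant; your last sentence inverts that logic.

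A secondary problem is the proposed reduction ``modify $\alpha$ and $K$ outside a compact region so that $\R\times\Lambda$ becomes an exact, cylindrical-at-infinity Lagrangian'': it already is exact and cylindrical, and the genuine difficulty---the concave end, where the noncompact Lagrangian cannot be cut off or capped without destroying the boundary condition and where no maximum principle applies---is not removed by any modification supported away from a compact set. The paper makes no modification at the ends; it works directly in $(\R\times Y,d(e^r\alpha))$ and proves the compactness claim. So your sketch becomes a proof only after replacing the Floer-nonvanishing mechanism by the anchored continuation argument plus Claim \ref{cpctclaim}; as written, the central ``clean statement'' you intend to cite is not a theorem in this generality.
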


\begin{proof}
Our argument will follow the proof of \cite[Theorem 4.9]{U1} which establishes a similar statement for pairs of compact Lagrangian submanifolds of geometrically bounded symplectic manifolds. See also \cite{Oh} (in the symplectic case) and \cite{Ak} (in the contact case) for earlier related work. As we will see, the assumption that $\Lambda$ is hypertight allows the proof to go through even though symplectizations are not geometrically bounded.

Write $\omega=d(e^r\alpha)$ for the usual symplectic form on the symplectization.  Given a suitably generic family $J=\{J_t\}_{t\in [0,1]}$ of $\omega$-compatible cylindrical almost complex structures on $\R\times Y$, our value $\hbar$ can be taken equal to one-half of the minimum of:\footnote{Because $\omega=d(e^r\alpha)$ is exact there are no nonconstant $J_t$-holomorphic spheres in $\R\times Y$, and because $e^r\alpha$ vanishes on $\R\times \Lambda$ there are no nonconstant $J_1$-holomorphic disks with boundary on $\R\times\Lambda$; thus our list here is shorter than the analogous one in \cite[Proof of Theorem 4.9]{U1}.} 
\begin{itemize} \item the energy of any nonconstant $J_0$-holomorphic disk with boundary on $P$; and   
\item the energy of any nonconstant solution $\tilde{u}\co [0,1]\times \R\to \R\times Y$ to the equation $\frac{\partial \tilde{u}}{\partial s}+J_t\frac{\partial \tilde{u}}{\partial t}=0$ with $\tilde{u}(s,0)\in P$ and $\tilde{u}(s,1)\in \R\times \Lambda$ for all $s\in \R$.
\end{itemize}

The fact that the energies of such objects are bounded away from zero follows from the compactness of $P$, as one can apply a monotonicity lemma such as \cite[Proposition 4.7.2]{Si} in a compact region whose interior contains $P$.

Fix $p\in P\cap (\R\times\Lambda)$.  Let $\mathcal{P}(P,\R\times \Lambda)$ denote the space of continuous paths $\gamma\co [0,1]\to\R\times Y$ with $\gamma(0)\in P$ and $\gamma(1)\in \R\times \Lambda$; this is naturally a topological space, and we treat the constant path $\underline{p}$ at $p$ as the basepoint of this space.  We will be considering maps $\tilde{u}\co \R\times[0,1]\to \R\times Y$ with $\tilde{u}(\R\times\{0\})\subset P,\tilde{u}(\R\times\{1\})\subset \R\times \Lambda$ and such that $\tilde{u}(s,t)\to p$ uniformly in $t$ as $s\to \pm\infty$.  Such a map extends continuously to a map $\bar{u}\co [-\infty,\infty]\times[0,1]\to \R\times Y$ with $\bar{u}(\pm\infty,\cdot)=\underline{p}$, and thus determines a loop $[-\infty,\infty]\to \mathcal{P}(P,\R\times\Lambda)$ based at $\underline{p}$ (sending $s$ to $\bar{u}(s,\cdot)$).  For concision we will say that the original map $\tilde{u}\co \R\times[0,1]\to\R\times Y$ is \emph{contractible} if this associated loop in $\mathcal{P}(P,\R\times\Lambda)$ is trivial in $\pi_1$.
If $\tilde{u} $ is contractible (so in particular $\tilde{u}|_{\R\times\{0\}}$ and $\tilde{u}|_{\R\times\{1\}}$ compactify to contractible loops in the Lagrangian submanifolds $P$ and $\R\times\Lambda$, respectively) then $\int_{\R\times[0,1]}\tilde{u}^*\omega=0$ by Stokes' theorem.

For any $R>0$ let $\beta_R\co \R\to [0,1]$ be a smooth function such that $\beta_R(s)=0$ for $|s|\geq R+1$, $\beta_R(s)=1$ for $|s|\leq R$, and $s\beta'_R(s)\leq 0$ for all $s\in \R$.  Now given $R>0$, $c\in [0,1]$, and a compactly supported smooth $K\co [0,1]\times \R\times Y\to \R$ consider the moduli space $\mathcal{M}_{c,R}^{K}$ of solutions $\tilde{u}\co \R\times[0,1]\to \R\times Y$ to the equation \begin{equation}\label{floereq} \frac{\partial \tilde{u}}{\partial s}+J_t\left(\frac{\partial \tilde{u}}{\partial t}-c\beta_RZ_K\right)=0 \end{equation}  with $\tilde{u}(\R\times\{0\})\subset P$ and $\tilde{u}(\R\times\{1\})\subset \R\times \Lambda$, such that $\tilde{u}(s,t)\to p$ uniformly in $t$ as $s\to \pm\infty$, and such that $\tilde{u}$ is contractible in the sense of the previous paragraph.  Note that such a map $\tilde{u}$ is $J$-holomorphic outside $[-R-1,R+1]\times [0,1]$, and is also $J$-holomorphic outside the preimage of the (compact) union of the supports of the functions  $K(t,\cdot)\co \R\times Y\to \R$.

As in \cite{U1}, standard estimates show that we have, for $\tilde{u}\in \mathcal{M}_{c,R}^{K}$, \begin{align}\label{enest} \nonumber E(\tilde{u} ):=\int_{\R\times [0,1]}\left|\frac{\partial \tilde{u} }{\partial s}\right|_{J_t}^{2}dsdt &\leq \int_{\R\times [0,1]}\tilde{u}^{*}\omega+c\int_{0}^{1}\left(\max K(t,\cdot)-\min K(t,\cdot)\right)dt \\ &\leq 2c\int_{0}^{1}\max|K(t,\cdot)|dt \end{align} where the last inequality follows from $\tilde{u}$ being contractible.    

The key claim is now the following, which depends on the hypertightness assumption on $\Lambda$.

\begin{claim}\label{cpctclaim}
For fixed $K$, there is a compact subset of $\R\times Y$ which contains the image of every $\tilde{u}\in \cup_{c,R}\mathcal{M}_{c,R}^{K}$.
\end{claim}

If one assumes Claim \ref{cpctclaim} the proof of the lemma can be completed just as in \cite[Theorem 4.9]{U1}: for fixed $R$, the space $\mathcal{M}_{0,R}^{K}$ consists of a single transversely-cut-out point, and if $\int_{0}^{1}|K(t,\cdot)|dt<\hbar$ the parametrized moduli space $\cup_{c\in [0,1]}\mathcal{M}_{c,R}^{K}$ will be compact since the energy bound (\ref{enest}) prevents any bubbling or trajectory breaking and Claim \ref{cpctclaim} keeps all solutions in a compact set.  A cobordism argument then shows $\mathcal{M}_{1,R}^{K}\neq \varnothing$ for all $R>0$, and then using Morrey's inequality and the Arzel\`a-Ascoli theorem as in \cite{U1} (with another appeal to Claim \ref{cpctclaim} to ensure boundedness) one obtains from a sequence of elements of $\mathcal{M}_{1,R_m}^{K}$ as $m\to \infty$ a path $\gamma\co[0,1]\to \R\times Y$ with $\gamma(0)\in P,\gamma(1)\in \R\times \Lambda$, and $\gamma'(t)=Z_K(\gamma(t))$.  We will then have an element $\gamma(1)\in \sigma_{K}^{1}(P)\cap (\R\times \Lambda)$ whenever $\int_{0}^{1}\max|K(t,\cdot)|dt<\hbar$, thus proving the lemma modulo Claim \ref{cpctclaim}.

Claim \ref{cpctclaim} follows by arguments like those used in \cite{EHS},\cite{Ak},\cite{RS}; indeed the only difference between our situation and that of \cite[Section 4]{RS} is that we are working with one compact Lagrangian and one cylindrical Lagrangian instead of two cylindrical Lagrangians. Suppose for contradiction that we had $\tilde{u}_m\in \mathcal{M}_{c_m,R_m}^{K}$ (for all $m\in \Z_+$) such that there does not exist any fixed compact set containing the image of every $\tilde{u}_m$.    Let $d>0$ be such that the support of $K$ is contained in $[0,1]\times (-d,d)\times Y$, and also $P\subset (-d,d)\times Y$. For convenience we assume also that $-d$ is a common regular value of all of the $\R$-components of the maps $\tilde{u}_m$ (of course the set of such values is dense by Sard's theorem).  Since $\tilde{u}_m(s,t)\to p\in (-d,d)\times Y$ as $s\to\pm\infty$ and since the Hamiltonian term in (\ref{floereq}) vanishes outside $(-d,d)\times Y$, it follows from the maximum principle (as in \cite[Lemma 4.4]{RS}) that the $\tilde{u}_m$ must all have image contained in $(-\infty,d)\times Y$.  Thus our contradiction assumption implies that, after passing to a subsequence, $\tilde{u}_m$ has image intersecting $(-\infty,a_m)\times Y$ for some sequence $a_m\to-\infty$. 

Write $Z_m=\tilde{u}_{m}^{-1}((-\infty,-d]\times Y)$, and $u_m=\tilde{u}_m|_{Z_m}$.  This is a compact subsurface with boundary and (possibly) corners of $\R\times (0,1]$; one part of the boundary (namely $Z_m\cap (\R\times\{1\})$) maps to $(-\infty,-d]\times \Lambda$ while the other part maps to $\{-d\}\times Y$.  The maps $u_m$ satisfy the hypotheses of \cite[Proposition 4.6]{RS} which gives constraints on the topological behavior of the various subsurfaces $u_{m}^{-1}([R,S]\times Y)$ for $R<S<-d$.  Then, exactly as in \cite[Proposition 4.7]{RS}, a relative version of \cite[Proposition 6.10]{AFM} produces: \begin{itemize} \item a sequence $m_k\to\infty$;
\item subdomains $C_k\subset Z_{m_k}$ which are  all biholomorphic to $[-\ell_k,\ell_k]\times \mathcal{I}$ where $\ell_k\to\infty$ and where $\mathcal{I}$ is either $[0,1]$ or $S^1$, independently of $k$.
\item $\R$-shifts $w_k\co C_k\to \R\times Y$ of the $u_{m_k}|_{C_k}$ (\emph{i.e.}, compositions of $u_{m_k}|_{C_k}$ with the maps $(r,y)\mapsto (r+c_k,y)$ for a suitable sequence $c_k$) such that $\int_{C_k}w_{k}^{*}d\alpha\to 0$, and $\pm r\circ w_k(\pm \ell_k,t)\to\pm\infty$ uniformly in $t$.  In the case that $\mathcal{I}=[0,1]$ we will have $w_k([-\ell_k,\ell_k]\times\{0,1\})\subset \R\times\Lambda$.
\end{itemize}

This then implies, as in \cite[Theorem 5.3]{AFM} and \cite[Proposition 4.8]{RS}, that modulo a sequence of $\R$-shifts, a subsequence of the $u_{m_k}|_{C_k}=\tilde{u}_{m_k}|_{C_k}$ converges in $C^{\infty}_{\mathrm{loc}}$ to a trivial cylinder over a closed Reeb orbit (if $\mathcal{I}=S^1$) or Reeb chord for $\Lambda$ (if $\mathcal{I}=[0,1]$).  In particular this Reeb orbit or chord would represent the same homotopy class as the projection to $Y$ of the image of $\{0\}\times\mathcal{I}$ under $\tilde{u}_{m_k}\circ g_k\co [-\ell_k,\ell_k]\times \mathcal{I}\to M$ for large $k$, where $g_k$ is the biholomorphism that identifies $[-\ell_k,\ell_k]\times \mathcal{I}$ with $C_k\subset Z_{m_k}\subset \R\times(0,1]$. But the projection to $Y$ of the image of any circle or arc under $\tilde{u}_{m_k}\co\R\times(0,1]\to\R\times Y$ of course represents the trivial class in $\pi_1(Y)$ or $\pi_1(Y,\Lambda)$ because $\pi_1(\R\times(0,1])=\pi_1(\R\times(0,1],\R\times\{1\})=\{0\}$.  Thus our Reeb orbit or chord is homotopically trivial, in contradiction with the hypothesis that $\Lambda$ is hypertight.
\end{proof}

\begin{remark}
The assumption that $\Lambda$ is hypertight cannot be completely dispensed with in Lemma \ref{rs}.  If $\dim Y\geq 5$ and $(Y,\ker\alpha)$ is overtwisted, then according to \cite{Mur}\footnote{The equivalence of the condition in \cite{Mur} to overtwistedness is proven in \cite{CMP}. If one allows $Y$ to be noncompact there is a much earlier example in \cite{Mul}.} there exist closed exact Lagrangian submanifolds $P\subset \R\times Y$.  Letting $F_t\co \R\times Y\to\R\times Y$ denote the map $(r,y)\mapsto (r-t,y)$, the exactness of $P$ implies that the Lagrangian submanifolds $F_t(P)$ are all Hamiltonian isotopic.  Arguing as in \cite[Proof of Proposition 11]{Che} this implies that the Chekanov-Hofer pseudometric on the orbit of $P$ is degenerate, and hence identically zero by \cite[Theorem 2]{Che}. So if $\Lambda$ is a Legendrian submanifold (say contained in a small Darboux chart of $Y$) such that $\R\times \Lambda$ intersects $P$ transversely and if $\sigma\co \R\times Y\to \R\times Y$ is a Hamiltonian diffeomorphism such that $\sigma(P)\cap (\R\times \Lambda)=\varnothing$, then there are Hamiltonians $K\co \R\times (\R\times Y)\to \R$ having $\int_{0}^{1}\max|K(t,\cdot)|dt$ as small as one likes such that $\sigma_{K}^{1}(P)=\sigma(P)$ and hence $\sigma_{K}^{1}(P)\cap(\R\times\Lambda)=\varnothing$.

Note that by the main result of \cite{AH} a contact form on (what is now called) a compact overtwisted contact manifold always admits contractible periodic Reeb orbits, and thus cannot contain a hypertight Legendrian.  
\end{remark}

\begin{proof}[Conclusion of the proof of Theorem \ref{hypermain}]
Let $U$ be an open subset of $Y$ with nonempty intersection with $\Lambda$ and let $p\in U\cap \Lambda$.  By \cite[Lemma 4.7]{M19},  there is a compact pre-Lagrangian submanifold $L$ of $Y$ that is contained in $U$ and intersects $\Lambda$ transversely at $p$; by an easy general position argument we can arrange for all other intersections of $L$ and $\Lambda$ to be transverse.  Choose $g\co Y\to\R$ so that $d(e^g\alpha)|_{L}=0$ and $g(p)=0$, and let $\hat{L}=\{(g(q),q)|q\in L\}$, so that $\hat{L}$ is a compact Lagrangian submanifold of $(\R\times Y,d(e^r\alpha))$ whose intersection with $\R\times\Lambda$ is transverse and contains the point $(0,p)$. 

Time-dependent contact vector fields $(V_t)_{t\in[0,1]}$ are in one-to-one correspondence with smooth functions $H\co [0,1]\times Y\to \R$ (by setting $H(t,y)=\alpha_y(V_t)$); given $H\co [0,1]\times Y\to \R$ and $t\in [0,1]$ let $\phi_{H}^{t}$ denote the time-$t$ map of the corresponding time-dependent vector field.

Our goal is then to provide a positive lower bound for $\int_{0}^{1}\max_Y|H(t,\cdot)|dt$ for all $H\co [0,1]\times Y\to \R$ with the property that $\phi_{H}^{1}(\bar{U})\cap \Lambda =\varnothing$.  This property obviously implies that  $\phi_{H}^{1}(L)\cap \Lambda=\varnothing$ where $L$ is the pre-Lagrangian contained in $U$ from the first paragraph of the proof.  A standard calculation (\cite[Section 4]{MSp}) shows that, if $h_t\co Y\to \R$ are the smooth functions obeying $\phi_{H}^{t*}\alpha=e^{h_t}\alpha$, then the (symplectic) Hamiltonian $\hat{H}\co [0,1]\times\R\times Y\to \R$ defined by $\hat{H}(t,r,y)=e^rH(t,y)$ obeys $\sigma_{\hat{H}}^{t}(r,y)=(r-h_t(y),\phi_{H}^{t}(y))$.  Hence in particular $\sigma_{\hat{H}}^{1}(\hat{L})\cap (\R\times \Lambda)=\varnothing$.  This Hamiltonian $\hat{H}$ is not compactly supported; to obtain a compactly supported Hamiltonian one can multiply $\hat{H}$ by a cutoff function $\chi$ that is equal to $1$ on $[0,1]\times [-M,M]\times Y$ for a value $M$ large enough that $|g(y)-h_t(y)|<M$ for all $(t,y)\in [0,1]\times L$, as then $\chi\hat{H}$ and $\hat{H}$ will coincide on a neighborhood of $\cup_{t\in [0,1]}\sigma_{\hat{H}}^{t}(\Lambda)$ and so $\sigma_{\chi\hat{H}}^{1}(\Lambda)=\sigma_{\hat{H}}^{1}(\Lambda)$.

Now as shown in \cite[Proof of Theorem 1.3]{U2}, the function \[ K(t,(r,y))=\chi\hat{H}(1-t,\sigma_{\chi\hat{H}}^{1-t}(\sigma_{\chi \hat{H}}^{1})^{-1}(r,y)), \] which generates the Hamiltonian flow $\sigma_{K}^{t}=\sigma_{\chi \hat{H}}^{1}\circ(\sigma_{\chi\hat{H}}^{1-t})^{-1}$, has the useful properties that $\sigma_{K}^{1}=\sigma_{\chi\hat{H}}^{1}$ and $K(t,\sigma_{K}^{t}(r,y))=\chi\hat{H}(1-t,(r,y))$ for all $t,r,y$.   If we now let $K'\co [0,1]\times (\R\times Y)\to\R$ be a smooth function that is supported on a small neighborhood of $\cup_{t}\{t\}\times \sigma_{K}^{t}(\hat{L})$ and coincides with $K$ on a smaller neighborhood of $\cup_{t}\{t\}\times \sigma_{K}^{t}(\hat{L})$ then we will have $\sigma_{K'}^{1}(\hat{L})=\sigma_{K}^{1}(\hat{L})=\sigma_{\chi\hat{H}}^{1}(\hat{L})$, and (by taking the first neighborhood small enough) we can arrange that, for all $t$, \[ \max_{\R\times Y} |K'(t,\cdot)|\leq \max_{(r,y)\in \hat{L}}|K(t,\sigma_{K}^{t}(r,y))|+\frac{\hbar}{2}=\max_{\hat{L}}|\chi\hat{H}(t,\cdot)|+\frac{\hbar}{2}\] where $\hbar$ is the value from Lemma \ref{rs} (applied with $P=\hat{L}$). But by construction \[ \max_{\hat{L}}|\chi\hat{H}(t,\cdot)|\leq e^{\max_L g}\max_Y|H(t,\cdot)|.\]  

So since $\sigma_{K'}^{1}(\hat{L})\cap (\R\times\Lambda)=\sigma_{\chi\hat{H}}^{1}(\hat{L})\cap (\R\times\Lambda)=\varnothing$, Lemma \ref{rs} gives \[ \hbar\leq \int_{0}^{1}\max_{\R\times Y}|K'(t,\cdot)|dt\leq \frac{\hbar}{2}+e^{\max_L g}\int_{0}^{1}\max_Y|H(t,\cdot)|dt.\]  Since $H$ was arbitrary subject to the assumption that $\phi_{H}^{1}(\bar{U})\cap \Lambda=\varnothing$ this shows that  \[ e_{\alpha}^{Y}(U,\Lambda)\geq \frac{\hbar}{2}e^{-\max_L g}>0.\] 
\end{proof}

\section{Coisotropic submanifolds}\label{coisosect}

The proof of Theorem \ref{main} is completed at the end of this section, after we establish some basic results about coisotropic submanifolds of contact manifolds and their connection to local rigidity.
The literature is somewhat inconsistent as to the definition of a coisotropic submanifold of a contact manifold; our convention in this paper is:

\begin{dfn}\cite{Huang} \label{coisodef} Let $(Y,\xi)$ be a contact manifold, $C\subset Y$ a submanifold, and $p\in C$.  We say that $C$ is \textbf{coisotropic at $p$} if, for one and hence every contact form $\alpha$ for $\xi$ defined on a neighborhood of $p$,  $T_pC\cap\xi_p$ is a coisotropic subspace of the symplectic vector space $(\xi_p,d\alpha_p)$ (\emph{i.e.}, if the $d\alpha_p$-orthogonal complement to $T_pC\cap\xi_p$ is contained in $T_pC\cap \xi_p$).  

We say the submanifold $C\subset Y$ is a coisotropic submanifold if it is coisotropic at $p$ for every $p\in C$.
\end{dfn}

Assuming that $\xi$ is coorientable, \cite[Proposition 3.1]{RZ} shows that $C$ is coisotropic if and only if $\R\times C$ is a coisotropic submanifold of the symplectization of $(Y,\xi)$.  See \cite[Proposition 1.2]{RZ}, as well as Corollaries \ref{coleg} and \ref{corigid} below, for other conditions equivalent to coisotropy.

We quickly observe:
\begin{prop}\label{legco}
Let $C$ be a submanifold of codimension $k$ in a $(2n+1)$-dimensional contact manifold $(Y,\xi)$, and $p\in C$.  If $k>n+1$ then $C$ is not coisotropic at $p$, and if $k=n+1$ then $C$ is coisotropic if and only if $C$ is Legendrian.
\end{prop}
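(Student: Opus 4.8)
The plan is to work entirely at the level of the linear algebra of the single symplectic vector space $(\xi_p,d\alpha_p)$, which has dimension $2n$, together with the subspace $T_pC\cap \xi_p$. First I would observe that $C$ has codimension $k$ in $Y$, so $T_pC$ has dimension $2n+1-k$ inside $T_pY$, which is $(2n+1)$-dimensional; hence $T_pC\cap\xi_p$ has dimension either $2n+1-k$ (if $T_pC\subset\xi_p$) or $2n-k$ (if $C$ is transverse to $\xi$ at $p$). In either case, writing $V=T_pC\cap\xi_p$ and $d=\dim V$, coisotropy at $p$ means $V^{\perp}\subseteq V$, and since $\dim V^{\perp}=2n-d$ this forces $2n-d\le d$, i.e. $d\ge n$. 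Conversely, $V^{\perp}\subseteq V$ is an additional condition beyond the dimension count only when $d>n$; when $d=n$ it is equivalent to $V$ being Lagrangian, i.e. $V^{\perp}=V$.

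Next I would run the two cases of the proposition. If $k>n+1$: then $d\le 2n+1-k\le 2n-(n+1)=n-1<n$, so the dimension inequality $d\ge n$ fails and $C$ cannot be coisotropic at $p$. This disposes of the first claim. If $k=n+1$: then $d$ is either $2n+1-k=n$ (in the case $T_pC\subset\xi_p$) or $2n-k=n-1$ (in the transverse case). In the transverse case $d=n-1<n$, so $C$ is not coisotropic at $p$; and in that case $T_pC\not\subseteq\xi_p$, so $C$ is also not Legendrian at $p$. So if $C$ is coisotropic at $p$ we must be in the case $T_pC\subset\xi_p$ with $\dim(T_pC\cap\xi_p)=\dim T_pC=n$; then coisotropy says $V=T_pC$ is a coisotropic subspace of dimension $n$ in the $2n$-dimensional symplectic space, which is exactly the condition that $V$ is Lagrangian, and together with $T_pC\subset\xi_p$ this is precisely the definition of $C$ being Legendrian at $p$. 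Running the implication in the reverse direction is immediate: if $C$ is Legendrian then $T_pC\subset\xi_p$ with $\dim T_pC=n$ and $T_pC$ is a Lagrangian, hence coisotropic, subspace, so $C$ is coisotropic at $p$. Since all of this holds pointwise for every $p\in C$, we get the global statement that (for $k=n+1$) $C$ is coisotropic if and only if $C$ is Legendrian.

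I do not anticipate a serious obstacle here: the entire argument is elementary symplectic linear algebra, the only points requiring any care being (a) distinguishing the two cases $T_pC\subset\xi_p$ versus $C\pitchfork\xi$ at $p$, since these give different dimensions for $T_pC\cap\xi_p$, and (b) recalling the standard fact that an $n$-dimensional subspace of a $2n$-dimensional symplectic space is coisotropic iff it is Lagrangian (which follows from $\dim V+\dim V^{\perp}=2n$ forcing $V^{\perp}\subseteq V$ to be an equality when $\dim V=n$). The independence of the condition from the choice of contact form $\alpha$ is already built into Definition \ref{coisodef}, so no extra work is needed there; one could note that rescaling $\alpha$ rescales $d\alpha_p|_{\xi_p}$ by a positive constant and does not change orthogonal complements.
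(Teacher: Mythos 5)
Your argument is correct and is essentially the paper's proof: both reduce to counting the codimension of $T_pC\cap\xi_p$ in the $2n$-dimensional symplectic vector space $(\xi_p,d\alpha_p)$ and using that a coisotropic subspace has codimension at most $n$, with the codimension-$n$ (equivalently dimension-$n$) case forcing the subspace to be Lagrangian. The only step you assert without justification is that a Legendrian $C$ has each $T_pC$ Lagrangian in $\xi_p$; this is not automatic for an arbitrary $n$-dimensional subspace of $\xi_p$, but follows (as the paper makes explicit) because $\alpha$ pulls back to zero on all of $C$, hence so does $d\alpha$, so each $T_pC$ is an $n$-dimensional isotropic, hence Lagrangian, subspace.
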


\begin{proof}
For each $p\in C$ the subspace $T_pC\cap \xi_p$ of the $2n$-dimensional vector space $\xi_p$ has codimension $k-1$ if $T_pC\subset \xi_p$, and codimension $k$ otherwise.  Since a coisotropic subspace of $\xi_p$ would have codimension at most $n$ this shows that $C$  can never be coisotropic at $p$  if $k>n+1$, and that if $k=n+1$ then $C$ is coisotropic at $p$ if and only if $T_pC$ is a Lagrangian subspace of $\xi_p$ with respect to the form $(d\alpha)_p$ (where $\alpha$ is a contact form for $\xi$ defined near $p$).   If $C$ is Legendrian (and hence has codimension $n+1$) then $\alpha$ and $d\alpha$ both vanish on $T_pC$ for all $p$ and hence each $T_pC$ is indeed a Lagrangian subspace of $\xi_p=\ker\alpha_p$.  Conversely if the codimension-$(n+1)$ submanifold $C$ is coisotropic then the above discussion shows that $T_pC\subset \xi_p$ for all $p\in C$ and hence that  $C$ is Legendrian.
\end{proof}

In general if $(V,\omega)$ is a symplectic vector space and $W\leq V$ is a subspace we write $W^{\omega}$ for the $\omega$-orthogonal complement: $W^{\omega}=\{v\in V|(\forall w\in W)(\omega(v,w)=0)\}$.  Of course $\dim V=\dim W+\dim W^{\omega}$, and $W$ is coisotropic iff $W^{\omega}\leq  W$.

\begin{lemma}\label{codimwedge}
Let $(V,\omega)$ be a $2n$-dimensional symplectic vector space, and let $W\leq V$ be a subspace of codimension $c\leq n$.  Then $(\omega|_W)^{\wedge (n-c)}\neq 0$, and $(\omega|_W)^{\wedge(n-c+1)}=0$ if and only if $W$ is a coisotropic subspace.
\end{lemma}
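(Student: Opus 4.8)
The plan is to reduce the statement to the normal form of a (possibly degenerate) alternating form on $W$. Write $K=W\cap W^{\omega}$ for the radical of the restricted form $\omega|_W$. Since $\omega$ is nondegenerate on $V$ we have $\dim W^{\omega}=2n-\dim W=c$, and therefore $\dim K\le c$, with equality if and only if $W^{\omega}\subseteq W$, i.e.\ (by the characterization recalled just above the lemma) if and only if $W$ is coisotropic. The form $\omega|_W$ descends to a nondegenerate symplectic form on $W/K$, so $\dim W-\dim K$ is even; set $m=\tfrac12(\dim W-\dim K)=n-\tfrac12(c+\dim K)$, a nonnegative integer.

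The one genuinely computational ingredient, which I would record first, is the elementary fact that $(\omega|_W)^{\wedge j}\neq 0$ for $0\le j\le m$ while $(\omega|_W)^{\wedge j}=0$ for $j>m$. To prove it, choose a complement $W'$ of $K$ in $W$, so that $\omega|_{W'}$ is nondegenerate of rank $2m$; taking a symplectic basis $e_1,f_1,\dots,e_m,f_m$ of $W'$ and extending the dual covectors by zero on $K$ exhibits $\omega|_W$, as an element of $\Lambda^2 W^{*}$, in the form $\sum_{i=1}^m e_i^{*}\wedge f_i^{*}$. Then $(\omega|_W)^{\wedge m}=m!\,e_1^{*}\wedge f_1^{*}\wedge\cdots\wedge e_m^{*}\wedge f_m^{*}\neq 0$; any higher exterior power repeats some factor $e_i^{*}\wedge f_i^{*}$ and hence vanishes; and for $j<m$ the power $(\omega|_W)^{\wedge j}$ is a nonzero linear combination of linearly independent decomposable forms.

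With this in hand both assertions are bookkeeping. From $\dim K\le c$ we get $m\ge n-c$, so $(\omega|_W)^{\wedge(n-c)}\neq 0$. For the second assertion, $(\omega|_W)^{\wedge(n-c+1)}=0$ holds if and only if $n-c+1>m$, i.e.\ (since these are integers) if and only if $m\le n-c$; by the formula $m=n-\tfrac12(c+\dim K)$ this is equivalent to $\dim K\ge c$, and combined with the bound $\dim K\le c$ it is equivalent to $\dim K=c$, i.e.\ to $W$ being coisotropic. I do not expect a serious obstacle here: the proof is pure linear algebra, and the only points deserving a moment's care are the dimension count $\dim W^{\omega}=2n-\dim W$ and the observation that $\dim K\ge c$ already forces $\dim K=c$, so that one need not separately track the parity of $\dim K$.
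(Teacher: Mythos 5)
Your proof is correct and follows essentially the same route as the paper: both split $W$ into the radical $W\cap W^{\omega}$ plus a complement on which $\omega$ is nondegenerate, identify the top nonvanishing power of $\omega|_W$ with half the rank, and then translate the dimension count $\dim(W\cap W^{\omega})\leq \dim W^{\omega}=c$ (with equality exactly in the coisotropic case) into the two assertions. The only cosmetic difference is that you make the rank computation explicit with a symplectic basis, where the paper phrases it as $\omega|_W=\pi^{*}(\omega|_X)$ for the projection onto the complement.
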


\begin{proof}
Choose any subspace $X\leq W$ such that $W=(W\cap W^{\omega})\oplus X$.  It is then straightforward to see that $\omega$ restricts nondegenerately to $X$, and that if $\pi\co W\to X$ is the projection with kernel $W\cap W^{\omega}$ then $\omega|_W=\pi^{*}(\omega|_X)$.  So $X$ has some even dimension $2j$, in which case $(\omega|_X)^{\wedge j}\neq 0$ while $(\omega|_X)^{\wedge (j+1)}=0$, and hence $(\omega|_W)^{\wedge j}\neq 0$ while $(\omega|_W)^{\wedge (j+1)}=0$. 

Now \[ 2j=\dim W-\dim(W\cap W^{\omega})\geq \dim W-\dim W^{\omega}=2n-2c,\] with equality holding iff $W\cap W^{\omega}=W^{\omega}$, \emph{i.e.} iff $W$ is coisotropic.  Since in any event $j\geq n-c$ and, as already noted, $(\omega|_W)^{\wedge j}\neq 0$, this shows that we have $(\omega|_{W})^{\wedge(n-c)}\neq 0$ for arbitrary $W$.  If $W$ is not coisotropic then $j\geq n-c+1$ and so likewise $(\omega|_W)^{\wedge(n-c+1)}\neq 0$, while if $W$ is coisotropic then $n-c+1=j+1$ and hence $(\omega|_W)^{\wedge(n-c+1)}=(\omega|_W)^{\wedge(j+1)}=0$.
\end{proof}

\begin{prop}\label{dlambda}
Let $C$ be a submanifold of codimension $k\leq n$ in a $(2n+1)$-dimensional contact manifold $(Y,\xi)$, let $p\in C$, let $U$ be a neighborhood of $p$ and $\alpha\in \Omega^1(U)$ a contact form for $\xi|_U$, and write $\lambda=\alpha|_{C\cap U}$.  \begin{itemize}
\item If $\lambda_p=0$, then $(d\lambda)_{p}^{\wedge(n-k+1)}\neq 0$, and $(d\lambda)_{p}^{\wedge(n-k+2)}=0$ if and only if $C$ is coisotropic at $p$.
 \item If $\lambda_p\neq 0$, then $\lambda_p\wedge(d\lambda)_{p}^{\wedge(n-k)}\neq 0$, and $\lambda_p\wedge(d\lambda)_{p}^{\wedge(n-k+1)}=0$ if and only if $C$ is coisotropic at $p$.
\end{itemize}
\end{prop}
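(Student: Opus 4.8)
The plan is to reduce both bullet points to the purely linear-algebraic Lemma \ref{codimwedge}, applied to the $2n$-dimensional symplectic vector space $(\xi_p,(d\alpha)_p)$. The two inputs I would use are that $d\lambda=(d\alpha)|_{C\cap U}$, so that $(d\lambda)_p$ is the restriction of $(d\alpha)_p$ to $T_pC$, and that by Definition \ref{coisodef} the statement ``$C$ is coisotropic at $p$'' means precisely that $T_pC\cap\xi_p$ is a coisotropic subspace of $(\xi_p,(d\alpha)_p)$. The dichotomy $\lambda_p=0$ versus $\lambda_p\neq 0$ is exactly the dichotomy $T_pC\subset\xi_p$ versus $T_pC\nsubset\xi_p$.

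First I would handle the case $\lambda_p=0$. Then $T_pC\subset\xi_p$, so $W:=T_pC\cap\xi_p=T_pC$ has codimension $k-1$ in $\xi_p$, and $(d\lambda)_p$ is exactly $(d\alpha)_p|_W$. Since $k-1\leq n-1$, Lemma \ref{codimwedge} applies with $c=k-1$ and yields at once that $(d\lambda)_p^{\wedge(n-k+1)}\neq 0$, and that $(d\lambda)_p^{\wedge(n-k+2)}=0$ iff $W=T_pC\cap\xi_p$ is coisotropic, i.e.\ iff $C$ is coisotropic at $p$.

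The case $\lambda_p\neq 0$ needs one preliminary linear-algebra observation to strip off the factor $\lambda_p$: if $\mu$ is a nonzero covector on a finite-dimensional vector space, with kernel hyperplane $H$, and $\nu$ is a $2$-form, then choosing $e$ with $\mu(e)=1$ and writing $\nu=\mu\wedge\beta+\tilde\nu$ with $\tilde\nu\in\Lambda^2 H^*$ one gets $\mu\wedge\nu^{\wedge m}=\mu\wedge\tilde\nu^{\wedge m}$ (since every other term contains $\mu\wedge\mu=0$), and wedging with $\mu$ is injective on $\Lambda^\bullet H^*$, so $\mu\wedge\nu^{\wedge m}\neq 0$ iff $(\nu|_H)^{\wedge m}=\tilde\nu^{\wedge m}\neq 0$. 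Applying this with $\mu=\lambda_p$ and $\nu=(d\lambda)_p$ on $T_pC$, the kernel hyperplane is $H=T_pC\cap\xi_p$, which has codimension $k\leq n$ in $\xi_p$ because $T_pC+\xi_p=T_pY$, and $\nu|_H=(d\alpha)_p|_{T_pC\cap\xi_p}$. Lemma \ref{codimwedge} with $W=T_pC\cap\xi_p$ and $c=k$ then gives $\big((d\alpha)_p|_{T_pC\cap\xi_p}\big)^{\wedge(n-k)}\neq 0$ --- hence $\lambda_p\wedge(d\lambda)_p^{\wedge(n-k)}\neq 0$ --- and tells us that $\big((d\alpha)_p|_{T_pC\cap\xi_p}\big)^{\wedge(n-k+1)}=0$ --- hence $\lambda_p\wedge(d\lambda)_p^{\wedge(n-k+1)}=0$ --- holds iff $T_pC\cap\xi_p$ is coisotropic, i.e.\ iff $C$ is coisotropic at $p$.

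I do not expect any real obstacle here: once Lemma \ref{codimwedge} is in hand, the content is just the bookkeeping of the two codimensions ($k-1$ in the tangent case, $k$ in the transverse case) and the elementary splitting identity used in the second case, both of which become transparent after choosing a basis of $T_pC$ adapted to the decomposition $T_pC=\R e\oplus H$. It may also be worth remarking in the write-up that the hypothesis $k\leq n$ enters exactly through the requirement in Lemma \ref{codimwedge} that the relevant subspace of $\xi_p$ have codimension at most $n$ (the borderline case $k=n+1$ being covered instead by Proposition \ref{legco}).
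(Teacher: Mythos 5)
Your argument is correct and follows essentially the same route as the paper: both cases reduce to Lemma \ref{codimwedge} applied to $T_pC\cap\xi_p$ (codimension $k-1$ or $k$ in $\xi_p$ according to whether $\lambda_p=0$), with the hypothesis $k\leq n$ entering only to make that lemma applicable. Your splitting $\nu=\mu\wedge\beta+\tilde\nu$ together with injectivity of wedging by $\mu$ is just a formalized version of the paper's step of evaluating $\lambda_p\wedge(d\lambda)_p^{\wedge j}$ on tuples $(v,w_1,\ldots,w_{2j})$ with $v\in T_pC\setminus\xi_p$ and $w_i\in T_pC\cap\xi_p$, so there is no substantive difference.
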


\begin{proof}
If $\lambda_p=0$, then $T_pC=T_pC\cap \xi_p$ is a codimension-$(k-1)$ subspace of $\xi_p$, so the statement follows from Lemma \ref{codimwedge}.

If instead $\lambda_p\neq 0$, then since $\dim(T_pC)-\dim(T_pC\cap\xi_p)=\dim(T_pY)-\dim(\xi_p)=1$ we see that $T_pC\cap \xi_p$ has codimension $k$ in $\xi_p$.  So applying Lemma \ref{codimwedge} shows that $\left((d\lambda)|_{T_pC\cap \xi_p}\right)^{\wedge(n-k)}\neq 0$, and that $\left((d\lambda)|_{T_pC\cap \xi_p}\right)^{\wedge(n-k+1)}=0$ if and only if $C$ is coisotropic at $p$.  If we fix an an arbitrary element $v$ of $T_pC\setminus \xi_p$ then, for $j\in \mathbb{N}$, the $(2j+1)$-form $\lambda_p\wedge(d\lambda)_p$ on $T_pC$ is zero iff it evaluates to $0$ on all tuples of form $(v,w_1,\ldots,w_{2j})$ where $w_1,\ldots, w_{2j}\in T_pC\cap \xi_p$.  So what we have shown about powers of $(d\lambda)|_{T_pC\cap \xi_p}$ implies that indeed $\lambda_p\wedge(d\lambda)_{p}^{\wedge(n-k)}\neq 0$ and that $\lambda_p\wedge(d\lambda)_{p}^{\wedge(n-k+1)}=0$ iff $C$ is coisotropic at $p$.
\end{proof}

\begin{prop}\label{intleg}
Let $C$ be a submanifold of a contact manifold $(Y,\xi)$ and $p\in C$, and suppose that there is a Legendrian submanifold $\Lambda$ of $Y$ such that $p\in \Lambda\subset C$.  Then $C$ is coisotropic at $p$.
\end{prop}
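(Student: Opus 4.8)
The plan is to reduce the statement to the elementary linear-algebra fact that the symplectic orthogonal complement is inclusion-reversing, combined with the observation that the tangent space to a Legendrian submanifold is a Lagrangian subspace of the contact hyperplane equipped with $d\alpha$.

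First I would fix a contact form $\alpha$ for $\xi$ on a neighborhood of $p$; by Definition \ref{coisodef} it then suffices to show that $W := T_pC\cap\xi_p$ is a coisotropic subspace of the symplectic vector space $(\xi_p, d\alpha_p)$. Since $\Lambda$ is Legendrian we have $\alpha|_{\Lambda}=0$ and hence $d\alpha|_{T\Lambda}=0$; in particular $T_p\Lambda\subseteq\xi_p$ and $(d\alpha_p)|_{T_p\Lambda}=0$, so $T_p\Lambda$ is an isotropic subspace of $(\xi_p,d\alpha_p)$ of dimension $n=\tfrac12\dim\xi_p$, i.e. a Lagrangian subspace. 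Thus $(T_p\Lambda)^{d\alpha_p}=T_p\Lambda$.

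Next, because $p\in\Lambda\subseteq C$ we have $T_p\Lambda\subseteq T_pC$, and together with $T_p\Lambda\subseteq\xi_p$ this gives $T_p\Lambda\subseteq W$. The inclusion-reversing property of $(\cdot)^{d\alpha_p}$ then yields
\[ W^{d\alpha_p}\subseteq (T_p\Lambda)^{d\alpha_p}=T_p\Lambda\subseteq W, \]
which is precisely the statement that $W$ is coisotropic. Hence $C$ is coisotropic at $p$.

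There is essentially no serious obstacle here: once Definition \ref{coisodef} is unwound the conclusion is immediate, and the only mild point to keep in mind is the independence of the notion of coisotropy at $p$ from the choice of $\alpha$, which is already part of Definition \ref{coisodef}. As an alternative one could instead appeal to Proposition \ref{dlambda}, bounding the relevant powers of $\alpha|_{T_pC}$ and $d\alpha|_{T_pC}$ via a dimension count that uses the vanishing of these forms on the Lagrangian subspace $T_p\Lambda\subseteq T_pC$, but the direct linear-algebra argument above is cleaner and shorter.
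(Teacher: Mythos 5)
Your argument is correct and is essentially identical to the paper's proof: both use that $T_p\Lambda$ is a Lagrangian subspace of $(\xi_p,d\alpha_p)$ contained in $T_pC\cap\xi_p$ and then apply the inclusion-reversing property of the $d\alpha_p$-orthogonal complement to get $(T_pC\cap\xi_p)^{d\alpha_p}\subset(T_p\Lambda)^{d\alpha_p}=T_p\Lambda\subset T_pC\cap\xi_p$. No issues.
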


\begin{proof}
Under the assumption we have $T_p\Lambda=T_p\Lambda\cap\xi_p\subset T_pC\cap \xi_p$ with $T_p\Lambda$ a Lagrangian subspace of $\xi_p$ and hence, taking $(d\alpha_p)$-orthogonal complements within $\xi_p$ where $\alpha$ is a contact form defined near $p$, \[ (T_pC\cap \xi_p)^{d\alpha_p}\subset (T_p\Lambda)^{d\alpha_p}=T_p\Lambda\subset T_pC\cap \xi_p.\]
\end{proof}

Below in Proposition \ref{legexists}  we will establish a partial converse to Proposition \ref{intleg}; we begin with observations concerning   flows of certain contact vector fields.  For this purpose it is convenient to identify contact vector fields with Hamiltonians, which requires choosing a contact form, so the next couple of lemmas will require the ambient contact manifold to be coorientable; while we ultimately want to prove certain statements that do not require a coorientability hypothesis, these statements are local so this does not pose a serious problem.

    Recall that if $\alpha$ is a contact form on a smooth manifold $Y$ and $\xi=\ker\alpha$ the Hamiltonian vector field of a smooth function $H\co Y\to \R$ is the vector field $X_H$ characterized uniquely by the properties that $\alpha(X_H)=H$ and $\iota_{X_H}d\alpha=dH(R_{\alpha})\alpha-dH$ where $R_{\alpha}$ is the Reeb field of $\alpha$.  

\begin{lemma}\label{coisovanish}
Let $C$ be a submanifold of $Y$, let $\alpha$ be a contact form on $Y$ with $\xi=\ker\alpha$, and  let $H\co Y\to \R$ be smooth.  Then $(X_H)_q\in (T_qC\cap \xi_q)^{d\alpha|_{\xi_q}}$ for all $q\in C$ if and only if $H|_C=0$.
\end{lemma}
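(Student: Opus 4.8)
The proof is a direct unwinding of definitions, so the plan is short. I would begin by recalling that $X_H$ is characterized by the two conditions $\alpha(X_H)=H$ and $\iota_{X_H}d\alpha=dH(R_{\alpha})\alpha-dH$, and by noting that, since the $d\alpha|_{\xi_q}$-orthogonal complement $(T_qC\cap\xi_q)^{d\alpha|_{\xi_q}}$ is by definition a subspace of the symplectic vector space $(\xi_q,d\alpha_q)$, the assertion ``$(X_H)_q\in (T_qC\cap\xi_q)^{d\alpha|_{\xi_q}}$'' packages together two statements: (a) that $(X_H)_q$ actually lies in $\xi_q$, and (b) that $d\alpha_q((X_H)_q,v)=0$ for every $v\in T_qC\cap\xi_q$. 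Condition (a) is equivalent to $0=\alpha_q((X_H)_q)=H(q)$.

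For condition (b) the key (and essentially only) computation is that, for any $v\in\xi_q$, so that $\alpha_q(v)=0$,
\[ d\alpha_q\big((X_H)_q,v\big)=(\iota_{X_H}d\alpha)_q(v)=\big(dH(R_{\alpha})\,\alpha-dH\big)_q(v)=-dH_q(v). \]
With this in hand both implications are immediate. For the forward direction, if $(X_H)_q\in (T_qC\cap\xi_q)^{d\alpha|_{\xi_q}}$ for all $q\in C$, then already ingredient (a) forces $H(q)=0$ for every $q\in C$, i.e. $H|_C=0$. For the converse, if $H|_C=0$ then for $q\in C$ we get $\alpha_q((X_H)_q)=H(q)=0$, giving (a); and for $v\in T_qC\cap\xi_q\subset T_qC$ we have $dH_q(v)=0$ since $H$ vanishes identically on $C$, so the displayed identity gives $d\alpha_q((X_H)_q,v)=0$, which is (b). Together (a) and (b) say precisely that $(X_H)_q\in (T_qC\cap\xi_q)^{d\alpha|_{\xi_q}}$.

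I do not anticipate any real obstacle here: the content is purely the algebra of the contact Hamiltonian formalism. The one point worth stating carefully is that membership in the orthogonal complement is taken \emph{inside} the symplectic vector space $(\xi_q,d\alpha_q)$, so it already encodes $(X_H)_q\in\xi_q$; this is what produces the pointwise vanishing $H(q)=0$, while the orthogonality condition then follows for free from the fact that any $v\in T_qC\cap\xi_q$ is tangent to $C$.
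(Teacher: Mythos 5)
Your proof is correct and follows essentially the same route as the paper's: the forward direction reads off $H(q)=\alpha_q(X_H)=0$ from $(X_H)_q\in\xi_q$, and the converse verifies $(X_H)_q\in\xi_q$ and then kills $d\alpha_q((X_H)_q,v)$ using $\alpha_q(v)=0$ and $dH_q(v)=0$ for $v\in T_qC\cap\xi_q$. The only cosmetic difference is that you first simplify $\iota_{X_H}d\alpha$ to $-dH$ on $\xi_q$ before invoking tangency to $C$, whereas the paper cancels both terms simultaneously.
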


\begin{proof}
The forward implication is trivial: if $(X_H)_q\in (T_qC\cap \xi_q)^{d\alpha|_{\xi_q}}$ for all $q\in C$ then in particular $(X_H)_q\in \xi_q=\ker\alpha_q$ and so, for all $q\in C$, $H(q)=\alpha_q(X_H)=0$.

Conversely if $H|_C=0$ then for each $q\in C$ we have $\alpha_q(X_H)=0$ and so $(X_H)_q\in \xi_q$, and moreover, for each $v\in T_qC\cap \xi_q$, \[ d\alpha(X_H,v)=dH(R_{\alpha})\alpha(v)-dH(v)=0 \] where the first term vanishes because $v\in \xi_q$ and the second vanishes because $v\in T_qC$. 
\end{proof}

\begin{lemma}\label{mult} If $C$ is a submanifold of a smooth manifold $Y$ equipped with a contact form $\alpha$ and if $H\co Y\to \R$ is smooth with $H|_C=0$, then for any other smooth function $f\co Y\to \R$ we have $(X_{fH})_q=f(q)(X_H)_q$ for all $q\in C$.
\end{lemma}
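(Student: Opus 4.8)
The plan is to use the pointwise characterization of the contact Hamiltonian vector field together with the product rule, exploiting the fact that the hypothesis $H|_C=0$ makes the ``error terms'' in that product rule vanish exactly along $C$.

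First I would record the following elementary observation: for each $q\in Y$, the map $T_qY\to\R\oplus T_q^*Y$ sending $v\mapsto(\alpha_q(v),\iota_vd\alpha_q)$ is injective. Indeed if $\alpha_q(v)=0$ and $\iota_vd\alpha_q=0$, then $v\in\xi_q=\ker\alpha_q$, and since $d\alpha_q$ restricts nondegenerately to $\xi_q$ the vanishing of $\iota_vd\alpha_q|_{\xi_q}$ forces $v=0$. Consequently, for any smooth $G\co Y\to\R$, the vector $(X_G)_q$ is characterized as the unique element of $T_qY$ with $\alpha_q((X_G)_q)=G(q)$ and $\iota_{(X_G)_q}d\alpha_q=dG(R_\alpha)|_q\,\alpha_q-dG_q$.

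Next I would apply this with $G=fH$ at a point $q\in C$. Since $H(q)=0$, the product rule gives $d(fH)_q=f(q)\,dH_q+H(q)\,df_q=f(q)\,dH_q$ and likewise $d(fH)(R_\alpha)|_q=f(q)\,dH(R_\alpha)|_q$; also $(fH)(q)=f(q)H(q)=0$, which equals $f(q)\,\alpha_q((X_H)_q)$ because $H|_C=0$ forces $\alpha_q((X_H)_q)=H(q)=0$. Plugging these into the defining equations for $X_{fH}$ shows that $\alpha_q((X_{fH})_q)=f(q)\,\alpha_q((X_H)_q)$ and
\[\iota_{(X_{fH})_q}d\alpha_q=f(q)\bigl(dH(R_\alpha)|_q\,\alpha_q-dH_q\bigr)=\iota_{f(q)(X_H)_q}d\alpha_q.\]
Thus $(X_{fH})_q$ and $f(q)(X_H)_q$ have the same image under the injective map above, hence are equal, which is the claim.

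An alternative organization would be to first establish the global identity $X_{fH}=fX_H+HX_f-fHR_\alpha$ (verified directly against the two defining equations of $X_{fH}$, using $\iota_{R_\alpha}d\alpha=0$), and then restrict to $C$, where the last two terms drop because $H|_C=0$; this is essentially the same computation in a different guise. There is no genuine obstacle here — the only point requiring care is that the asserted equality holds only along $C$ and not globally, the reason being precisely that the extra Leibniz terms are proportional to $H$ and so vanish on $C$.
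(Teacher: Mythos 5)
Your proof is correct and follows essentially the same route as the paper's: both verify that $f(q)(X_H)_q$ satisfies the two equations uniquely characterizing $(X_{fH})_q$, the key point in each case being that $H(q)=0$ gives $d(fH)_q=f(q)\,dH_q$. The only cosmetic difference is that the paper states the $d\alpha$-equation restricted to $\xi_q$ while you work with the full identity $\iota_{X_G}d\alpha=dG(R_\alpha)\alpha-dG$ (and correspondingly note $d(fH)(R_\alpha)|_q=f(q)\,dH(R_\alpha)|_q$), which changes nothing of substance.
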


\begin{proof} For any $q\in Y$, the tangent vector $(X_{fH})_q\in T_qY$ is uniquely characterized by the properties that $\alpha((X_{fH})_q)=f(q)H(q)$ and $\iota_{(X_{fH})_q}d\alpha|_{\xi_q}=-d(fH)|_{\xi_q}$ where $\xi_q=\ker\alpha_q$ so we just need to check that $f(q)(X_H)_q$ obeys the same properties when $q\in C$.  This is clear since, due to the assumption that $H|_C=0$, we have $d(fH)_q=f(q)dH_q$.
\end{proof}

\begin{prop}\label{legexists} Let $C$ be a coisotropic submanifold of a contact manifold $(Y,\xi)$ and let $p\in C$ with $T_pC\not\subset \xi_p$.  Then there is a Legendrian submanifold $\Lambda$ of $Y$ such that $p\in \Lambda\subset C$.
\end{prop}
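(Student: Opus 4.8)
The plan is to produce $\Lambda$ as an integral submanifold of a suitable distribution inside $C$, built from Hamiltonian vector fields of functions vanishing on $C$. Work locally near $p$ in a neighborhood $U$ on which $\xi$ is coorientable, with contact form $\alpha$; since the conclusion is local this loses nothing. Because $C$ is coisotropic at $p$ and $T_pC\not\subset\xi_p$, Proposition \ref{dlambda} (the $\lambda_p\neq 0$ case) controls the rank of $d\alpha|_{T_pC\cap\xi_p}$: writing $c=\operatorname{codim}C\leq n$ (codimension $>n+1$ is excluded by Proposition \ref{legco}, and one checks the other cases directly), the space $T_pC\cap\xi_p$ is a codimension-$c$ coisotropic subspace of $(\xi_p,d\alpha_p)$, so its characteristic subspace $(T_pC\cap\xi_p)^{d\alpha_p}$ has dimension $c$ and is contained in $T_pC\cap\xi_p$. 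By Lemma \ref{coisovanish}, this characteristic subspace is exactly $\{(X_H)_p : H\in C^\infty(U),\ H|_C=0\}$. Choose functions $H_1,\dots,H_c$ vanishing on $C$ whose Hamiltonian vector fields span $(T_pC\cap\xi_p)^{d\alpha_p}$ at $p$; by continuity they remain linearly independent, and by Lemma \ref{coisovanish} remain tangent to $C$ and inside $\xi$, on a smaller neighborhood of $p$.

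The distribution $D=\operatorname{span}(X_{H_1},\dots,X_{H_c})$ near $p$ then lies in $TC\cap\xi$ and has rank $c$. I would next argue it is involutive along $C$. The key computation: for $H,K$ vanishing on $C$, the bracket $[X_H,X_K]$ is, up to lower-order Reeb-direction terms, the Hamiltonian vector field of the contact Poisson bracket $\{H,K\}=dH(X_K)+\ldots$; more to the point, one shows directly that at each $q\in C$ the bracket $[X_{H_i},X_{H_j}]_q$ again lies in the characteristic subspace $(T_qC\cap\xi_q)^{d\alpha_q}$, hence (by the dimension count, which is constant along $C$ near $p$ since coisotropy is an open condition here) is a combination of the $(X_{H_\ell})_q$. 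The cleanest route is the symplectization: by \cite[Proposition 3.1]{RZ}, $\widehat C=\R\times C$ is coisotropic in $(\R\times Y,d(e^r\alpha))$, its characteristic foliation is spanned by the Hamiltonian vector fields (w.r.t. $d(e^r\alpha)$) of functions vanishing on $\widehat C$, this foliation is involutive by the standard symplectic fact, and the vector fields $e^rH_i$ project to $X_{H_i}$ on $Y$ (as used in the proof of Theorem \ref{hypermain}); pushing forward the integrability of the symplectic characteristic foliation gives integrability of $D$ along $C$. Frobenius then yields an integral submanifold $\Lambda$ through $p$ with $T_q\Lambda=D_q\subset\xi_q$ for all $q\in\Lambda$, and of dimension $c$.

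Finally I must check $\Lambda$ is Legendrian, i.e. that it is both isotropic for $d\alpha$ and of the right dimension $n$. Isotropy: since $D_q$ is the characteristic subspace $(T_qC\cap\xi_q)^{d\alpha_q}$ and $C$ is coisotropic, $D_q\subset(T_qC\cap\xi_q)\subset D_q^{d\alpha_q}$, so $d\alpha_q$ vanishes on $D_q$; combined with $D_q\subset\xi_q=\ker\alpha_q$ this makes $\Lambda$ an isotropic submanifold. An isotropic submanifold of a $(2n+1)$-dimensional contact manifold has dimension at most $n$, and it is Legendrian precisely when the dimension equals $n$. But $\dim\Lambda=c=\dim(T_pC\cap\xi_p)^{d\alpha_p}=\operatorname{codim}C$, and coisotropy of the codimension-$c$ subspace $T_pC\cap\xi_p$ of $\xi_p$ forces $\dim(T_pC\cap\xi_p)\geq n$, i.e. $2n-c\geq n$, while equality $c=n$ is exactly the case where $T_pC\cap\xi_p$ is Lagrangian; for $c<n$ the constructed $\Lambda$ has dimension $c<n$ and is isotropic but not yet Legendrian, so one enlarges it: inside the coisotropic $C$ one extends $\Lambda$ to a larger integral submanifold by adjoining further directions in $T_qC\cap\xi_q$ on which $d\alpha$ still vanishes (equivalently, choose an isotropic subspace of $(T_pC\cap\xi_p)/(T_pC\cap\xi_p)^{d\alpha_p}$ of maximal dimension and integrate a corresponding involutive extension of $D$ staying inside $TC\cap\xi$), arriving at an isotropic $\Lambda\subset C$ through $p$ of dimension exactly $n$, which is therefore Legendrian.

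I expect the main obstacle to be the integrability/Frobenius step: making precise that the relevant distribution inside $C\cap\xi$ is genuinely involutive (not merely involutive modulo the contact hyperplane) and that its rank is locally constant near $p$, which is where the coisotropy hypothesis and the passage to the symplectization do the real work; the dimension bookkeeping and the final Legendrian upgrade are routine linear algebra once that is in hand.
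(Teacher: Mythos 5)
Your first half tracks the paper's proof closely: you identify the characteristic distribution $\mathcal{F}=(TC\cap\xi)^{d\alpha|_{\xi}}$, of rank $k=\operatorname{codim}C$ near $p$ because $T_pC\not\subset\xi_p$, as spanned by contact Hamiltonian vector fields of functions vanishing on $C$ (Lemma \ref{coisovanish}), and you argue its involutivity. The paper does that step via $[X_{H_i},X_{H_j}]=X_{\{H_i,H_j\}}$ together with $\{H_i,H_j\}|_C=0$ from \cite[Proposition 1.2]{RZ}; your symplectization route (lifting to $e^rH_i$ on $\R\times Y$ and using coisotropy of $\R\times C$) is a workable alternative, modulo checking that the characteristic distribution of $\R\times C$ projects onto $\mathcal{F}$. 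The genuine gap is in the last step, which you dismiss as ``routine linear algebra.'' The Frobenius leaf of $\mathcal{F}$ through $p$ is isotropic but only $k$-dimensional with $k\le n$, and your proposed enlargement --- adjoin further directions in $T_qC\cap\xi_q$ on which $d\alpha$ vanishes and ``integrate a corresponding involutive extension of $D$ staying inside $TC\cap\xi$'' --- begs the question: a pointwise choice of Lagrangian subspace of the symplectic quotient $(T_qC\cap\xi_q)/\mathcal{F}_q$ does give a rank-$n$ isotropic subdistribution of $TC\cap\xi$ containing $\mathcal{F}$, but nothing guarantees that any such extension can be chosen involutive, and the existence of an involutive one is essentially the statement being proved. (Note also that $TC\cap\xi$ itself is not involutive in general, so there is no ambient integrable structure to lean on.)

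The paper closes exactly this gap by coisotropic reduction: after using Frobenius to produce a commuting frame $V_i=\sum_jf_{ij}X_{H_j}$ for $\mathcal{F}$ and Lemma \ref{mult} to rewrite $V_i|_C=X_{K_i}|_C$ with $K_i|_C=0$, it takes a transversal $Z\subset C$ through $p$ to the characteristic foliation, proves via Proposition \ref{dlambda} and the $d\alpha$-orthogonality of $\mathcal{F}$ that $\alpha|_Z$ is a contact form on the $(2(n-k)+1)$-dimensional $Z$, chooses a Legendrian $\Lambda_Z\subset Z$ through $p$ (contact Darboux), and defines $\Lambda$ as the flow-out of $\Lambda_Z$ under $\phi_{K_1}^{t_1}\circ\cdots\circ\phi_{K_k}^{t_k}$. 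Because these flows are contactomorphisms and the $X_{K_i}$ lie in $\xi$ along $C$, the form $\alpha$ annihilates $T\Lambda$, and $\dim\Lambda=(n-k)+k=n$, so $\Lambda$ is Legendrian and contained in $C$. Some version of this reduction (or an equivalent normal form for $C$ near $p$) is needed to make your enlargement precise; as written, your argument only establishes the existence of a $k$-dimensional isotropic submanifold through $p$ inside $C$, and you have also misplaced the difficulty, since the involutivity you worry about is the standard part while the Legendrian upgrade is where the real work lies.
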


\begin{proof}
A concise summary of the proof is that, for a suitably small neighborhood $W$ of $p$ with $\alpha$ a contact form for $\xi|_W$, the neighborhood $C\cap W$ of $p$ in $C$ can be ``coisotropically reduced,'' yielding a projection $\pi\co C\cap W\to Z$ where $Z$ comes equipped with a contact form $\beta$ having $\pi^*\beta=\alpha|_{C\cap W}$, and then we can take $\Lambda=\pi^{-1}(\Lambda_0)$ for a Legendrian submanifold $\Lambda_0\subset Z$ that passes through $\pi(p)$.  (Below $Z$ will be constructed as a local transversal to the foliation spanned by $(T(C\cap W)\cap \xi)^{d\alpha|_{\xi}}$ and $\beta$ will just be $\alpha|_Z$.  See \cite[Theorem 5.3.30]{AM} for an analogous construction in the symplectic case, and for the contact case compare \cite[Theorem 13]{LdL}, though note that the definition of coisotropy therein is slightly different from ours.)

We now give full details.  Choose a neighborhood $W$ of $p$ and smooth functions $H_1,\ldots H_k\co W\to \R$ such that $C\cap W$ is given as a regular level set $C\cap W=\{H_1=\cdots=H_k=0\}$. (In particular we are assuming the $dH_j$ to be pointwise-linearly-independent along $C$, so $\dim C=\dim Y-k$.)  Shrinking $W$ if necessary, let $\alpha\in \Omega^1(W)$ have $\ker\alpha=\xi|_W$, and assume that $T_qC\not\subset \xi_q$ for all $q\in C\cap W$.  Note that this implies that the restrictions $dH_j|_{\xi_q}$ are linearly independent at each $q\in C\cap W$: choosing $v\in T_qC\setminus \xi_q$, a linear combination $H=\sum_jc_jH_j$ automatically has $dH(v)=0$, so if $(dH)_q|_{\xi_q}=0$ then $(dH)_q$ vanishes identically on $T_qY$ and hence the coefficients $c_j$ are all zero.  

By Lemma \ref{coisovanish}, we have $(X_{H_j})_q\in (T_qC\cap \xi_q)^{d\alpha|_{\xi_q}}$ for all $q\in C\cap W$ and each $j=1,\ldots,k$.  Because each of the $T_qC\cap \xi_q$ (for $q\in C\cap W$) has codimension $k$ in $\xi_q$, each of the $(T_qC\cap \xi_q)^{d\alpha|_{\xi_q}}$  is a $k$-dimensional subspace of $T_qC$; thus we have a rank-$k$ distribution $(TC\cap \xi)^{d\alpha|_{\xi}}$ on $C\cap W$, of which each $X_{H_j}|_{C\cap W}$ is a section.  These sections $X_{H_1}|_{C\cap W},\ldots,X_{H_k}|_{C\cap W}$ are moreover linearly independent, since $\iota_{X_{H_j}}d\alpha|_{\xi}=-dH_j|_{\xi}$ and as noted at the end of the previous paragraph the $dH_j|_{\xi}$ are linearly independent along $C\cap W$.  Thus our distribution $\mathcal{F}:=(TC\cap \xi)^{d\alpha|_{\xi}}$ on $C\cap W$ is the pointwise-linearly-independent span of the restrictions of the vector fields $X_{H_1},\ldots,X_{H_k}$ to $C\cap W$.  

We next claim that this distribution $\mathcal{F}$ is involutive.  Indeed letting $\{\cdot,\cdot\}$ denote the contact Poisson bracket as in \cite[Remark 3.5.18]{MS3}, one has $[X_{H_i},X_{H_j}]=X_{\{H_i,H_j\}}$ for any $i,j\in\{1,\ldots,k\}$, and then by \cite[Proposition 1.2]{RZ} $\{H_i,H_j\}|_{C\cap W}=0$, whence $X_{\{H_i,H_j\}}$ is a section  of $(TC\cap \xi)^{d\alpha|_{\xi}}$ by Lemma \ref{coisovanish}.

By the Frobenius theorem, the involutivity of $\mathcal{F}$ implies that, after perhaps shrinking $W$, we can find vector fields $V_i=\sum_j f_{ij}X_{H_j}$ (for $i=1,\ldots,k$ and some smooth functions $f_{ij}\co W\to \R$) which continue to span $\mathcal{F}$ pointwise in $C\cap W$ and which obey $[V_i,V_j]=0$. (Specifically the $V_i$ may be identified with coordinate vector fields for a flat chart for $(TC\cap \xi)^{d\alpha|_{\xi}}$ around $p$.)  By Lemma \ref{mult},  we have $V_i|_C=\sum_jX_{f_{ij}H_j}|_C$, so if $K_i=\sum_jf_{ij}H_j$ the functions $K_1,\ldots,K_k$ vanish along $C\cap W$ and have the property that, along $C\cap W$, the $X_{K_j}$ pairwise commute and span $\mathcal{F}=(TC\cap\xi)^{\xi|_{d\alpha}}$.

Now, possibly after shrinking $W$ again, let $Z$ be a codimension-$k$ submanifold of $C\cap W$ that passes through our point $p$ and is transverse (in $C\cap W$)  to the $k$-dimensional foliation spanned by $\mathcal{F}$. In particular for each $q\in Z$, $T_qZ\not\subset \xi_q$.   Now by Proposition \ref{dlambda}, the $(2n+1-2k)$-form $\alpha\wedge(d\alpha)^{\wedge(n-k)}$ has nowhere-vanishing restriction to $C\cap W$.  So for $q\in Z$ we can find $v\in T_qZ\setminus \xi_q$ and $w_1,\ldots,w_{2(n-k)}\in T_qC\cap \xi_q$ such that \[ 0\neq \alpha\wedge(d\alpha)^{\wedge(n-k)}(v,w_1,\ldots,w_{2n-2k})=\alpha(v)(d\alpha)^{\wedge(n-k)}(w_1,\ldots,w_{2n-2k}).\]  But since $\mathcal{F}$ is contained in and $d\alpha$-orthogonal to $TC\cap\xi$, and since $T_qC\cap \xi_q=(TZ\cap\xi_q)\oplus \mathcal{F}_q$, replacing the $w_i$ above by their projections to $T_qZ\cap \xi_q$
will not change the property that $\alpha(v)(d\alpha)^{(n-k)}(w_1,\ldots,w_{2n-2k})\neq 0$.  This proves that $\left(\alpha\wedge(d\alpha)^{(n-k)}\right)|_Z$ is a nowhere-vanishing $(2n+1-2k)$-form on $Z$.  But $\dim Z=\dim C-k=2n+1-2k$, so what we have just shown is that $\alpha|_Z$ is a contact form.

Now (for instance by the contact Darboux theorem), within the contact manifold $(Z,\ker\alpha|_Z)$ we can take a Legendrian submanifold $\Lambda_Z$ of $Z$ that passes through the point $p$.  (Thus $\dim \Lambda_Z=n-k$.)  In general letting $\phi_{H}^{t}$ denote the Hamiltonian flow of the contact Hamiltonian $H$ with respect to the contact form $\alpha$ on $W$ we now take \[ \Lambda=\left\{\phi_{K_1}^{t_1}\circ\cdots\circ\phi_{K_k}^{t_k}(x)|x\in \Lambda_Z\cap W',(t_1,\ldots,t_k)\in U\right\}\] for neighborhoods $W'$ of $p$ in $C$ and $U$ of the origin in $\R^k$ that are sufficiently small for the relevant Hamiltonian flows to be defined and for $\Lambda$ as given above to be an embedded submanifold.  The tangent space to $\Lambda$  at $\phi_{K_1}^{t_1}\circ\cdots\circ\phi_{K_k}^{t_k}(x)$ is spanned by the vector fields $X_{K_k}$ (which lie in $\ker\alpha$) together with the image under the linearization of $\phi_{K_1}^{t_1}\circ\cdots\circ\phi_{K_k}^{t_k}$ of the tangent space $T_x\Lambda_Z$, and this image is annihilated by $\alpha$ because $T_x\Lambda_Z\subset \xi_x$ while the $\phi_{K_k}^{t_k}$ are contactomorphisms.  So $\Lambda$ is an $n$-dimensional submanifold of $C$ containing $p$ with $\alpha|_{T\Lambda}=0$, as desired.
\end{proof}

\begin{remark}  
The assumption that $T_pC\not\subset \xi_p$ in Lemma \ref{legexists} cannot be completely discarded, as can already be seen in the case that $\dim Y=3$ and $\dim C=2$.  In this case the Legendrian submanifolds of  $Y$ that are contained in $C$ coincide away from the singular set $\{p\in C|T_pC\subset \xi_p\}$ with the leaves of the characteristic foliation (\emph{i.e.} the foliation tangent to $TC\cap \xi$).  If $p$ is an isolated point of this singular set then it may not be possible to find a one-dimensional smooth submanifold passing through $p$ that coincides away from the singular set with a union of such leaves---for example if the foliation has a spiral source at $p$ then any smoothly embedded arc through $p$ will have infinitely many transverse intersections with each leaf that approaches $p$.
\end{remark}

\begin{cor}\label{coleg} A submanifold $C$ of a contact manifold $(Y,\xi)$ is coisotropic if and only if there is a dense, relatively open subset $U\subset C$ such that for each $p\in U$ there exists a Legendrian submanifold $\Lambda$ of $Y$ such that $p\in \Lambda \subset C$.
\end{cor}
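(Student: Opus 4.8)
The plan is to prove both implications by combining the pointwise criteria of Proposition \ref{dlambda} with Propositions \ref{intleg} and \ref{legexists}. Fix a point of $Y$, work in a neighborhood $U_0$ on which a contact form $\alpha$ for $\xi$ exists, and let $k$ denote the codimension of $C$. The case $k=n+1$ is degenerate: by Proposition \ref{legco} coisotropy of $C$ is then equivalent to $C$ being Legendrian, and since a Legendrian contained in $C$ would be relatively open in $C$, the right-hand condition is also easily seen to amount to $C$ being Legendrian. If $k>n+1$ then $C$ is nowhere coisotropic by Proposition \ref{legco} while no $n$-dimensional Legendrian can fit inside $C$, so both sides hold only when $C=\varnothing$. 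Thus I will assume $k\leq n$, so that Proposition \ref{dlambda} applies throughout.

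For the forward implication, assume $C$ is coisotropic and set $U=\{p\in C : T_pC\not\subset\xi_p\}$. This $U$ is relatively open in $C$, since $T_pC\subset\xi_p$ is the closed condition $(\alpha|_C)_p=0$, and it is dense: if $\alpha|_C$ vanished identically on some nonempty relatively open $V\subseteq C$ then $d(\alpha|_C)$ would vanish on $V$ as well, contradicting the part of Proposition \ref{dlambda} asserting $(d(\alpha|_C))_p^{\wedge(n-k+1)}\neq 0$ at points where $(\alpha|_C)_p=0$ (here $n-k+1\geq 1$). Finally, for each $p\in U$ we have $T_pC\not\subset\xi_p$, so Proposition \ref{legexists} directly furnishes a Legendrian $\Lambda$ with $p\in\Lambda\subset C$. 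This gives the forward direction.

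For the converse, suppose $U\subseteq C$ is dense and relatively open and that every $p\in U$ lies in a Legendrian $\Lambda\subset C$. By Proposition \ref{intleg}, $C$ is coisotropic at every point of $U$; writing $Z\subseteq C$ for the set of points at which $C$ is coisotropic, we have $U\subseteq Z$, so it suffices to show $Z$ is closed, since then $Z\supseteq\bar U=C$. Put $\lambda=\alpha|_C$, and consider the continuous forms $\theta=\lambda\wedge(d\lambda)^{\wedge(n-k+1)}$ and $\eta=(d\lambda)^{\wedge(n-k+2)}$ on $C\cap U_0$. I claim both vanish on $Z$. For $\theta$ this is immediate from Proposition \ref{dlambda} at points $q$ with $\lambda_q\neq 0$, and trivial (since $\lambda_q$ is a factor) when $\lambda_q=0$. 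For $\eta$, at points $q\in Z$ with $\lambda_q=0$ it is again Proposition \ref{dlambda}; and at $q\in Z$ with $\lambda_q\neq 0$, the hyperplane $P:=T_qC\cap\xi_q$ is a codimension-$k$ coisotropic subspace of $(\xi_q,d\alpha_q)$, so by Lemma \ref{codimwedge} the rank of $d\alpha_q|_P$ is at most $2(n-k)$, whence $d\lambda_q=(d\alpha_q)|_{T_qC}$ has rank at most $2(n-k)+2$ (enlarging a skew form by one dimension raises its rank by at most two) and therefore $\eta_q=(d\lambda_q)^{\wedge(n-k+2)}=0$. Since $\theta,\eta$ are continuous and vanish on $Z$ they vanish on $\bar Z$; but by Proposition \ref{dlambda} the vanishing of $\theta_q$ (when $\lambda_q\neq 0$) or of $\eta_q$ (when $\lambda_q=0$) forces $q\in Z$, so $\bar Z\subseteq Z$ and we are done.

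The main obstacle is exactly the mismatch addressed in the last paragraph: the pointwise condition cutting out $Z$ is witnessed by genuinely different differential forms at points where $C$ is tangent to $\xi$ versus transverse to it, so one cannot conclude closedness of $Z$ from a single form. The key point making the argument go through is the linear-algebraic fact that at a coisotropic point where $C$ is transverse to $\xi$ the auxiliary form $\eta$ vanishes as well — a rank bound on $d\alpha_q|_{T_qC\cap\xi_q}$, which persists (up to $+2$) on passing to the full tangent space $T_qC$. Everything else is a routine density-and-continuity argument together with citations of the already-established Propositions \ref{dlambda}, \ref{intleg}, and \ref{legexists}.
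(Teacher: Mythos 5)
Your proof is correct. The degenerate codimension cases and the forward direction are essentially the paper's: density of the transversal locus $\{p\in C: T_pC\nsubset\xi_p\}$ follows from the first bullet of Proposition \ref{dlambda}, and Proposition \ref{legexists} then supplies the Legendrians. The converse is where you take a genuinely different route within the same framework. The paper again uses the dense open transversal locus: at the dense set of transversal coisotropic points Proposition \ref{dlambda} gives $\lambda\wedge(d\lambda)^{\wedge(n-k+1)}=0$, continuity upgrades this to an identity on all of $C\cap V$, and applying $d$ to that identity yields $(d\lambda)^{\wedge(n-k+2)}\equiv 0$, after which Proposition \ref{dlambda} is invoked once more at every point, of either type. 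You instead organize the argument around showing that the coisotropic locus $Z$ is closed, by proving that both test forms $\theta=\lambda\wedge(d\lambda)^{\wedge(n-k+1)}$ and $\eta=(d\lambda)^{\wedge(n-k+2)}$ vanish pointwise at every coisotropic point regardless of whether $C$ is tangent or transverse to $\xi$ there; the one ingredient not in the paper is your rank bound at transversal coisotropic points (Lemma \ref{codimwedge} gives rank $d\alpha_q|_{T_qC\cap\xi_q}\le 2(n-k)$, and restricting to the one-dimension-larger space $T_qC$ raises the rank by at most two, so $\eta_q=0$), which substitutes for the paper's differentiation of the identically vanishing $\theta$. Your version buys a converse that never needs density of the transversal locus and is purely pointwise-plus-continuity; the paper's version buys a shorter argument that reuses the already-established density statement and one exterior derivative. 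Both are rigorous.
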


\begin{proof}   As before write $\dim Y=2n+1$ and $k=\dim Y-\dim C$. If $k>n+1$ the statement of the corollary is vacuous since a nonempty submanifold of codimension greater than $n+1$ can neither be coisotropic nor contain a nonempty Legendrian submanifold.  If $k=n+1$ and $C$ is coisotropic then $C$ is Legendrian by Proposition \ref{legco} so we can take $U=\Lambda=C$. Conversely if $k=n+1$ and $p\in \Lambda\subset C$ with $\Lambda$ Legendrian then by dimensional considerations $\Lambda$ contains an open-in-$C$ neighborhood of $p$, so that $T_pC\subset \xi_p$. So if the set of points $p$ admitting such a Legendrian is dense in $C$ then for any open set $V$ on which $\xi$ can be written as $\ker\alpha$ it holds that $\alpha|_{T(C\cap V)}$ vanishes on a dense subset of $C\cap V$ and hence on all of $C\cap V$, whence $C$ is Legendrian and thus coisotropic.  So assume for the rest of the proof that $k\leq n$.

In this case, we claim that the set of points $p\in C$ such that $T_pC\subset \xi_p$ has empty interior.  If this were false there would be a nonempty open subset $V\subset Y$ intersecting $C$ on which $\xi|_V=\ker\alpha$ for some $\alpha\in \Omega^1(V)$ such that $\lambda:=\alpha|_{C\cap V}$ vanished throughout $C\cap V$, in which case $d\lambda$ would also vanish  throughout $C\cap V$. But by Proposition \ref{dlambda} we have $(d\lambda)^{\wedge(n-k+1)}\neq 0$ and so (since $k\leq n$) $d\lambda\neq 0$.  So indeed $U=\{p\in C|T_pC\nsubset \xi_p\}$ is open and dense in $C$ (regardless of whether $C$ is coisotropic), and by Proposition \ref{legexists} if $C$ is coisotropic then for each $p\in U$ there is a Legendrian $\Lambda$ with $p\in \Lambda\subset C$.  

Conversely, if $W\subset C$ is an open and dense subset such that each $p\in W$ admits a Legendrian $\Lambda$ with $p\in \Lambda\subset C$, then $C$ is coisotropic at $p$ for each $p\in W$ by Proposition \ref{intleg}.  So letting $U=\{p\in C|T_pC\nsubset \xi_p\}$ as above and considering any sufficiently small open $V$ and $\alpha\in \Omega^1(V)$ with $\xi|_V=\ker\alpha$, for each $p\in U\cap V\cap W\subset C$,  Proposition \ref{dlambda} shows that $\lambda_p\wedge(d\lambda)^{\wedge(n-k+1)}_{p}=0$ where $\lambda=\alpha|_{C\cap V}$.  But since $U\cap V\cap W$ is dense in $C\cap V$ (being the intersection of two open dense sets $U\cap V$ and $W\cap V$) this implies that $\lambda\wedge (d\lambda)^{\wedge(n-k+1)}=0$ everywhere on $C\cap V$, and hence also that $(d\lambda)^{\wedge(n-k+2)}=0$ everywhere on $C\cap V$.  Another appeal to Proposition \ref{dlambda} thus shows that $C\cap V$ is coisotropic at $p$ for every $p\in C\cap V$.  Allowing $V$ to vary through open subsets on which $\xi|_V$ is coorientable thus shows that $C$ is coisotropic.
\end{proof}

\begin{prop}\label{nonco}
Let $C$ be a submanifold of a contact manifold $(Y,\xi)$  and suppose that $p\in C$ is locally rigid with respect to $C$.  Then $C$ is coisotropic at $p$.
\end{prop}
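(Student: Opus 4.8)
I would prove the contrapositive: assuming $C$ is \emph{not} coisotropic at $p$, I produce in every precompact neighborhood $W$ of $p$ (with $\xi|_{\bar W}$ coorientable, $\xi=\ker\alpha$, and $C\cap W$ closed in $W$) a neighborhood $U$ of $p$ with $\bar U\subset W$ and $e^{W}_{\alpha}(U,C)=0$, which by Definition \ref{rigiddef} and Proposition \ref{basice}(i) defeats local rigidity. The key point is linear-algebraic. Writing $\omega=d\alpha_p$ and $V=T_pC\cap\xi_p$, non-coisotropy means $V^{\omega}\not\subset V$, and since $(V^{\omega})^{\omega}=V$ this is the same as $\omega|_{V^{\omega}}\not\equiv 0$; hence $V^{\omega}$ contains a symplectic $2$-plane $P_0$, so $W_0:=P_0^{\omega}$ is a codimension-two \emph{symplectic} subspace of $\xi_p$ containing $V$. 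Adjoining to $W_0$ a vector of $T_pC\setminus\xi_p$ (when $T_pC\not\subset\xi_p$) or any vector transverse to $\xi_p$ (when $T_pC\subset\xi_p$) gives a codimension-two subspace $\mathcal{T}\supset T_pC$ of $T_pY$ transverse to $\xi_p$ with $\mathcal{T}\cap\xi_p=W_0$ symplectic, and $\mathcal{T}\cap P_0=\{0\}$. Since $T_pC\cap P_0=\{0\}$, near $p$ the manifold $C$ is a graph over a complement of $P_0$, so it extends to a codimension-two submanifold $N_0\supset C$ with $T_pN_0=\mathcal{T}$; openness of the contact-submanifold condition (satisfied at $p$ because $\mathcal{T}\cap\xi_p$ is symplectic) then makes $N_0$ a contact submanifold near $p$. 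This is where the two qualitatively different types of point $p$ — those with $T_pC\subset\xi_p$ and those at which $C$ is transverse to $\xi$ — are absorbed, and, it turns out, uniformly.

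Next I would invoke the neighborhood theorem for codimension-two contact submanifolds (a relative contact Darboux theorem, in the spirit of the Legendrian neighborhood theorem used for Corollary \ref{leglr}): there is a contactomorphism from a neighborhood of $p$ onto a neighborhood of the origin in $(\R^{2n+1},dz-\sum_j y_j dx_j)$ carrying $p$ to $0$ and $N_0$ to $Z:=\{x_n=y_n=0\}$. By Proposition \ref{basice}(ii) and (v) it then suffices to show that, in this model, a fixed small ball $\bar B_{\rho}(0)\subset\R^{2n+1}$ can be displaced from $Z$ with arbitrarily small Shelukhin energy — the contact analogue of the fact that a ball has vanishing displacement energy from a point. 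For this, given $\eta>0$, I would pick a compactly supported time-dependent Hamiltonian $(h_t)$ on the $(x_n,y_n)$-plane whose time-one flow moves the planar disk $\bar B^{2}_{\rho}(0)$ off the origin with $\int_0^1\max|h_t|\,dt<\eta$ (drag the point $0$ out of the $\rho$-disk along an arbitrarily thin tube about an arc), and set $H(t,x,y,z)=\chi\cdot h_t(x_n,y_n)$ with $\chi$ a cutoff equal to $1$ on the bounded region swept by the flow and supported inside (the image of) $W$. The contact Hamiltonian flow of $H$ moves the $(x_n,y_n)$-coordinates of points of $\bar B_{\rho}(0)$ exactly by the planar flow of $(h_t)$, hence off the origin, while keeping the remaining $x_i,y_i$ fixed and shearing $z$ only boundedly; so $\phi_H^{1}(\bar B_{\rho}(0))\cap Z=\varnothing$, whereas $\int_0^1\max|\alpha(V_t)|\,dt=\int_0^1\max|H(t,\cdot)|\,dt\le\int_0^1\max|h_t|\,dt<\eta$. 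Letting $\eta\to 0$ yields $e^{W}_{\alpha}(U,C)=0$ for $U$ the preimage of $B_\rho(0)$, so $p$ is not locally rigid.

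The step I expect to require the most care is the construction of the normal form near $p$ — producing the contact submanifold $N_0\supset C$ with the prescribed tangent space and applying the relative Darboux theorem — together with the verification that the model displacement, which acts only on the two ``symplectic'' coordinates, genuinely carries the whole ball off $C$ (in particular controlling the induced shear in the $z$-direction); the remaining ingredients are elementary. This is the contact counterpart of the cheap-displacement arguments for non-coisotropic submanifolds in \cite{U1} and \cite{RZ}, and combined with Corollary \ref{leglr} and Corollary \ref{coleg} it supplies the characterization of coisotropic submanifolds by local rigidity that proves Theorem \ref{main}.
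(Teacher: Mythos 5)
Your proposal is correct, but it takes a genuinely different route from the paper's proof. The paper never builds a normal form: it takes $v\in (T_pC\cap\xi_p)^{d\alpha_p|_{\xi_p}}\setminus T_pC$, chooses $H$ with $H|_C=0$ and $dH_p(v)>0$, observes (as in Lemma \ref{coisovanish}) that $X_H$ then lies in $\xi$ along $C$ while $(X_H)_p\notin T_pC$, so that $\phi_H^1(\bar U)\cap C=\varnothing$ for a small neighborhood $U$ of $p$; the energy is then made small not by shrinking $H$ but by composing with the inverses of the flows of truncations $\beta_k\circ H$ (which fix $C$), the resulting disjoining isotopies being generated by contact Hamiltonians of size $O(1/k)$ up to the conformal factors $1/f_{k,t}$ of the truncated flows, which are bounded via an estimate on $\iota_{R_\alpha}dH$ --- that conformal-factor bound is the distinctly contact wrinkle of the paper's argument. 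You instead push the work into a normal form: your linear algebra is sound (writing $V=T_pC\cap\xi_p$ and $\omega=d\alpha_p|_{\xi_p}$, the radical of $\omega|_{V^{\omega}}$ is $V\cap V^{\omega}$, so a symplectic $2$-plane $P_0\subset V^{\omega}$ automatically misses $T_pC$), the extension of $C$ near $p$ to a codimension-two contact submanifold $N_0$ with $T_pN_0=\mathcal{T}$ is a standard graph construction, and the tubular neighborhood theorem for contact submanifolds (a standard result, though not used anywhere in the paper) reduces the problem to displacing a small ball from $\{x_n=y_n=0\}$, which your $z$-independent Hamiltonian $\chi\,h_t(x_n,y_n)$ does with energy at most $\int_0^1\max|h_t|\,dt$, the $(x_n,y_n)$-evolution being exactly the planar flow by (\ref{hamform}). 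What each approach buys: the paper's is self-contained (no neighborhood theorem) at the price of the truncation trick and the conformal-factor estimate; yours avoids conformal factors entirely (the model Hamiltonian is $z$-independent) and gives a cleaner geometric picture --- every non-coisotropic point of $C$ sits on a local codimension-two contact submanifold containing $C$, from which balls are cheaply displaceable in the two symplectic normal directions --- at the price of invoking the contact neighborhood theorem and some routine bookkeeping you should make explicit: that $C\cap W_1\subset N_0$ for the small chart $W_1$ (so disjoining from the model $Z$ really disjoins from $C\cap W$), and that the finger's support and the swept region, including the bounded $z$-shear, fit inside the chart and inside the given $W$.
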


\begin{proof}
Let $W$ be a neighborhood of $p$ that is sufficiently small for $C\cap W$ to be closed as a subset of $W$ and for there to be a contact form $\alpha$  for $\xi|_W$.  Suppose that $C$ is not coisotropic at $p$, so that there is $v\in \xi_p$ such that $v\in (T_pC\cap \xi_p)^{d\alpha|_{\xi_p}}\subset \xi_p$ while $v\notin T_pC$.  We will find a neighborhood $U$ of $p$ with $\bar{U}\subset W$ such that $e_{\alpha}^{W}(U,C)=0$; in view of Proposition \ref{basice}(iv) and the fact that $W$ is arbitrary subject to being sufficiently small, this will prove that $p$ is not locally rigid with respect to $C$.

To do this, following the strategy of \cite[Lemma 4.3]{U1} and \cite[Proposition 7.3]{RZ}, let $H\co Y\to \R$ be a smooth function having compact support contained in $W$ such that $H|_C=0$ and $dH_p(v)>0$, as is possible since $v$ is not tangent to $C$.  The contact Hamiltonian vector field $X_H$ of $H$ on $W$ with respect to $\alpha$ will then obey $\alpha(X_H)=0$ at all points of $C\cap W$ and, using that $v\in \xi_p$, $d\alpha(X_H,v)=-dH_p(v)\neq 0$. Thus $(X_H)_p\in \xi_p$ but $(X_H)_p\notin T_pC\cap \xi_p$, since $d\alpha(\cdot,v)$ restricts to zero on $T_pC\cap\xi_p$.  Thus for sufficiently small positive $t$ we will have $\phi_{H}^{t}(p)\notin C$; replacing $H$ by $H/t$ if necessary we may as well assume that $\phi_{H}^{1}(p)\notin C$.  Since $C\cap W$ is closed as a subset of $W$ this implies that there is an open set $U$ around $p$, which we can assume to obey $\bar{U}\subset W$, such that $\phi_{H}^{1}(\overline{U})\cap C=\varnothing$.  The proof will be complete when we show that $e_{\alpha}^{W}(U,C)=0$.

Choose a sequence of smooth functions $\beta_k\co \R\to\R$ such that: \begin{itemize}
\item $\beta_k(s)=s$ whenever $|s|\geq 2/k$;
\item $\beta_k(s)=0$ whenever $|s|\leq 1/k$; and 
\item $0\leq \beta'_k(s)\leq 3$ for all $s$.
\end{itemize}

The functions $\beta_k\circ H$ are supported in $W$ and each vanish throughout a ($k$-dependent) neighborhood of $C$, and so $\phi_{\beta_k\circ H}^{t}$ will restrict to the identity on $C$ for each $t$ and $k$.  So the fact that $\phi_{H}^{1}(\overline{U})\cap C=\varnothing$ implies that \[ (\phi_{\beta_k\circ H}^{1})^{-1}\circ\phi_{H}^{1}(\overline{U})\cap C=(\phi_{\beta_k\circ H}^{1})^{-1}(\phi_{H}^{1}(\overline{U})\cap C)=\varnothing.\]  If we write $f_{k,t}\co W\to \R$ for the smooth functions such that $\phi_{\beta_{k}\circ H}^{t*}\alpha=f_{k,t}\alpha$, then by a standard calculation as in \cite[Lemma 2.2]{MSp} the isotopy $\{(\phi_{\beta_k\circ H}^{t})^{-1}\circ\phi_{H}^{t}\}_{t\in [0,1]}$ is generated by the unique contact vector field $(V_{k,t})_{t\in [0,1]}$ that obeys \begin{equation}\label{betacompV} \alpha(V_{k,t})=\frac{1}{f_{k,t}}\left(H-\beta_k\circ H\right)\circ \phi_{\beta_k\circ H}^{t}.
\end{equation}

This is slightly more complicated than the situation in \cite[Proof of Proposition 7.3]{RZ} because the order in which we need to compose our diffeomorphisms is opposite to theirs, leading to a factor $\frac{1}{f_{k,t}}$ in (\ref{betacompV}) that depends on $k$, but these factors can be estimated as follows.  The Lie derivative of $\alpha$ along the Hamiltonian vector field  $X_{\beta_k\circ H}$ is given by \[ L_{X_{\beta_k\circ H}}\alpha=d(\beta_k\circ H)+(\iota_{R_{\alpha}}d(\beta_k\circ H))\alpha-d(\beta_k\circ H)=\left((\beta'_k\circ H)\iota_{R_{\alpha}}dH\right)\alpha \] where $R_{\alpha}$ is the Reeb vector field of $\alpha$, and thus we have \[ \log f_{k,t}=\int_{0}^{t}\left((\beta'_k\circ H)\iota_{R_{\alpha}}dH\right)\circ \phi_{\beta_k\circ H}^{s}ds.\]  So choosing $M>0$ such that $|\iota_{R_{\alpha}}dH|\leq M$ everywhere on $W$, our assumption that $0\leq \beta'_k\leq 3$ shows that we have \[ |\log f_{k,t}(x)|\leq 3M\mbox{ for all }k\in \Z_+,t\in [0,1],x\in Y.\] Moreover our construction of $\beta_k$ also ensures that $|H-\beta_k\circ H|\leq \frac{2}{k}$ everywhere.  Hence (\ref{betacompV}) yields \[ |\alpha(V_{k,t})|\leq \frac{2e^{3M}}{k} \] everywhere, where the constant $M$ depends on $H$ and $\alpha$ but not on $k$.  Since the time-one flow $(\phi_{\beta_k\circ H}^{1})^{-1}\circ \phi_{H}^{1}$ disjoins $\bar{U}$ from $N$ this proves that $e_{\alpha}^{W}(U,C)\leq \frac{2e^{3M}}{k}$ for all positive integers $k$, and hence that $e_{\alpha}^{W}(U,C)=0$, as desired.
\end{proof}

\begin{cor}\label{corigid}
Let $C$ be a submanifold of a contact manifold $(Y,\xi)$.  Then $C$ is coisotropic if and only if there is a relatively open and dense subset $U\subset C$ such that every point $p\in U$ is locally rigid with respect to $C$.
\end{cor}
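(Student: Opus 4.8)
The plan is to derive Corollary \ref{corigid} as a formal consequence of results already in hand: Corollary \ref{leglr} (every point of a Legendrian submanifold is locally rigid with respect to it), Proposition \ref{nonco} (local rigidity at $p$ forces coisotropy at $p$), Corollary \ref{coleg} (coisotropy of $C$ is equivalent to the existence of a relatively open dense subset of $C$ all of whose points lie on Legendrian submanifolds contained in $C$), and the monotonicity statement Proposition \ref{basicr}(i). As in Corollary \ref{coleg} I would set $\dim Y=2n+1$ and $k=\dim Y-\dim C$.

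For the forward implication, assume $C$ is coisotropic. Corollary \ref{coleg} produces a relatively open dense $U\subset C$ such that each $p\in U$ lies on a Legendrian submanifold $\Lambda$ of $Y$ with $\Lambda\subset C$. By Corollary \ref{leglr}, $p$ is locally rigid with respect to $\Lambda$, and since $\Lambda\subset C$, Proposition \ref{basicr}(i) upgrades this to local rigidity of $p$ with respect to $C$. Thus this $U$ is the required open dense locus of locally rigid points, uniformly in the codimension $k$, with no case analysis needed.

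For the converse, suppose $U\subset C$ is relatively open, dense, and consists of points that are locally rigid with respect to $C$; Proposition \ref{nonco} then shows that $C$ is coisotropic at every point of $U$, and it remains to promote coisotropy from $U$ to all of $C$. The cases $k>n+1$ and $k=n+1$ are dispatched immediately: when $k>n+1$, Proposition \ref{legco} forbids coisotropy at any point, so density forces $U=\varnothing$ and hence $C=\varnothing$; when $k=n+1$, coisotropy of $C$ at the points of the dense set $U$ forces $T_pC\subset\xi_p$ there (Proposition \ref{legco}), hence $\alpha|_{TC}$ vanishes on a dense subset of, and therefore all of, $C\cap V$ for every open $V$ with $\xi|_V=\ker\alpha$, so $C$ is Legendrian. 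When $k\le n$, I would first verify that the transverse locus $U':=\{p\in C:T_pC\nsubset\xi_p\}$ is relatively open and dense in $C$: if it were not dense, then on some nonempty relatively open $C\cap V$ the form $\lambda:=\alpha|_{C\cap V}$, and hence $d\lambda$, would vanish identically, contradicting $(d\lambda)^{\wedge(n-k+1)}\neq 0$ from Proposition \ref{dlambda}. Then $U\cap U'$ is relatively open and dense, and since it is relatively open, $C$ is coisotropic on a neighborhood in $C$ of each of its points --- which is all that the (local) proof of Proposition \ref{legexists} requires --- so that proposition supplies for each $p\in U\cap U'$ a Legendrian submanifold $\Lambda$ of $Y$ with $p\in\Lambda\subset C$. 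Corollary \ref{coleg} now concludes that $C$ is coisotropic.

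The one step that is not mere bookkeeping is this promotion in the range $k\le n$: the hypothesis delivers coisotropy only on the dense set $U$, so to invoke Proposition \ref{legexists} (and thence Corollary \ref{coleg}) one must intersect with the transverse locus $U'$ and check that $U'$ is still dense (a short reuse of Proposition \ref{dlambda}), and must note that Proposition \ref{legexists} is genuinely local in $C$ so that coisotropy merely on the relatively open set $U\cap U'$ suffices. Alternatively one can bypass Corollary \ref{coleg} and argue exactly as in its proof: coisotropy of $C$ on the dense $U$ makes the smooth form $\lambda\wedge(d\lambda)^{\wedge(n-k+1)}$ vanish identically on each such $C\cap V$, hence so does its exterior derivative $(d\lambda)^{\wedge(n-k+2)}$, and Proposition \ref{dlambda} then gives coisotropy at every point of $C$.
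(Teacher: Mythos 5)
Your proposal is correct, and its forward direction (Corollary \ref{coleg} plus Corollary \ref{leglr} plus Proposition \ref{basicr}(i)) is exactly the paper's. For the converse the ingredients are the same but the organization differs: the paper argues contrapositively, showing via Proposition \ref{dlambda} (the derivative of $\lambda\wedge(d\lambda)^{\wedge(n-k+1)}$ trick) that if $C$ fails to be coisotropic somewhere then the non-coisotropic locus contains a nonempty relatively open set, on which Proposition \ref{nonco} rules out local rigidity, so no dense locally rigid set can exist. Your primary route instead runs forward --- Proposition \ref{nonco} gives coisotropy on the dense set $U$, you intersect with the transverse locus $U'$ and feed the result back through Proposition \ref{legexists} and Corollary \ref{coleg} --- which works but costs you the extra observation that Proposition \ref{legexists}, as stated for globally coisotropic $C$, has a proof that only uses coisotropy on $C\cap W$ for small $W$; you flag this correctly, but it is a genuine (if mild) strengthening that the paper never needs. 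Your stated alternative (vanishing of $\lambda\wedge(d\lambda)^{\wedge(n-k+1)}$ on the dense set, continuity, then $d$ and Proposition \ref{dlambda}) is in substance the paper's own argument, just run directly rather than contrapositively, and is the cleaner of your two options. One point in your favor: you treat the codimensions $k=n+1$ and $k>n+1$ explicitly via Proposition \ref{legco}, whereas the paper's converse quietly leans on Proposition \ref{dlambda}, which is only stated for $k\le n$, so your case analysis actually patches a small gap in the published argument's literal reading.
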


\begin{proof}
If $C$ is coisotropic then all of the points in the relatively open and dense subset from Corollary \ref{coleg} will be locally rigid by Corollary \ref{leglr} and Proposition  \ref{basicr}(i).  On the other hand if $C$ is not coisotropic we claim that the set of points at which it fails to be coisotropic contains a nonempty open set. Let $V$ be an open subset of $Y$ such that $\xi|_V=\ker\alpha$ with $\alpha\in \Omega^1(V)$ and such that $C\cap V$ contains a point at which $C$ is not coisotropic.  Write $\lambda=\alpha|_{C\cap V}$ and $k=\dim Y-\dim C$. If we had $\lambda_p\wedge (d\lambda)_p^{\wedge(n-k+1)}= 0$ at every point of $C\cap V$ then taking a derivative would show $(d\lambda)^{\wedge(n-k+2)}=0$ throughout $C\cap V$ which is impossible by Proposition \ref{dlambda} and our assumption on $V$. So there must be some point $p\in C\cap V$ at which $\lambda_p\wedge (d\lambda)_p^{\wedge(n-k+1)}\neq  0$.  But then $\lambda\wedge (d\lambda)^{(n-k+1)}$ (and hence also $\lambda$) is nowhere vanishing on a neighborhood $W$ of $p$ in $C$, and so for each $q\in W$, $C$ is not coisotropic at $q$ by another application of Proposition \ref{dlambda}.  By Proposition \ref{nonco} this implies that, for each $q$ in the nonempty relatively open set $W$, $q$ is not locally rigid with respect to $C$; thus $C$ cannot contain a dense set of points each of which is locally rigid.
\end{proof}


\begin{proof}[Proof of Theorem \ref{main}]
Let $\psi\co Y\to Y$ be a contact homeomorphism and $C\subset Y$ a coisotropic submanifold such that $\psi(C)$ is a smooth submanifold and $\psi$ is bounded below near every point of $C$.  By Corollary \ref{corigid}, there is a dense and relatively open subset $U\subset C$ such that each point of $C$ is locally rigid with respect to $C$.  Then Proposition \ref{rigpres} shows that each point of $\psi(U)\subset \psi(C)$ (which is open and dense since $\psi|_C$ is a homeomorphism) is likewise locally rigid with respect to $\psi(C)$.  But then $\psi(C)$ is coisotropic by Corollary \ref{corigid}.
\end{proof}

\section{Instability of coisotropy at a  point} \label{exsect}

This section contains the examples which prove Theorem \ref{exprop}, showing that a contact homeomorphism $\psi$ can map a submanifold that is not coisotropic at some point $p$ to one which is coisotropic at $\psi(p)$.  Our constructions are local in nature, taking place in an open subset of $\R^{2n+1}$ in Section \ref{bosect} and in an open subset of the one-jet bundle of the $n$-torus in Section \ref{accel}; we always use the contact form \[ \alpha=dz-\sum_{j=1}^{n}y_jdx_j\] in either case (with $x_j$ valued in $\R$ in Section \ref{bosect} and in $\R/\Z$ in Section \ref{accel}).  The Hamiltonian vector field $X_H$ of a smooth function $H$ with respect to this contact form $\alpha$ is then given by \begin{equation}\label{hamform} X_H=-\sum_j\frac{\partial H}{\partial y_j}\partial_{x_j}+\sum_j\left(\frac{\partial H}{\partial x_j}+y_j\frac{\partial H}{\partial z}\right)\partial_{y_j}+\left(H-\sum_jy_j\frac{\partial H}{\partial y_j}\right)\partial_z.\end{equation}  One then has $L_{X_H}\alpha=\frac{\partial H}{\partial z}\alpha$, and so if $\phi_{H}^{t}$ is the time-$t$ map of the Hamiltonian flow of $H$ then the function $f$ obeying $\phi_{H}^{1*}\alpha=f\alpha$ is given by \begin{equation}\label{expconf} f(p)=\exp\left(\int_{0}^{1}\frac{\partial H}{\partial z}(\phi_{H}^{t}(p))dt\right).\end{equation}

\subsection{The Buhovsky-Opshtein construction}\label{bosect}

\cite[Corollary 4.4]{BO} exhibits compactly supported symplectic homeomorphisms of $\mathbb{R}^{2n}$ that map the symplectic subspace $\{(0,0)\}\times \R^{2n-2}$ to a smooth, non-symplectic submanifold---more specifically, to $\{(F(\vec{z}),0,\vec{z})|\vec{z}\in\R^{2n-2}\}$ where $F\co \R^{2n-2}\to \R$ is a continuous function whose graph is smooth and has vertical tangencies.  As we now show, Buhovsky and Opshtein's construction can be adapted to the contact context.

\begin{prop}\label{boprop}
Let $U\subset \R^{2n-1}$ be an open ball, and $F\co U\to \R$ a continuous function with compact support such that $\max_U|F|<1$.  Then for any $\delta>0$ there is a sequence of uniformly compactly supported contactomorphisms $\psi_m\co (-1,1)\times (-\delta,\delta)\times U\to (-1,1)\times (-\delta,\delta)\times U$ that converges uniformly to a homeomorphism $\psi$ of $(-1,1)\times(-\delta,\delta)\times U$ such that, for all $w\in U$, we have \[ \psi(0,0,w)=\left(F(w),0,w\right).\]  (Here the contact structure on $(-1,1)\times (-\delta,\delta)\times U$ is the kernel of $\alpha=dz-\sum_{j=1}^{n}y_jdx_j$, with $(x_1,y_1)$ the coordinates on $(-1,1)\times (-\delta,\delta)$ and $(x_2,y_2,\ldots,x_n,y_n,z)$ the coordinates on $U$.)  Moreover there is a constant $C>1$ such that the functions $f_m$ characterized by $\psi_{m}^{*}\alpha=f_m\alpha$ obey $\frac{1}{C}<\max|f_m|<C$.
\end{prop}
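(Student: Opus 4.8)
The plan is to run the construction of \cite[Section~4]{BO} with symplectomorphisms replaced by contactomorphisms and then verify the one genuinely new point, the bound on the conformal factors. The structural fact that makes the symplectic construction portable is that $d\alpha=dx_1\wedge dy_1-\sum_{j\ge 2}dy_j\wedge dx_j$ restricts to the standard area form on each $(x_1,y_1)$-plane, so the area-preserving ``shear''-type maps of the $(x_1,y_1)$-coordinates that Buhovsky and Opshtein use---with the remaining coordinates serving as parameters---have contactomorphism analogues, realized as time-one flows of compactly supported contact Hamiltonians via \eqref{hamform}. Following \cite[Section~4]{BO}, I would fix smooth compactly supported $F_m\co U\to\R$ with $\max|F_m|<1$, $F_1\equiv 0$, $F_m\to F$ uniformly, and $\|F_m-F_{m-1}\|_{C^0}$ summable, and build $\psi_m=\theta_m\circ\cdots\circ\theta_1$, where $\theta_k$ is the time-one flow of a contact Hamiltonian $H_k$ pushing the current position of the submanifold from near $\{(F_{k-1}(w),0,w)\}$ toward $\{(F_k(w),0,w)\}$. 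The hypothesis $\max|F|<1$, together with standard cutoffs, keeps $x_1$ inside $(-1,1)$ and confines every $\theta_k$, hence every $\psi_m$, to a single compact subset of $(-1,1)\times(-\delta,\delta)\times U$. The estimates of \cite[Section~4]{BO} showing that $\{\psi_m\}$ is Cauchy in the $C^0$-metric---the key being that the size of $\theta_k$ near the submanifold is governed by $\|F_m-F_{m-1}\|_{C^0}$ and not by the (unboundedly growing) $C^1$-norms of the $F_m$---transcribe verbatim, as do the analogous estimates for $\theta_k^{-1}$; thus $\psi_m$ converges uniformly, with uniformly convergent inverses, to a homeomorphism $\psi$ with $\psi(0,0,w)=(F(w),0,w)$.

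It remains to bound the conformal factors $f_m$, which is where the contact case demands something beyond \cite{BO}. By \eqref{expconf} the conformal factor of $\theta_k$ equals $\exp\!\left(\int_0^1\tfrac{\partial H_k}{\partial z}\circ\phi_{H_k}^{t}\,dt\right)$, and since the conformal factor of a composition of contactomorphisms is the ordered product of the pulled-back factors of the pieces, one has $\left|\log|f_m|\right|\le\sum_{k\ge 1}\left\|\tfrac{\partial H_k}{\partial z}\right\|_{C^0}$ for every $m$. So it suffices to organize the construction so that $\sum_k\left\|\tfrac{\partial H_k}{\partial z}\right\|_{C^0}=:c<\infty$: then $e^{-c}\le|f_m|\le e^{c}$ for all $m$ and the assertion holds with $C=e^{c}>1$. (That $C=1$ is genuinely unattainable when $F$ depends on $z$ is clear, since a strict contactomorphism has $z$-independent ``horizontal part'' and so cannot converge to a map with $\psi(0,0,w)=(F(w),0,w)$.)

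The main obstacle is exactly the reconciliation of the two preceding paragraphs. Uniform convergence of the $\psi_m$ forces the generating Hamiltonians $H_k$ to vary wildly---large $C^1$-norms---near $\{x_1=y_1=0\}$, and this wildness is essential: in the eventual application it is what allows the image of a suitable submanifold of $\{x_1=y_1=0\}$ to be smooth and yet tangent to $\xi$ at a point where the submanifold itself is transverse to $\xi$. On the other hand the conformal-factor bound requires the $z$-derivatives of the $H_k$ to stay $C^0$-small and summable. The $z$-coordinate is only one of the $2n-1$ parameter directions, so one expects to be able to route the wildness of the $H_k$ into the $(x_1,y_1,x_2,\dots,y_n)$-directions while keeping $\partial H_k/\partial z$ controlled---possibly after a modification of the specific Hamiltonians chosen in \cite[Section~4]{BO}---but making this precise, rather than the (now essentially bookkeeping) verification of uniform compact support, $C^0$-convergence, and the homeomorphism property, is where the real work lies.
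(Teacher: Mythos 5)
Your outline matches the overall strategy of the paper (adapt the Buhovsky--Opshtein scheme, compose flows of contact Hamiltonians, control the conformal factor of the composition through $\sum_k\|\partial H_k/\partial z\|_{C^0}$ via \eqref{expconf}), but the proposal stops exactly at the point that constitutes the actual proof. You state yourself that reconciling the $C^0$-convergence with the summability of the $z$-derivatives is ``where the real work lies'' and leave it unresolved; this is not a bookkeeping issue but the central construction. The resolution in the paper is a concrete choice of Hamiltonians, $H_{k\ell}(x_1,y_1,w)=u(x_1)\frac{v(\ell y_1)}{\ell}\,G_k(w)$ with $G_k=F_k-F_{k-1}$, $u\equiv 1$ on $[-1+\ep,1-\ep]$, $v(0)=0$, $v'(0)=-1$, and an inductively chosen large parameter $\ell=\ell_k$. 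The compressed-and-rescaled profile $\frac{v(\ell y_1)}{\ell}$ is the key: it makes $H_{k\ell}$, its $z$-derivative, and every component of $X_{H_{k\ell}}$ except the $\partial_{x_1}$-component of size $O(1/\ell)$, so after $G_k$ is fixed one simply takes $\ell_k$ so large that $\|\partial H_{k\ell_k}/\partial z\|_{C^0}<1/k^2$ (summable, giving the two-sided bound $e^{\pm\pi^2/6}$ on the conformal factors), while the $\partial_{x_1}$-component $-u(x_1)v'(\ell y_1)G_k$ is bounded by a constant times $2^{-k}$ uniformly in $\ell$ and restricts on $\{y_1=0\}$ to $u(x_1)G_k$, which is exactly what moves $(0,0,w)$ to $(F_m(w),0,w)$ and makes $\{\psi_m\}$ uniformly Cauchy. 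In other words, no modification of the Buhovsky--Opshtein Hamiltonians is needed; the $1/\ell$ factor that you would have to discover delivers the conformal-factor bound for free.

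Two further points. First, your diagnosis of the tension is slightly off: in this construction the Hamiltonians do not have large $C^1$-norms near $\{x_1=y_1=0\}$ --- their $C^1$-norms are uniformly bounded and even summable in $k$; it is only the second and higher derivatives (e.g.\ $\ell\,u(x_1)v''(\ell y_1)G_k$) that blow up as $\ell_k\to\infty$, which is what produces a non-smooth limit without obstructing any of the estimates. Second, the convergence of $\psi_m$ to a \emph{homeomorphism} is not automatic from the Cauchy estimate alone: the paper additionally imposes the nested support condition that $\mathrm{supp}(\phi^{1}_{H_{k\ell_k}})$ be contained in the image under $\psi_{k-1}$ of $\{|y_1|<\delta/k\}$, which guarantees that points with $y_1\neq 0$ are moved by only finitely many of the factors; this (together with the injectivity argument quoted from \cite{BO}) is what makes the limit injective, and your sketch does not address it.
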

 
\begin{proof}
We closely follow \cite[Proof of Lemma 4.3]{BO}.  First construct a sequence of smooth functions $\{F_k\}_{k=0}^{\infty}$ on $U$ such that; \begin{itemize}\item For some compact subset $K\subset U$, each $F_k$ has support contained in $K$;
\item For some $\ep>0$, $\max_{k}\max_U|F_k|<1-\ep$;
\item $F_k\to F$ uniformly; and 
\item $F_0\equiv 0$, and $\max_U |F_k-F_{k-1}|<\frac{1}{2^k}$.\end{itemize}
Also let us abbreviate \[ G_k=F_k-F_{k-1},\quad\mbox{so }F=\sum_{k=1}^{\infty}G_k.\]

Now choose smooth functions $u,v\co \R\to \R$, with $u$ having compact support in $(-1,1)$ and $v$ having compact support in $(-\delta,\delta)$, such that: \[ u|_{[-1+\ep,1-\ep]}\equiv 1,\quad v(0)=0,\,\,v'(0)=-1 \] and, for all positive integers $k,\ell$, define \[ H_{k\ell}(x_1,y_1,x_2,\ldots,y_n,z)=u(x_1)\frac{v(\ell y_1)}{\ell}G_k(x_2,\ldots,y_n,z).\]  Let $V_{G_k}$ denote the Hamiltonian vector field of the function $G_k$ on $U$ with respect to the contact form $dz-\sum_{j=2}^{n}y_jdx_j$.  Then the Hamiltonian vector field of $H_{k\ell}$ on $(-1,1)\times(-\delta,\delta)\times U$ is  \begin{align*} X_{H_{k\ell}}=&u(x_1)\frac{v(\ell y_1)}{\ell}V_{G_k}-u(x_1)v'(\ell y_1)y_1G_k\partial_z \\ & -u(x_1)v'(\ell y_1)G_k\partial_{x_1}+\frac{v(\ell y_1)}{\ell}\left(u'(x_1)G_k+u(x_1)y_1\frac{\partial G_k}{\partial z}\right)\partial_{y_1}.\end{align*}

In particular this vector field is tangent to the hypersurface $\{y_1=0\}$, and restricts to that hypersurface as $u(x_1)G_k\partial_{x_1}$.
As in \cite{BO} the desired contactomorphisms $\psi_m$ will be given by \begin{equation}\label{psim} \psi_m=\phi_{H_{m\ell_m}}^{1}\circ\cdots\circ\phi_{H_{1\ell_1}}^{1} \end{equation} for a suitably chosen sequence $\{\ell_k\}_{k=1}^{\infty}$.  To describe the inductive procedure for choosing the $\ell_k$, note first that because all terms in the formula for $X_{H_{k\ell}}$ except the coefficient of $\partial_{x_1}$ are bounded by a $k$-dependent constant times $\frac{1}{\ell}$, and since $\max|G_k|<2^{-k}$, for all sufficiently large values of $\ell_k$ it will hold that $\max\|X_{H_{k\ell_k}}\|<C2^{-k}$ where the constant $C$ depends only on the auxiliary functions $u$ and $v$.  Also for all sufficiently large values of $\ell_k$ it will hold that \[ \max\left|\frac{\partial H_{k\ell_k}}{\partial z}\right|\leq\frac{1}{\ell_k}\max\left|\frac{\partial G_k}{\partial z}\right|<\frac{1}{k^2}.\]  Moreover since $H_{k\ell}$ has support contained in the region $\{|y_1|<\frac{\delta}{\ell}\}$ we can inductively choose the $\ell_k$ sufficiently large that, in addition to having $\max\|X_{H_{k\ell_k}}\|<C2^{-k}$ and $\max\left|\frac{\partial H_{k\ell_k}}{\partial z}\right|<\frac{1}{k^2}$, we have \begin{equation}\label{suppcond} \mathrm{supp}(\phi_{H_{k\ell_k}}^{1})\subset \left(\phi_{H_{k-1\ell_{k-1}}}^{1}\circ\cdots\circ\phi_{H_{1\ell_1}}^{1}\right)\left(\left\{|y_1|<\frac{\delta}{k}\right\}\right).\end{equation}

For such a choice of $\{\ell_k\}_{k=1}^{\infty}$, if we define $\psi_m$ as in (\ref{psim}) then (\ref{suppcond}) implies that if $y_1\neq 0$ then $\psi_m(x_1,y_1,w)$ is independent of $m$ once $m$ is sufficiently large.  On the other hand since $u|_{[-1+\ep,1-\ep]}\equiv 1$ and the restriction of $X_{H_{k\ell_k}}$ to $\{y_1=0\}$ is $u(x_1)G_k\partial_{x_1}$ we have, for all $w\in U$, \[ \psi_m(0,0,w)=\left(\sum_{k=1}^{m}G_k(w),0,w\right)=\left(F_m(w),0,w\right).\]  The estimate $\max\left|\frac{\partial H_{k\ell_k}}{\partial z}\right|<\frac{1}{k^2}$ implies, as in (\ref{expconf}), that the conformal factor of the contactomorphism $\phi_{H_{k\ell_k}}^{1}$ is bounded between $e^{-\frac{1}{k^2}}$ and $e^{\frac{1}{k^2}}$, implying an $m$-independent bound between $e^{-\frac{\pi^2}{6}}$ and $e^{\frac{\pi^2}{6}}$ for the conformal factors of the $\psi_m$.  Finally, the bound $\max\|X_{H_{k\ell_k}}\|<C2^{-k}$ implies that the sequence $\{\psi_m\}_{m=1}^{\infty}$ is uniformly Cauchy, and so uniformly converges to a map $\psi\co (-1,1)\times(-\delta,\delta)\times U\to (-1,1)\times (-\delta,\delta)\times U$.  Since $\psi_m(0,0,w)=(F_m(w),0,w)$ we indeed have $\psi(0,0,w)=(F(w),0,w)$.  That $\psi$ is injective (from which it easily follows that it is a homeomorphism since it is continuous and is the identity outside a compact subset of $(-1,1)\times (-\delta,\delta)\times U$ where $U$ is a ball) follows by the same argument that is used in \cite[Proof of Lemma 4.3]{BO}.  
\end{proof}

\begin{cor}\label{bocor}
For any $k\in\{2,\ldots,n+1\}$ and any $(2n+1)$-dimensional contact manifold $(Y,\xi)$ there exist a contact homeomorphism $\psi\co Y\to Y$, a codimension-$k$ submanifold $N\subset Y$, and a point $p\in N$ such that $N$ is not coisotropic at $p$ but $\psi(N)$ is smooth and is coisotropic at $\psi(p)$. Moreover $\psi$ can be arranged to be bounded both above and below near every point of $Y$.
\end{cor}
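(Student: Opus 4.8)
The plan is to realize everything inside a single contact Darboux chart, feeding the Buhovsky--Opshtein-type contactomorphisms of Proposition \ref{boprop} one carefully chosen function $F$, and then extending the resulting homeomorphism by the identity over all of $Y$. First I would fix, via the contact Darboux theorem, a contact embedding of a cube $(-1,1)\times(-\delta,\delta)\times U$ (with $U\subset\R^{2n-1}$ a ball about $0$, coordinates $(x_1,y_1)$ on the first two factors and $(x_2,y_2,\dots,x_n,y_n,z)$ on $U$, and contact form $\alpha=dz-\sum y_j\,dx_j$, exactly as in Proposition \ref{boprop}) onto a neighborhood of an arbitrary point of $Y$; it suffices to produce $\psi,N,p$ with $\psi$ supported in this chart, since then $\psi$ extends by the identity to a contact homeomorphism of $Y$ (the approximating contactomorphisms $\psi_m$ from Proposition \ref{boprop} being compactly supported in the cube). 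The boundedness of $\psi$ above and below near every point of $Y$ is then automatic: off the compact support of the $\psi_m$ the maps are the identity, while on the support the bound $\max|\partial H_{k\ell_k}/\partial z|<1/k^2$ from the proof of Proposition \ref{boprop} forces each conformal factor of $\phi_{H_{k\ell_k}}^{1}$ into $[e^{-1/k^2},e^{1/k^2}]$, hence the conformal factors $f_m$ of the $\psi_m$ into the $m$-independent interval $[e^{-\pi^2/6},e^{\pi^2/6}]$, which is what Definition \ref{bddef} asks.

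Next I would apply Proposition \ref{boprop} with $F(x_2,\dots,y_n,z)=\chi(x_2,\dots,y_n,z)\,z^{1/3}$, where $\chi$ is a cutoff equal to $1$ near $0$ and supported in a small enough region of $U$ that $\max_U|F|<1$. Where $\chi\equiv1$, the homeomorphism $\psi$ carries the codimension-two contact submanifold $Z=\{x_1=y_1=0\}$ to $\{(z^{1/3},0,x_2,\dots,y_n,z)\}$, which under the substitution $z=t^3$ is the \emph{smooth} embedded submanifold $\{(t,0,x_2,\dots,y_n,t^3)\}$ --- precisely the mechanism exploited in \cite{BO}, that the graph of a merely continuous function can be a smooth submanifold, here developing a tangency to $\xi$ at the origin. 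I would take $p$ to be the origin, so $\psi(p)$ is the origin as well, and for the given $k\in\{2,\dots,n+1\}$ pick $J\subset\{2,\dots,n\}$ with $|J|=k-2$ and set $N=\big(Z\cap\{y_j=0:j\in J\}\big)\cap\mathcal{N}$, where $\mathcal{N}$ is a relatively compact neighborhood of $0$ contained in the region $\{F=z^{1/3}\}$; this is a codimension-$k$ submanifold of $Y$ through $p$, and $\psi(N)$, being an open piece of the smooth submanifold above, is smooth.

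To verify the two coisotropy statements I would argue directly with $\alpha$. Along $N$ one computes $\alpha|_N=dz-\sum_{j\in\{2,\dots,n\}\setminus J}y_j\,dx_j$, so $(\alpha|_N)_p\neq0$ while $(\alpha|_N)_p\wedge\big(d(\alpha|_N)\big)_p^{\wedge(n-k+1)}$ is a nonzero multiple of $\big(dz\wedge\bigwedge_{j\notin J}dx_j\wedge dy_j\big)_p$, hence nonzero; Proposition \ref{dlambda} (or, when $k=n+1$, Proposition \ref{legco}, since then $\alpha|_N=dz\neq0$ shows $N$ is not Legendrian) then gives that $N$ is not coisotropic at $p$. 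On the other hand, parametrizing $\psi(N)$ near the origin by $t$ together with the surviving $x_j,y_j$ coordinates, one finds
\[
T_{\psi(p)}\psi(N)=\operatorname{span}\!\Big(\partial_{x_1},\ \partial_{x_j}\ (2\le j\le n),\ \partial_{y_j}\ (j\in\{2,\dots,n\}\setminus J)\Big),
\]
which lies in $\xi_{\psi(p)}=\ker dz$ and, regarded as a subspace of $(\xi_{\psi(p)},d\alpha_{\psi(p)})\cong(\R^{2n},\sum dx_j\wedge dy_j)$, is the $d\alpha$-orthogonal direct sum of the symplectic planes $\operatorname{span}(\partial_{x_j},\partial_{y_j})$ for $j\notin J$ and the isotropic lines $\operatorname{span}(\partial_{x_j})$ for $j\in J\cup\{1\}$; any such subspace is coisotropic, so by Definition \ref{coisodef}, $\psi(N)$ is coisotropic at $\psi(p)$. (When $k=n+1$ this $N$ moreover has $T_pN\nsubset\xi_p$, which is the configuration of item (i) of Theorem \ref{exprop}.)

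The principal obstacle is not an isolated estimate but the bookkeeping needed to make these ingredients cohere: confirming that the graph of the cut-off continuous function $F$ really is a smooth submanifold near the tangency point and that its tangent space there is the one displayed above, verifying the linear-algebra claim that the displayed subspace is coisotropic for every admissible $k$, and noting that one and the same $\psi$ works for all $k\in\{2,\dots,n+1\}$ since only the submanifold $N\subset Z$ changes with $k$. A secondary point requiring some care is extracting from the proof of Proposition \ref{boprop} the genuinely pointwise, $m$-independent conformal-factor bounds needed to apply Definition \ref{bddef} near every point of $Y$.
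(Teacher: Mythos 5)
Your argument is correct and essentially the paper's own proof: both apply Proposition \ref{boprop} with a cube-root-type function (the paper phrases it as a function $f$ of $z$ whose inverse is smooth with $(f^{-1})'(0)=0$, which is exactly your $z^{1/3}$ model), take $N$ to be a codimension-$k$ coordinate subspace of $\{x_1=y_1=0\}$, and conclude by the tangent-space computation at the origin together with the conformal-factor bound stated at the end of Proposition \ref{boprop}. The only nit is an indexing slip in your splitting of $T_{\psi(p)}\psi(N)$: the symplectic planes should be indexed by $j\in\{2,\dots,n\}\setminus J$ rather than all $j\notin J$ (since $\partial_{y_1}$ is not in this tangent space), which does not affect the conclusion that the subspace is coisotropic.
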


\begin{proof}
Choose a Darboux chart $\phi\co V\to \R^{2n+1}$ sending some point $p$ of $Y$ to the origin such that $\phi(V)$ contains $(-1,1)\times (-1,1)\times U$ for some open ball $U\subset \R^{2n-1}$, and let $N$ be a submanifold whose intersection with $V$ is identified by $\phi$ with \[ \{(x_1,y_1,x_2,\ldots,y_{n},z)\in (-1,1)\times(-1,1)\times \R^{2n-1}|x_1=y_1=0,\,y_{n-k+3}=\cdots=y_n=0\}.\] (If $k=2$ this should just be interpreted as $\{(x_1,y_1,x_2,\ldots,y_{n},z)\in (-1,1)\times(-1,1)\times \R^{2n-1}|x_1=y_1=0\}$.)  Then the $d\alpha$-orthogonal complement of $T_pN\cap \xi_p$ inside $\xi_p$ contains the tangent vector $\partial_{y_1}$, which is not contained in the tangent   
space to $N$, so $N$ is not coisotropic at $p$.  

Similarly to the proof of \cite[Corollary 4.4]{BO}, apply Proposition \ref{boprop} with $\delta=1$ and with a compactly supported function $F\co U\to (-1,1)$ 
whose graph is smooth and which restricts to a neighborhood of the origin in $\{x_2=y_2=\cdots=x_n=y_n=0\}$ as a function $f$ of the single variable $z$ with $f(0)=0$, such that $f$ is invertible on a neighborhood of $0$ on which $f^{-1}$ is smooth with $(f^{-1})'(0)=0$.  (So $f$ itself has derivative tending to $\pm\infty$ at $0$.)  The resulting contact homeomorphism $\psi$ will have $\psi(p)=p$ and will send $N$ to a smooth submanifold whose intersection with $V$ is contained in the hypersurface $\{y_1=0\}$ and coincides there with the graph of $F$.  The tangent space $T_{p}\psi(N)$ will be spanned by $\partial_{x_1},\ldots,\partial_{x_n}$ together with some subset (depending on $k$) of the $\partial_{y_j}$ with $j\geq 2$; in particular this tangent space will be a coisotropic subspace of $\xi_{\psi(p)}$, and so $\psi(N)$ is coisotropic at $p=\psi(p)$.  That $\psi$ is bounded both above and below follows directly from the last sentence of Proposition \ref{boprop}.
\end{proof}

\subsection{Collapsing toward a Legendrian torus}
\label{accel}

In this section we describe a family of examples of contact homeomorphisms $\psi$ of a neighborhood of a Legendrian torus which are not bounded below near points on the torus, and which can be arranged to send a nowhere-Legendrian submanifold $N$ to a smooth submanifold one that is tangent, possibly (depending one one's choice of parameters) even to infinite order, to the Legendrian torus; moreover unlike in Section \ref{bosect} the restriction $\psi|_N$ can be arranged to be a smooth map.  The section concludes with the proof of Theorem \ref{exprop}.
 
Work throughout this section in a smooth manifold of the form $\mathcal{B}_{V}=(\R^n/\Z^n)\times V$ where $V$ is a neighborhood of the origin in $\R^{n+1}$ (which will be specified more precisely in particular examples), with coordinates $\vec{x}=(x_1,\ldots,x_n)$ on $\R^n/\Z^n$ and $(\vec{y},z)=(y_1,\ldots,y_n,z)$ on $V$.  We continue to use the contact form $\alpha=dz-\sum_jy_jdx_j$ on $\mathcal{B}_V$.  By the Legendrian neighborhood theorem any Legendrian torus $T$ in a contact manifold has a neighborhood contactomorphic to such a contact manifold $(\mathcal{B}_V,\ker\alpha)$, so the constructions of this section can be exported to other contact manifolds.   

Let \[ \mathcal{Z}=\{(x_1,\ldots,x_n,y_1,\ldots,y_n,z)\in\mathcal{B}_V|y_1=\cdots=y_n=z=0\}\] and \[ \mathcal{B}_{V}^*=\mathcal{B}_{V}\setminus \mathcal{Z}.\]  
We will consider flows of autonomous contact Hamiltonians $H\co \mathcal{B}_{V}^{*}\to \R$ that extend continuously to all of $\mathcal{B}_V$.
Our open sets $V$ and Hamiltonians $H=H_{F,\rho}$ are prescribed as follows:
\begin{itemize} \item We have $V=\rho^{-1}([0,c))$ for some $c>0$ where $\rho\co \R^{n+1}\to [0,\infty)$ is a proper smooth function of the form \[ \rho(\vec{y},z)=\rho_y(\vec{y})+z^{d_z} \] where $\rho_y$ obeys $\rho_y(t\vec{y})=t^{d_y}\rho_y(\vec{y})$ and $d_y,d_z$ are even integers with $d_y\geq d_z\geq 2$.    (More specifically, in our examples we will take either $\rho(\vec{y},z)=z^2+\sum_j y_{j}^{2}$ or $\rho(\vec{y},z)=z^2+\sum_j y_{j}^{4}$.)
\item There is a smooth function $F\co (-\log c,\infty)\to (-\infty,0]$ such that $H\co \mathcal{B}_{V}^{*}\to \R$ is given by \[ H_{F,\rho} (\vec{x},\vec{y},z)=zF(-\log \rho(\vec{y},z)).\]
\end{itemize}

We also assume that $F$ satisfies the following throughout the rest of this section:

\begin{assm}\label{fass} \hspace{2em}\begin{itemize}
\item[(i)] $F'\leq 0$ everywhere. 
\item[(ii)] There is $u_0>-\log c$ such that $F(u)=0$ if and only if $u\leq u_0$.
\item[(iii)] For one and hence every $u_1>u_0$ we have \begin{equation}\label{nonint} 
\int_{u_1}^{\infty}\frac{du}{F(u)}=-\infty. \end{equation} 
\item[(iv)]  \[ \lim_{u\to\infty} e^{(\frac{1}{d_y}-\frac{1}{d_z})u}F'(u)=0.\]
\item[(v)] \[ \lim_{u\to\infty}\frac{F'(u)}{F(u)}=0.\]
\end{itemize}
\end{assm}

More specifically, in the examples at the end of this section we will take $d_y=d_z=2$ and $F(u)=-u^{\beta}$ for sufficiently large $u$ and some number $\beta$ with  $0<\beta<1$, or $d_y=4$ and $d_z=2$ and $F(u)$ equal either to $-u$ or to $-u\log u$ for sufficiently large $u$. 

Trajectories of the Hamiltonian flow of such a function $H_{F,\rho}$ are then, in view of (\ref{hamform}), solutions to the following system: \begin{align}\label{zfflow}
x'_{j}&=\frac{z}{\rho(\vec{y},z)}\frac{\partial \rho}{\partial y_j}F'(-\log\rho(\vec{y},z)) \nonumber \\
y'_{j}&=y_{j}F(-\log \rho(\vec{y},z))-\frac{y_jz}{\rho}\frac{\partial \rho}{\partial z}F'(-\log\rho(\vec{y},z))\\
z' &= zF(-\log\rho(\vec{y},z))+\sum_j\frac{y_j z}{\rho}\frac{\partial \rho}{\partial y_j}F'(-\log\rho(\vec{y},z))\nonumber.\end{align}

In particular, such solutions always obey \begin{align*} \frac{d}{dt}\left(\rho(\vec{y}(t),z(t))\right)&=\sum_{j}\frac{\partial \rho}{\partial y_j}y'_j+\frac{\partial\rho}{\partial z}z' &= \left(\sum_jy_j\frac{\partial \rho}{\partial y_j} + z\frac{\partial \rho}{\partial z}\right)F(-\log\rho(\vec{y},z)).\end{align*}  (Note the convenient cancellation of the terms involving $F'$.)  By Euler's homogeneous function theorem  one has $\sum_jy_j\frac{\partial \rho}{\partial y_j}=d_y\rho_y$, and  obviously $z\frac{\partial\rho}{\partial z}=d_zz^{d_z}$ and so (bearing in mind that  $F\leq 0$ and $d_y\geq d_z$) \[ d_y\rho(\vec{y},z)F(-\log\rho(\vec{y},z))\leq 
\frac{d}{dt}\left(\rho(\vec{y},z)\right)\leq d_z\rho(\vec{y},z)F(-\log\rho(\vec{y},z)),\] \emph{i.e.}, \begin{equation}\label{dfineq} -d_zF(-\log\rho(\vec{y},z))\leq \frac{d}{dt}\left(-\log(\rho(\vec{y},z))\right)\leq -d_yF(-\log\rho(\vec{y},z) )\end{equation} for any flowline $t\mapsto (\vec{x}(t),\vec{y}(t),z(t))$ of the Hamiltonian flow of $H_{F,\rho}$.  

We will see presently that Assumptions \ref{fass} together with (\ref{dfineq}) imply that Hamiltonian flowlines for $H_{F,\rho} $ which begin in $\mathcal{B}_{V}^{*}$ at $t=0$ exist (within $\mathcal{B}_{V}^{*}$) for all positive $t$.    By (\ref{nonint}), the map $G\co (u_0,\infty)\to (-\infty,\infty)$ defined by \[ G(u)=\int_{u_1}^{u}\frac{dv}{F(v)} \] (for an arbitrary choice of $u_1>u_0$) is a diffeomorphism, with $G'(u)=\frac{1}{F(u)}$.  (That $G(u)\to\infty$ as $u\to u_{0}^{+}$ follows from the fact that $F$ vanishes to infinite order at $u_0$.) If $r\in \R$ and $v_0>u_0$, the unique solution to the equation $u'(t)=-rF(u(t))$ obeying an initial condition $u(0)=v_0$ is then $u(t)=G^{-1}(G(v_0)-rt)$.  In particular this solution exists and remains in the interval $(u_0,\infty)$ for all time.  Of course if we instead have $v_0\leq u_0$ the unique solution to $u'=-rF(u)$ with $u(0)=v_0$ is constant.

In the case that $d_y=d_z$, then based on (\ref{dfineq}) the above considerations allow one to compute $\rho(\vec{y}(t),z(t))$ as a function of $t$ directly from $F$.  More generally we have the following:
\begin{prop}\label{bihari}
With $F$ and $G$ as above, suppose that $I$ is an open interval around zero and $u\co I\to \R$ obeys the differential inequalities \begin{equation}\label{dzdy} -d_zF(u(t))\leq u'(t)\leq -d_yF(u(t)),\end{equation} and that $u(0)>u_0$.  Then for all $t\in I$ with $t\geq 0$, \[ G^{-1}(G(u(0))-d_zt)\leq u(t)\leq G^{-1}(G(u(0))-d_yt).\] 
\end{prop}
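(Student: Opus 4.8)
The plan is to reduce this two-sided Bihari-type comparison to the elementary one-dimensional observation that $G$ is an antiderivative of $1/F$ and a strictly decreasing diffeomorphism of $(u_0,\infty)$ onto $\R$, so that the change of variables $t\mapsto G(u(t))$ linearizes the differential inequality. Before carrying that out, I would first check that $u(t)>u_0$ for every $t\in I$ with $t\geq 0$, so that $G(u(t))$ is even defined. Since $u$ is continuous and $u(0)>u_0$, let $[0,T)$, with $T$ either the right endpoint of $I$ or smaller, be the maximal interval on which $u>u_0$; there $F(u)<0$ by Assumption \ref{fass}(ii), so the left inequality in (\ref{dzdy}) gives $u'(t)\geq -d_zF(u(t))>0$, whence $u$ is strictly increasing on $[0,T)$ and in particular $u(t)\geq u(0)>u_0$ there. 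If $T$ were an interior point of $I$, continuity would force $u(T)\geq u(0)>u_0$, contradicting the maximality of $T$; hence $u>u_0$ on all of $I\cap[0,\infty)$.

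On that set I would then differentiate: $\frac{d}{dt}G(u(t))=G'(u(t))\,u'(t)=\frac{u'(t)}{F(u(t))}$. Since $F(u(t))<0$, dividing the inequalities (\ref{dzdy}) through by $F(u(t))$ reverses them, and using $d_y\geq d_z$ this yields $-d_y\leq \frac{d}{dt}G(u(t))\leq -d_z$ for all $t\in I$ with $t\geq 0$. Integrating from $0$ to $t\geq 0$ gives
\[ G(u(0))-d_yt\ \leq\ G(u(t))\ \leq\ G(u(0))-d_zt. \]
Finally, because $G'=1/F<0$ on $(u_0,\infty)$ the diffeomorphism $G^{-1}\co\R\to(u_0,\infty)$ is itself strictly decreasing, so applying it to the last display reverses the inequalities once more and produces exactly
\[ G^{-1}\bigl(G(u(0))-d_zt\bigr)\ \leq\ u(t)\ \leq\ G^{-1}\bigl(G(u(0))-d_yt\bigr), \]
which is the assertion. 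Note that the arguments of $G^{-1}$ are real numbers and the range of $G$ is all of $\R$ (using (\ref{nonint}) and that $F$ vanishes to infinite order at $u_0$), so these expressions are well defined for every $t\geq0$.

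There is no genuinely hard step here; the argument is just the scalar comparison principle transported through the substitution $u\mapsto G(u)$. The only point that requires a little care is the bookkeeping in the first paragraph — guaranteeing that the flow of $u$ never leaves the interval $(u_0,\infty)$ on which $F<0$ and $G$ is a diffeomorphism — since without that the change of variables and all the sign manipulations would be unjustified.
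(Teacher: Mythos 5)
Your proof is correct and follows essentially the same route as the paper: substitute $G(u(t))$, observe its derivative lies in $[-d_y,-d_z]$, integrate, and invert the decreasing diffeomorphism $G$. The only (harmless) difference is your maximal-interval argument to keep $u>u_0$; the paper gets this more directly by noting $-d_zF\geq 0$ everywhere, so $u$ is monotone increasing and hence $u(t)\geq u(0)>u_0$ for $t\geq 0$.
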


\begin{proof} Since $-d_zF(v)\geq 0$ for all $v$ the hypothesis implies that $u$ is a monotone increasing function and hence in particular that $u(t)>u_0$ and hence $F(u(t))<0$ for all $t\geq 0$.
We have \[ \frac{d}{dt}G(u(t))=G'(u(t))u'(t)=\frac{u'(t)}{F(u(t))}\in [-d_y,-d_z] \mbox{ for all $t$} \]  based on (\ref{dzdy}) and the fact that $F(u(t))< 0$.  Integrating with respect to $t$ shows that, if $t\geq 0$, then \[ G(u(0))-d_yt\leq G(u(t))\leq G(u(0))-d_zt.\]  Since $G$ and hence also $G^{-1}$ is a decreasing function, the above inequalities directly imply that $G^{-1}(G(u(0))-d_zt)\leq u(t)\leq G^{-1}(G(u(0))-d_yt)$.  
\end{proof}

\begin{cor}
If $F\co (-\log c,\infty)\to (-\infty,0]$ satisfies Assumptions \ref{fass} then the contact Hamiltonian flow $\phi_{H_{F,\rho}}^{t}$ of $H_{F,\rho} \co \mathcal{B}_{V}^{*}\to \R$ is well-defined as a diffeomorphism of $\mathcal{B}_{V}^{*}$ for all $t\in \R$, and is the identity on the subset of $\mathcal{B}_{V}^{*}$ on which $\rho(\vec{y},z)> e^{-u_0}$. 
\end{cor}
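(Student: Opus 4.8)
The plan is to deduce the corollary from the differential inequality (\ref{dfineq}), Proposition \ref{bihari} together with the elementary facts about the scalar ODE $u'=-rF(u)$ recorded just before it, and the standard fact that a maximal integral curve of a smooth vector field which is confined to a compact set is defined for all time. Throughout, for an integral curve $\gamma(t)=(\vec x(t),\vec y(t),z(t))$ of $X_{H_{F,\rho}}$ on $\mathcal{B}_{V}^{*}=\mathcal{B}_V\setminus\mathcal{Z}$ I write $u(t)=-\log\rho(\vec y(t),z(t))$; since $0<\rho<c$ on $\mathcal{B}_{V}^{*}$, the quantity $u$ takes values in $(-\log c,\infty)$, which is the domain of $F$, so $H_{F,\rho}$, $X_{H_{F,\rho}}$, $F(u(t))$ and $F'(u(t))$ are all well-defined and smooth along $\mathcal{B}_{V}^{*}$.

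First I would treat the region $\{\rho\geq e^{-u_0}\}$, i.e.\ $\{u\leq u_0\}$. By Assumption \ref{fass}(ii) the function $F$ vanishes identically on $(-\log c,u_0]$, and since $F$ is smooth this forces $F'(u_0)=0$ as well (indeed $F$ vanishes to infinite order at $u_0$). Inspecting (\ref{hamform}) for $H=H_{F,\rho}(\vec x,\vec y,z)=zF(-\log\rho(\vec y,z))$, one sees that $H$ and each of its first partial derivatives carries a factor of $F(-\log\rho)$ or $F'(-\log\rho)$, so $X_{H_{F,\rho}}$ vanishes identically on $\{\rho\geq e^{-u_0}\}\cap\mathcal{B}_{V}^{*}$. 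In particular every point with $\rho>e^{-u_0}$ is a fixed point of the flow, which is precisely the second assertion of the corollary; moreover, by uniqueness of solutions of $\gamma'=X_{H_{F,\rho}}(\gamma)$, any integral curve passing through a point with $\rho\geq e^{-u_0}$ is constant.

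Next I would establish completeness. Fix $p_0\in\mathcal{B}_{V}^{*}$ and let $\gamma\co(T_-,T_+)\to\mathcal{B}_{V}^{*}$ be the maximal integral curve with $\gamma(0)=p_0$; set $u(t)$ as above. If $u(0)\leq u_0$ then $\gamma$ is constant by the previous paragraph and $(T_-,T_+)=\R$, so assume $u(0)>u_0$. Since $u$ is continuous and any $t_0$ with $u(t_0)\leq u_0$ would put $\gamma(t_0)$ in $\{\rho\geq e^{-u_0}\}$ and hence make $\gamma$ constant (contradicting $u(0)>u_0$), we get $u(t)>u_0$, hence $F(u(t))<0$, for all $t\in(T_-,T_+)$. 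By (\ref{dfineq}), $u$ satisfies (\ref{dzdy}) on $(T_-,T_+)$, so $u'\geq -d_zF(u)\geq0$: thus $u$ is non-decreasing and $u(t)\leq u(0)<\infty$ for $t\leq0$, while Proposition \ref{bihari} applied to $u|_{[0,T_+)}$ gives $u(t)\leq G^{-1}(G(u(0))-d_yt)<\infty$ for $t\in[0,T_+)$, the right side being finite because $G\co(u_0,\infty)\to\R$ is a bijection. Hence, for $t$ in any compact subinterval of $(T_-,T_+)$, $u(t)$ stays in a compact subinterval of $(-\log c,\infty)$, so $\rho(\vec y(t),z(t))=e^{-u(t)}$ stays in a compact subinterval of $(0,c)$; since $\rho$ is proper, $\gamma$ is then confined to a fixed compact subset of $\mathcal{B}_{V}^{*}$ over that interval. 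This forces $(T_-,T_+)=\R$, so $\phi_{H_{F,\rho}}^{t}$ is defined on all of $\mathcal{B}_{V}^{*}$ for every $t\in\R$, and as the flow of a smooth vector field it is a diffeomorphism of $\mathcal{B}_{V}^{*}$ with inverse $\phi_{H_{F,\rho}}^{-t}$.

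The only point needing a little care is the behaviour for $t<0$, where Proposition \ref{bihari} does not directly apply; but as indicated this reduces to the two elementary observations that $u$ is monotone along flowlines and that the outer region $\{\rho\geq e^{-u_0}\}$ consists of fixed points, so a flowline starting with $\rho(p_0)<e^{-u_0}$ can neither cross into that region (preventing escape toward $\mathcal{Z}$ only in forward time anyway, handled by Proposition \ref{bihari}) nor, being $\rho$-monotone backward in time, approach $\partial V$. No hypotheses beyond Assumption \ref{fass}(ii)--(iii), $F\leq0$, and $d_y\geq d_z$ enter the argument.
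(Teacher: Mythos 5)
Your proof is correct and follows essentially the same route as the paper: the Hamiltonian vector field vanishes on the region $\rho\geq e^{-u_0}$ (giving the identity statement and confining nonconstant flowlines to $\{0<\rho\leq e^{-u_0}\}$), the quantity $-\log\rho$ is nondecreasing along flowlines by (\ref{dfineq}), and Proposition \ref{bihari} bounds it above on finite forward time intervals, so maximal integral curves remain in compact subsets of $\mathcal{B}_{V}^{*}$ and are therefore complete. You treat the fixed-point region and the backward-time direction somewhat more explicitly than the paper, which compresses the whole argument into a short contradiction, but the substance is identical.
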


\begin{proof}
Since $H_{F,\rho}$ is smooth throughout $\mathcal{B}_{V}^{*}$, standard results in ODE theory imply that in order for the corollary to be false there would need to be an integral curve $\gamma\co (T_-,T_+)\to \mathcal{B}_{V}^{*}$ (with $T_-,T_+$ both finite and $T_-<0<T_+$) of $X_{H_{F,\rho}}$  whose image is not contained in any compact subset of $\mathcal{B}_{V}^{*}$.  Now since $H_{F,\rho}$ vanishes everywhere that $\rho(\vec{y},z)\in [e^{-u_0},c)$, an integral curve of $X_{H_{F,\rho}}$ must either be constant or be contained in the region $\{0<\rho(\vec{y},z)\leq e^{-u_0}\}$.   By (\ref{dfineq}), the function $(\vec{x},\vec{y},z)\mapsto -\log\rho(\vec{y},z)$ is monototone increasing along the integral curve $\gamma$, and by Proposition \ref{bihari} if the value of this function at time zero is $v_0>-\log c$ then it will never take a value larger than $G^{-1}(G(v_0)-d_yT_+)$ for $t\in [T_-,T_+]$.  So in this case $\rho(\vec{y},z)\geq e^{-  G^{-1}(G(v_0)-d_yT_+)}$ everywhere along $\gamma$.  Thus every integral curve of $X_{H_{F,\rho}}$  defined on a bounded time interval remains inside a compact subset of  $\mathcal{B}_{V}^{*}$, as desired.
\end{proof}

\begin{prop}\label{chomeo}  Let $F$ satisfy Assumptions \ref{fass}. Then for all $t\in\R$ the time-$t$ map $\phi_{H_{F,\rho}}^{t}\co \mathcal{B}_{V}^{*}\to \mathcal{B}_{V}^{*}$ extends by the identity along the zero section $\mathcal{Z}=\{y_1=\cdots=y_n=z=0\}$ to a contact homeomorphism of $\mathcal{B}_V$, which we denote by $\overline{\phi}_{H_{F,\rho}}^{t}$.  

Moreover, assuming that $t>0$ and that $\lim_{u\to\infty}F(u)=-\infty$, $\overline{\phi}_{H_{F,\rho}}^{t}$ is the $C^0$-limit of contactomorphisms $\psi_m\co \mathcal{B}_V\to\mathcal{B}_V$ having uniform compact support, and such that $\psi_{m}^{*}\alpha=f_m\alpha$ where the smooth functions $f_m$ uniformly converge to a continuous function $f\co \mathcal{B}_{V}\to [0,\infty)$ with $\phi_{H_{F,\rho}}^{t*}\alpha=f\alpha$ on $\mathcal{B}_{V}^{*}$ and $f|_\mathcal{Z}=0$.
\end{prop}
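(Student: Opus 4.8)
I would handle the two assertions separately, both resting on the monotonicity of $-\log\rho$ along flowlines recorded in (\ref{dfineq}) and on the confinement estimates of Proposition \ref{bihari}. For the first assertion, I would begin by reading off the components of $X_{H_{F,\rho}}$ from (\ref{zfflow}): writing $u=-\log\rho$, each component is bounded by a constant multiple of $e^{(\frac1{d_y}-\frac1{d_z})u}|F'(u)|$ or of $e^{-u/d_y}(|F(u)|+|F'(u)|)$, and Assumptions \ref{fass}(iv)--(v) make these tend to $0$ as $u\to\infty$ (note that (v) forces $|F|=e^{o(u)}$). Hence $X_{H_{F,\rho}}$ extends to a continuous vector field $\bar X$ on $\mathcal{B}_V$ vanishing on $\mathcal{Z}$. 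Since $H_{F,\rho}$ vanishes where $\rho\ge e^{-u_0}$, every flowline is either constant or confined to the compact set $\{\rho\le e^{-u_0}\}$ with $-\log\rho$ increasing and bounded above by $G^{-1}(G(-\log\rho(p))-d_yt)<\infty$ via Proposition \ref{bihari}; in particular no integral curve can reach $\mathcal{Z}$ from $\mathcal{B}_V^*$ nor leave it, so $\bar X$ has a unique flow, which agrees with $\phi_{H_{F,\rho}}^t$ on $\mathcal{B}_V^*$ and is the identity on $\mathcal{Z}$. For $t\ge0$ this flow contracts $\rho$, hence is a continuous self-bijection of $\{\rho\le e^{-u_0}\}$ equal to the identity off it, hence a homeomorphism $\overline{\phi}^t$ of $\mathcal{B}_V$; the case $t<0$ follows by inverting, using that inversion is continuous in the homeomorphism group by \cite{Ar}. (When $\lim F$ is finite the construction below still applies with the truncations $F_m$ leveling off at $\lim F$, so $\overline{\phi}^t$ is a contact homeomorphism in that case too.)

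For the ``moreover'', fix $t>0$ with $\lim_{u\to\infty}F(u)=-\infty$. I would take smooth $F_m\co(-\log c,\infty)\to(-\infty,0]$ with $F_m'\le0$, $F_m=F$ on $(-\log c,m]$, and $F_m$ equal to a constant $c_m$ on $[m+1,\infty)$, where $c_m\to-\infty$ while $|c_m|$ and $\sup_{[m,m+1]}|F_m'|$ are both bounded by a constant times $|F(m)|$ — this is possible since $|F(m)|\to\infty$ and $|F'(m)|=o(|F(m)|)$ by (v). Set $H_m=zF_m(-\log\rho)$; since $F_m$ vanishes exactly on $\{u\le u_0\}$, $H_m$ is smooth with support in the single compact set $\{\rho\le e^{-u_0}\}$, so $\psi_m:=\phi_{H_m}^t$ is a contactomorphism of $\mathcal{B}_V$ with uniform compact support, and by (\ref{expconf}) $\psi_m^*\alpha=f_m\alpha$ with $f_m(p)=\exp\!\big(\int_0^t\partial_zH_m(\phi_{H_m}^s(p))\,ds\big)$.

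For the convergence: on $\{\rho\ge e^{-m}\}$ one has $H_m=H_{F,\rho}$, hence $X_{H_m}=X_{H_{F,\rho}}$, while on the complement both fields are bounded by the component estimates above evaluated at $u\ge m$, which (using $|F_m|\le C|F(m)|=e^{o(m)}$ and (iv) at $u=m$) tend to $0$ with $m$; thus $X_{H_m}\to\bar X$ uniformly on $\mathcal{B}_V$, and since the fields have uniform compact support and $\bar X$ has a unique flow, an Arzel\`a--Ascoli argument (equicontinuity of flowlines plus uniqueness of the limit curve) gives $\psi_m\to\overline{\phi}^t$ in $C^0$. Set $f(p)=\exp\!\big(\int_0^t\partial_zH_{F,\rho}(\phi_{H_{F,\rho}}^s(p))\,ds\big)$ on $\mathcal{B}_V^*$ and $f|_{\mathcal{Z}}=0$. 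By Proposition \ref{bihari} there is $\delta_m\to0$ (explicitly $\delta_m=e^{-G^{-1}(G(m)+d_yt)}$) such that the entire forward $H_{F,\rho}$-flowline from any $p$ with $\rho(p)\ge\delta_m$ stays in $\{\rho\ge e^{-m}\}$, whence $\phi_{H_m}^s(p)=\phi_{H_{F,\rho}}^s(p)$ for all $s\in[0,t]$ and $f_m(p)=f(p)$. If $\rho(p)<\delta_m$ then $-\log\rho$ stays $\ge-\log\delta_m$ along the $H_m$-flowline, and since (v) lets one absorb the $F'$-term of $\partial_zH_m$ into its negative $F_m$-term while $F_m\le F(-\log\delta_m)$ on the relevant range of $u$, one gets $\partial_zH_m(\phi_{H_m}^s(p))\le\tfrac12 F(-\log\delta_m)$ for all $s$, so $f_m(p)\le e^{tF(-\log\delta_m)/2}=:\eta_m$; the identical estimate for $H_{F,\rho}$ gives $f(p)\le\eta_m$ there, and (run with any small $\delta$ in place of $\delta_m$) $f(p)\to0$ as $\rho(p)\to0$, so $f$ is continuous. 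Since $\eta_m\to0$ — here $\lim F=-\infty$ enters — it follows that $\sup_{\mathcal{B}_V}|f_m-f|\le\eta_m\to0$; and $f_m|_{\mathcal{Z}}=e^{c_mt}\to0=f|_{\mathcal{Z}}$ is consistent.

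The step I expect to be the main obstacle is the uniform convergence of the conformal factors near $\mathcal{Z}$: unlike the vector fields, $\partial_zH_{F,\rho}$ is \emph{unbounded} as one approaches $\mathcal{Z}$, even though its integral along each individual flowline is finite, so the truncation error must be absorbed by direct estimation. Assumptions \ref{fass}(iii)--(v) together with $\lim F=-\infty$ are precisely calibrated so that these flowline integrals converge, diverge to $-\infty$ uniformly as one approaches $\mathcal{Z}$, and dominate the contribution of the $F'$-term; carrying out that bookkeeping, rather than overcoming any conceptual difficulty, is where the real work lies.
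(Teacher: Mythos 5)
Your proposal is correct in substance and follows the same overall strategy as the paper's proof: truncate $F$ so that the Hamiltonian becomes smooth across $\mathcal{Z}$, confine flowlines via (\ref{dfineq}) and Proposition \ref{bihari}, and control the conformal factors through $\partial H/\partial z$ using Assumption \ref{fass}(v) together with $\lim_{u\to\infty}F(u)=-\infty$. The differences are in execution, and two of them call for tightening. First, for the homeomorphism claim you extend the vector field continuously by zero over $\mathcal{Z}$ and appeal to uniqueness of its integral curves; the paper instead proves continuity of the extended map directly (Proposition \ref{bihari} gives the $\delta$--$\epsilon$ statement for $\rho$ in both time directions, and the $x_j$-displacement is controlled by the same component bound you use). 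Your route works, but the passage from ``merely continuous field with unique integral curves'' to ``the flow exists and is continuous, hence a homeomorphism'' is precisely the continuous-dependence theorem for non-Lipschitz fields and should be cited or argued; similarly, your Arzel\`a--Ascoli step only gives pointwise convergence $\psi_m\to\overline{\phi}^{\,t}$ without an extra compactness/contradiction argument, whereas the paper gets uniformity for free by covering $\mathcal{B}_V$ by $\{\rho\ge e^{-m}\}$, where $\psi_m$ and $\overline{\phi}^{\,t}$ coincide exactly, and $\{\rho\le\rho_\epsilon\}$, where both are $\epsilon$-close to the identity. Second, your truncation ($F_m=F$ on $(-\log c,m]$, constant beyond $m+1$) differs from the paper's reparametrized one $F_m=F\circ\beta_m$, and you correctly compensate with the threshold $\delta_m=e^{-G^{-1}(G(m)+d_zt)}$-type set on which $f_m=f$; however, your stipulation that $\sup_{[m,m+1]}|F_m'|$ be bounded by ``a constant times $|F(m)|$'' is not quite enough for the claimed absorption $\partial_zH_m\le\tfrac12 F(-\log\delta_m)$ on the interpolation band: there the $F'$-term of $\partial_zH_m$ is bounded by $d_z\sup|F_m'|$, so the slope must be taken at most about $|F(m)|/(2d_z)$ (still compatible with $c_m\to-\infty$ and with matching $F'(m)=o(|F(m)|)$ at $u=m$). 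The paper's composition trick avoids this issue automatically, since $0\le\beta_m'\le1$ gives $|F_m'/F_m|\le\sup_{v\ge m}|F'(v)/F(v)|$ on the relevant region. With these two adjustments your argument is complete and matches the paper's conclusion, including the identification of $f$ and its vanishing on $\mathcal{Z}$.
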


\begin{proof} Since the inverse of a contact homeomorphism is a contact homeomorphism it suffices to prove the result for $t>0$.  Since $\phi_{tH}^{1}=\phi_{H}^{t}$, by replacing $F$ by $tF$ (which does not affect whether $F$ obeys the hypotheses for $t>0$) we may as well assume that $t=1$.  

Let us first show that the map $\overline{\phi}_{H_{F,\rho}}^{1}$ given by extending $\phi_{H_{F,\rho}}^{1}$ by the identity over the zero section is a homeomorphism. Proposition \ref{bihari} shows that if $\ep>0$ there is $\delta>0$ such that $\overline{\phi}_{H_{F,\rho}}^{1}$ and its inverse each map the region $\{\rho(\vec{y},z)<\delta\}$ inside the region $\{\rho(\vec{y},z)<\epsilon\}$, so in order to establish the continuity of $\overline{\phi}_{H_{F,\rho}}^{1}$  and of its inverse  along the zero section it remains only to establish that the $x_j$ coordinates of these maps are continuous along the zero section.  For this purpose it suffices to check that the $\partial_{x_j}$-components of the Hamiltonian vector field $X_{H_{F,\rho}}$ can be bounded in terms of a function of $\rho(\vec{y},z)$ that approaches zero as $\rho(\vec{y},z)\to 0$.  Write $g_j$ for the $\partial_{x_j}$-component of $X_{H_{F,\rho} }$; thus \[ g_j(\vec{x},\vec{y},z)=\frac{z}{\rho(\vec{y},z)}\frac{\partial\rho}{\partial y_j}(\vec{y},z)F'(-\log(\rho(\vec{y},z))).\]   Our hypotheses on $\rho$ imply that we have $\rho(t^{d_z}\vec{y},t^{d_y}z)=t^{d_yd_z}\rho(\vec{y},z)$, and that $\frac{\partial \rho}{\partial y_j}$ is independent of $z$ and obeys $\frac{\partial \rho}{\partial y_j}(t\vec{y},z)=t^{d_{y}-1}\frac{\partial \rho}{\partial y_j}(\vec{y},z)$.  Given $(\vec{y},z)\in \R^{n+1}\setminus\{(\vec{0},0)\}$, we can find $t>0$ and $(\vec{y}_0,z_0)$ such that $\rho(\vec{y}_0,z_0)=1$ and $(t^{d_z}\vec{y}_0,t^{d_y}z_0)=(\vec{y},z)$; we then have $\rho(\vec{y},z)=t^{d_yd_z}$ and \begin{align*} |g_j(\vec{x},\vec{y},z)|&=\left|\frac{t^{d_y}z_0}{t^{d_yd_z}}(t^{d_z})^{d_{y}-1}\frac{\partial \rho}{\partial y_j}(\vec{y}_0,z_0)F'(-\log\rho(\vec{y},z))\right|\leq M_jt^{d_y-d_z}|F'(-\log\rho(\vec{y},z)) | \\&= M_j \rho(\vec{y},z)^{\frac{1}{d_z}-\frac{1}{d_y}}|F'(-\log\rho(\vec{y},z))|
\end{align*}  where $M_j$ is the maximal value of $z\frac{\partial \rho}{\partial y_j}$ on $\{\rho(\vec{y},z)=1\}$.  So Assumption \ref{fass}(iv) implies that $|g_j|$ is bounded above by a function of $\rho(\vec{y},z)$ that approaches zero as $\rho\to 0$, which as noted earlier suffices to establish that $\overline{\phi}_{H_{F,\rho}}^{1}$ is a homeomorphism.

We will now exhibit a sequence of contactomorphisms that uniformly converges to $\overline{\phi}_{H_{F,\rho}}^{1}$. As before let $G(u)=\int_{u_1}^{u}\frac{dv}{F(v)}$ and, given a sufficiently large $m\in \N$, choose a smooth function $\beta_m\co \R\to \R$ such that: \begin{itemize} \item $\beta_m(u)=u$ for all $u\leq G^{-1}(G(m)-d_y)$; \item $0\leq \beta'_m(u)\leq 1$ for all $u$; and \item $\beta'_m(u)=0$ for all $u\geq 1+ G^{-1}(G(m)-d_y)$.\end{itemize}  Set $F_m=F\circ \beta_m$. Then $F_m$ also satisfies Assumptions \ref{fass}, and it has the additional property that there are constants $u_m,c_m$ such that $F_m(u)=c_m$ for all $u>u_m$.  The latter property immediately implies that $H_{F_m,\rho}(\vec{x},\vec{y},z)=zF_m(-\log\rho(\vec{y},z))$ extends smoothly across the zero section, and hence so too does its Hamiltonian flow $\phi_{H_{F_m,\rho}}^{t}$ (specifically this flow is given on a neighborhood of the zero section by $\phi_{H_{F_m,\rho}}^{t}(\vec{x},\vec{y},z)=(\vec{x},e^{c_mt}\vec{y},e^{c_mt}z)$). 

The contactomorphisms $\psi_m$ promised in the statement of the proposition are given by $\psi_m=\phi_{H_{F_m,\rho}}^{1}$.  Clearly these are all supported in the compact set on which $\rho(\vec{y},z)\leq e^{-u_0}$.  Let us show that $\phi_{H_{F_m,\rho}}^{1}\to \overline{\phi}_{H_{F,\rho}}^{1}$ uniformly.  By (\ref{dfineq}) and Proposition \ref{bihari}, if $-\log\rho(\vec{y},z)\leq m$ then for all $t\in [0,1]$, writing $(\vec{x}(t),\vec{y}(t),z(t))=\phi_{H_{F,\rho}}^{t}(\vec{x},\vec{y},z)$,  we will have $-\log\rho(\vec{y}(t),z(t))\leq G^{-1}(G(m)-d_y)$.  Since $H_{F,\rho}$ coincides with $H_{F_m,\rho}$ everywhere that $-\log \rho(\vec{y},z)\leq G^{-1}(G(m)-d_y)$ it follows that the restriction of $\phi_{H_{F_m,\rho}}^{1}$ to $\{\rho(\vec{y},z)\geq e^{-m}\}$ coincides with that of $\overline{\phi}_{H_{F,\rho}}^{1}$.  Of course $\overline{\phi}_{H_{F,\rho}}^{1}$ and $\phi_{H_{F_m,\rho}}^{1}$ also coincide on the zero section.  Moreover the same analysis that was used in the proof that $\overline{\phi}_{H_{F,\rho}}^{1}$ is a homeomorphism shows that the Hamiltonian vector fields of $H_{F,\rho}$ and $H_{F_m,\rho}$ have norms that are uniformly bounded by a function of $\rho(\vec{y},z)$ that converges to zero as $\rho\to 0$.  From this and the fact that $\rho(\vec{y},z)$ decreases along the Hamiltonian flows of $H_{F_m,\rho}$ and of $H_{F,\rho}$ it readily follows that, for any $\ep>0$, there is $\rho_{\ep}$ such that the restrictions of $\phi_{H_{F,\rho}}^{1}$ and $\phi_{H_{F_m,\rho}}^{1}$ to $\{0< \rho(\vec{y},z)\leq \rho_{\ep}\}$ all have  $C^0$-distance at most $\frac{\ep}{2}$ from the identity, and hence at most $\ep$ from each other.    So once $m$ is so large that $e^{-m}<\rho_{\ep}$ we see that $\psi_m=\phi_{H_{F_m,\rho}}^{1}$ is within $C^0$-distance $\ep$ of $\overline{\phi}_{H_{F,\rho}}^{1}$ throughout $\mathcal{B}_V$. 

It remains only to prove the statement at the end of the proposition about the conformal factors $f_m$ of the $\psi_m$.  These conformal factors are related by (\ref{expconf}) to the functions $\frac{\partial H_{F_m,\rho}}{\partial z}$.  By construction, the maps $\frac{\partial H_{F,\rho}}{\partial z}\circ \phi_{H_{F,\rho}}^{t}$ and $\frac{\partial H_{F_m,\rho}}{\partial z}\circ \phi_{H_{F_m,\rho}}^{t}$ coincide on the set $\{\rho(\vec{y},z)\geq e^{-m}\}$ for all $t\in [0,1]$, so we have \begin{equation}\label{fmf} f_m|_{\{\rho(\vec{y},z)\geq e^{-m}\}}=f|_{\{\rho(\vec{y},z)\geq e^{-m}\}} \end{equation} where as is the statement of the proposition $f\co\mathcal{B}_V\to \R$ restricts to $\mathcal{B}_{V}^{*}$ as the conformal factor of $\phi_{H_{F,\rho}}^{1}$ and to $\mathcal{Z}$ as zero. 

Now \[ \frac{\partial H_{F_m,\rho}}{\partial z}=F(\beta_m(-\log\rho(\vec{y},z)))-\frac{d_zz^{d_z}}{\rho(\vec{y},z)}\beta'_m(-\log\rho(\vec{y},z))F'(\beta_m(-\log\rho(\vec{y},z))).\] So if $m$ is large enough Assumptions \ref{fass}(v) and (i) imply that, if $\rho(\vec{y},z)\leq e^{-m}$, then \[ \frac{\partial H_{F_m,\rho}}{\partial z}(\vec{y},z)\leq \frac{1}{2}F(\beta_m(-\log\rho(\vec{y},z)))\leq \frac{1}{2}F(m).\]  So since the set $\{\rho(\vec{y},z)\leq e^{-m}\}$ is preserved by $\phi_{H_{F_m,\rho}}^{t}$ for $t\geq 0$ it follows from (\ref{expconf}) that \[ f_m|_{\{\rho(\vec{y},z)\leq e^{-m}\}}\leq e^{\frac{1}{2}F(m)}.\]  The same reasoning applied to $F$ in place of $F_m$ shows that $f|_{\{\rho(\vec{y},z)\leq e^{-m}\}}\leq e^{\frac{1}{2}F(m)}$.  Of course $f_m$ and $f$ are both nonnegative, so in view of (\ref{fmf}) we see that \[ \sup_{\mathcal{B}_V}|f_m-f|\leq e^{\frac{1}{2}F(m)},\] which converges to zero based on our assumption that $\lim_{u\to\infty}F(u)=-\infty$.  So indeed $f_m\to f$ uniformly, and hence $f$ is continuous.\end{proof}

\begin{cor}\label{nonbound} Assuming that $t>0$ and $\lim_{u\to\infty}F(u)=-\infty$, the contact homeomorphism $\overline{\phi}_{H_{F,\rho}}^{t}$ is bounded above near every point of $\mathcal{B}_V$, but for every $p\in \mathcal{Z}$ it is not bounded below near $p$.
\end{cor}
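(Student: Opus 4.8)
The plan is to handle the two assertions of the corollary separately, deriving ``bounded above'' directly from Proposition \ref{chomeo} and ``not bounded below'' from Proposition \ref{bdcrit}.

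\textbf{Bounded above near every point.} I would first note this is essentially immediate. Since $\mathcal{B}_V$ carries the global contact form $\alpha$, the coorientability requirement in Definition \ref{bddef} is automatic, and for any $p\in\mathcal{B}_V$ one may take $\mathcal{O}$ to be any precompact neighborhood of $p$ and $\alpha=\alpha'=\alpha$. Proposition \ref{chomeo} (valid because $t>0$ and $\lim_{u\to\infty}F(u)=-\infty$) provides contactomorphisms $\psi_m$ of $\mathcal{B}_V$, of uniform compact support, $C^0$-converging to $\overline{\phi}_{H_{F,\rho}}^{t}$, with conformal factors $f_m$ (so $\psi_m^*\alpha=f_m\alpha$) converging \emph{uniformly on all of $\mathcal{B}_V$} to the continuous function $f$. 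In particular $\sup_m\sup_{\mathcal{B}_V}|f_m|<\infty$, which verifies the hypothesis of Definition \ref{bddef}(B) on $\mathcal{O}$. Hence $\overline{\phi}_{H_{F,\rho}}^{t}$ is bounded above near $p$.

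\textbf{Not bounded below near any $p\in\mathcal{Z}$.} Here I would argue by contradiction using Proposition \ref{bdcrit}. Write $\psi=\overline{\phi}_{H_{F,\rho}}^{t}$ and $\mu=\alpha\wedge(d\alpha)^{\wedge n}$, and suppose $\psi$ were bounded below near $p\in\mathcal{Z}$; then Proposition \ref{bdcrit} would produce a precompact neighborhood $U$ of $p$ and a constant $\delta>0$ with $\mu(\psi(V))\geq\delta\,\mu(V)$ for every nonempty open $V\subset U$ (taking both contact forms to be $\alpha$). I would contradict this as follows. On $\mathcal{B}_{V}^{*}$, the map $\psi$ is the contactomorphism $\phi_{H_{F,\rho}}^{t}$ whose conformal factor is the restriction of the function $f$ from Proposition \ref{chomeo}, and the computation recorded in the Remark after Theorem \ref{main} gives $\mu(\psi(E))=\int_E f^{n+1}\,\mu$ for Borel $E\subset\mathcal{B}_{V}^{*}$. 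For large $k$ I would pick a point $q_k\in U$ with $0<\rho(q_k)<e^{-k}$ (possible since $\rho$ is continuous, vanishes at $p\in\mathcal{Z}$, and is positive on $\mathcal{B}_{V}^{*}$ by properness) and then a small open ball $V_k\subset U\cap\{\rho<e^{-k}\}\setminus\mathcal{Z}$ around $q_k$. Inequality \eqref{dfineq} shows $-\log\rho$ is nondecreasing along forward-time flowlines of $X_{H_{F,\rho}}$ (because $F\leq 0$), so $\phi_{H_{F,\rho}}^{t}$ maps $\{0<\rho<e^{-k}\}$ into itself for $t>0$, whence $\psi(V_k)\subset\{0<\rho<e^{-k}\}$ as well. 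The bound $f\leq e^{\frac12 F(k)}$ on $\{\rho\leq e^{-k}\}$ established inside the proof of Proposition \ref{chomeo} then gives $\mu(\psi(V_k))=\int_{V_k}f^{n+1}\mu\leq e^{\frac{n+1}{2}F(k)}\mu(V_k)$, so $\mu(\psi(V_k))/\mu(V_k)\leq e^{\frac{n+1}{2}F(k)}$, which tends to $0$ as $k\to\infty$ since $\lim_{u\to\infty}F(u)=-\infty$. Choosing $k$ large enough that $V_k\subset U$ and $e^{\frac{n+1}{2}F(k)}<\delta$ contradicts the lower bound from Proposition \ref{bdcrit}, so $\psi$ is not bounded below near $p$.

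\textbf{Main obstacle.} I do not expect a serious obstacle: the work is all done by the two strong inputs already in hand, namely the uniform control of the conformal factors together with the decay estimate $f\le e^{\frac12 F(m)}$ near $\mathcal{Z}$ from Proposition \ref{chomeo}, and the volume-ratio obstruction of Proposition \ref{bdcrit}. The only step meriting a moment's attention is the observation that $\phi_{H_{F,\rho}}^{t}$ preserves the sublevel sets $\{\rho<e^{-k}\}$ in forward time; this is exactly what guarantees that both $V_k$ and its image $\psi(V_k)$ sit in the region where the conformal factor is small, and it follows immediately from \eqref{dfineq}.
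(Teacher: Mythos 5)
Your proof is correct and follows essentially the same route as the paper: boundedness above comes from the uniform convergence of the conformal factors $f_m$ to the continuous $f$ of Proposition \ref{chomeo}, and failure of boundedness below comes from the volume-ratio criterion of Proposition \ref{bdcrit} combined with the change-of-variables formula and the smallness of $f$ near $\mathcal{Z}$. The paper's only (cosmetic) difference is that it takes the test set to be a small neighborhood $W$ of $p$ itself, using continuity of $f$ with $f(p)=0$ and integrating over $W\cap\mathcal{B}_{V}^{*}$, instead of your balls $V_k\subset\{\rho<e^{-k}\}\setminus\mathcal{Z}$ and the quantitative bound $f\le e^{\frac{1}{2}F(k)}$ drawn from inside the proof of Proposition \ref{chomeo}.
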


\begin{proof}
The functions $f_m$ in Proposition \ref{chomeo} are positive and uniformly bounded above (since they converge uniformly to the function $f$, which is bounded above since it is continuous on $\mathcal{B}_V$ and equal to $1$ outside a compact subset of $\mathcal{B}_V$); this suffices to prove that $\overline{\phi}_{H_{F,\rho}}^{t}$ is  bounded above near every point. 

If $p\in\mathcal{Z}$ we can see that $\overline{\phi}_{H_{F,\rho}}^{t}$ is not bounded below near $p$ by using Propositions \ref{bdcrit} and \ref{chomeo}. Indeed the former implies that if $\overline{\phi}_{H_{F,\rho}}^{t}$ were bounded below near $p$ there would be $\delta>0$ so that for every sufficiently small neighborhood $W$ of $p$ we would have \[ \int_{\overline{\phi}_{H_{F,\rho}}^{t}(W)}\alpha\wedge(d\alpha)^{\wedge n}\geq \delta\int_{W}\alpha\wedge (d\alpha)^{\wedge n}.\]  But given any $\delta>0$, if we choose $W$ so small that the function $f$ in Proposition \ref{chomeo} has $\sup_W|f|^{n+1}<\delta$ we see, using that $\overline{\phi}_{H_{F,\rho}}^{t}$ is smooth on the full-measure subset $\mathcal{B}_{V}^{*}\subset \mathcal{B}_V$ (allowing us to apply the change of variables theorem), \[ 
\int_{\overline{\phi}_{H_{F,\rho}}^{t}(W)}\alpha\wedge(d\alpha)^{\wedge n}=\int_{\overline{\phi}_{H_{F,\rho}}^{t}(W\cap\mathcal{B}_{V}^{*})}\alpha\wedge(d\alpha)^{\wedge n}=\int_{W\cap\mathcal{B}_{V}^{*}}f^{n+1}\alpha\wedge (d\alpha)^{\wedge n}<\delta\int_W\alpha\wedge (d\alpha)^{\wedge n},\] a contradiction.
\end{proof}

\begin{prop} \label{wall} The contact homeomorphism $\overline{\phi}_{H_{F,\rho}}^{t}\co\mathcal{B}_V\to\mathcal{B}_V$ from Proposition \ref{chomeo} has restriction to the locus $\mathcal{W}:=\{(\vec{x},\vec{0},z)| 0<|z|<e^{-u_0/d_z}\}\subset \mathcal{B}_{V}^{*}$  given by \[ \overline{\phi}_{H_{F,\rho}}^{1}(\vec{x},\vec{0},z)=\left(\vec{x},\vec{0},e^{-\frac{1}{d_z}G^{-1}(G(-d_z\log |z|)-d_zt)}\mathrm{sgn}(z)\right)\] where $G\co (u_0,\infty)\to(-\infty,\infty)$ is an antiderivative of $\frac{1}{F}$.  Moreover if $t>0$ and $\lim_{u\to\infty}F(u)=-\infty$, the contactomorphisms $\psi_m$ from the proof of Proposition \ref{chomeo} have the property that $\psi_m|_{\mathcal{W}}\to \overline{\phi}_{H_{F,\rho}}^{t}|_{\mathcal{W}}$ in the $C^1$ topology.
\end{prop}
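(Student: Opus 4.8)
The plan is to restrict the flow equations (\ref{zfflow}) to the locus $\{\vec{y}=\vec{0}\}$, where they collapse to a single scalar ODE, and then to solve that ODE using the function $G$ from the discussion preceding Proposition \ref{bihari}. First I would note that, since $\rho_y$ is smooth and homogeneous of degree $d_y\geq 2$, each $\frac{\partial\rho}{\partial y_j}=\frac{\partial\rho_y}{\partial y_j}$ is homogeneous of degree $d_y-1\geq 1$ and hence vanishes at the origin of $\R^n$. Plugging this into (\ref{zfflow}), along $\{\vec{y}=\vec{0}\}$ (where also $\rho(\vec{0},z)=z^{d_z}>0$ for $z\neq 0$) every term of $x'_j$ and of $y'_j$ carries a factor of some $y_k$ or of $\frac{\partial\rho}{\partial y_j}(\vec{0},z)$, so $x'_j=y'_j=0$ there, while $z'=zF(-d_z\log|z|)$. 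Hence the Hamiltonian flow of $H_{F,\rho}$ preserves $\{\vec{y}=\vec{0}\}\cap\mathcal{B}_{V}^{*}$, fixes the $\vec{x}$-coordinates on it, and reduces there to the scalar equation $z'=zF(-d_z\log|z|)$.

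Second, I would substitute $u=-d_z\log|z|$. Since $\frac{d}{dt}\log|z|=z'/z$ for $z\neq 0$, the function $u(t)$ solves $u'=-d_zF(u)$ with $u(0)=-d_z\log|z|$; on $\mathcal{W}$ one has $|z|<e^{-u_0/d_z}$, i.e. $u(0)>u_0$, so the solution is $u(t)=G^{-1}\bigl(G(-d_z\log|z|)-d_zt\bigr)$ for all $t\in\R$, using that $G$ is a decreasing bijection of $(u_0,\infty)$ onto $\R$. Since $z(t)=z\exp\bigl(\int_0^tF(u(s))\,ds\bigr)$ never vanishes, $\mathrm{sgn}(z(t))=\mathrm{sgn}(z)$ and $|z(t)|=e^{-u(t)/d_z}$, and assembling these yields the asserted formula for the restriction of $\overline{\phi}_{H_{F,\rho}}^{t}$ to $\mathcal{W}$; monotonicity of $u$ (as $-d_zF\geq 0$) keeps the trajectory in $\{u>u_0\}$, confirming that $\mathcal{W}$ is invariant.

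For the $C^1$-convergence claim, recall from the proof of Proposition \ref{chomeo} that $\psi_m=\phi_{H_{F_m,\rho}}^{1}$ with $F_m=F\circ\beta_m$ and $\beta_m(u)=u$ for $u\leq G^{-1}(G(m)-d_y)$ (the general case of $t>0$ reduces to $t=1$ upon replacing $F$ with $tF$, exactly as there). The same computation applies to $H_{F_m,\rho}$, so $\psi_m$ preserves $\{\vec{y}=\vec{0}\}$, fixes $\vec{x}$, and the logarithm $u_m=-d_z\log|z_m|$ of its $z$-component solves $u_m'=-d_zF_m(u_m)$. Now fix a compact $L\subset\mathcal{W}$ and choose $U_0<\infty$ with $-d_z\log|z|\leq U_0$ on $L$; for $t\in[0,1]$ the increasing function $u$ then satisfies $u(t)\leq u(1)=G^{-1}(G(u(0))-d_z)\leq G^{-1}(G(U_0)-d_z)=:U_1$, and since $G(m)\to-\infty$ we may pick $m$ so large that $G^{-1}(G(m)-d_y)>U_1$. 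For such $m$ we have $\beta_m(u(t))=u(t)$ for all $t\in[0,1]$, so $u$ solves $u'=-d_zF_m(u)$ on $[0,1]$; as $u_m$ solves the same smooth ODE with the same initial value $u(0)$, uniqueness forces $u_m\equiv u$ on $[0,1]$, whence $\psi_m$ and $\overline{\phi}_{H_{F,\rho}}^{1}$ agree on $L$ for all large $m$. This gives $C^{\infty}$ --- in particular $C^1$ --- convergence on every compact subset of $\mathcal{W}$, which is the claimed $C^1$ convergence.

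The whole argument is the reduction of (\ref{zfflow}) to the scalar equation $u'=-d_zF(u)$ on an invariant locus, so I do not anticipate a serious obstacle; the one point requiring care is to track the directions in which $G$ and $t\mapsto u(t)$ are monotone, both to see that the closed-form solution has the stated shape and to verify that the truncation $\beta_m$ becomes invisible on $[0,1]$ once $m$ is large relative to the compact set $L$.
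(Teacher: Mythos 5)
Your derivation of the formula for $\overline{\phi}_{H_{F,\rho}}^{t}|_{\mathcal{W}}$ is correct and is essentially the paper's argument: the terms of (\ref{zfflow}) involving $y_j$ or $\frac{\partial\rho}{\partial y_j}$ vanish on $\{\vec{y}=\vec{0}\}$, the flow reduces to $z'=zF(-d_z\log|z|)$, and the substitution $u=-d_z\log|z|$ turns this into $u'=-d_zF(u)$, solved by $u(t)=G^{-1}(G(u(0))-d_zt)$.

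The second half, however, has a genuine gap. You prove that for every \emph{compact} $L\subset\mathcal{W}$ one has $\psi_m|_L=\overline{\phi}_{H_{F,\rho}}^{t}|_L$ for all large $m$ (which is true, and is already implicit in the proof of Proposition \ref{chomeo}, where it is shown that $\psi_m$ and $\overline{\phi}_{H_{F,\rho}}^{t}$ coincide on $\{\rho(\vec{y},z)\geq e^{-m}\}$), and you conclude convergence ``in the $C^1$ topology'' in the compact-open sense. But $\mathcal{W}$ is not compact --- its closure contains the zero section $\mathcal{Z}$ --- and the content of the $C^1$ claim is \emph{uniform} convergence of the maps and their derivatives on all of $\mathcal{W}$, i.e. control near $z=0$, which is exactly where $\psi_m$ and $\overline{\phi}_{H_{F,\rho}}^{t}$ never literally agree (there $\psi_m$ acts essentially as the linear scaling $z\mapsto e^{c_mt}z$, whereas the limit map has derivative tending to $0$). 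This uniform statement is what the paper proves and what is invoked in the introduction for $C^1$-convergence of $\psi_m|_{\Lambda}$, where $\Lambda$ meets the zero section; your compact-exhaustion argument says nothing about it. The missing ingredient is the last clause of Proposition \ref{chomeo}: since $\alpha$ restricts to $\mathcal{W}$ as $dz$ and $\psi_m$ preserves $\mathcal{W}$, writing $\psi_m(\vec{x},\vec{0},z)=(\vec{x},\vec{0},g_m(z))$ one has $g'_m(z)=f_m(\vec{x},\vec{0},z)$ and likewise $g'(z)=f(\vec{x},\vec{0},z)$, so $\sup_{\mathcal{W}}|g'_m-g'|\leq\sup_{\mathcal{B}_V}|f_m-f|\to 0$; combined with the uniform convergence $\psi_m\to\overline{\phi}_{H_{F,\rho}}^{t}$ this gives the asserted uniform $C^1$ convergence. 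Your proof needs this (or an equivalent quantitative estimate on the conformal factors near $\mathcal{Z}$) to close the argument.
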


\begin{proof} 
Examining (\ref{zfflow}) and recalling that $\rho(\vec{0},z)=z^{d_z}$ where $d_z$ is an even integer and that  $\frac{\partial \rho}{\partial y_j}$ is homogeneous of degree $d_y-1\geq 1$ in $\vec{y}$ and hence vanishes where $\vec{y}=0$, we see that one obtains integral curves of $X_{H_{F,\rho}}$ by taking each $x_j$ equal to an arbitrary constant, each $y_j$ equal to zero, and each $z$ equal to a solution of \[ z'=zF(-\log z^{d_z})=zF(-d_z\log |z|) .\]  The latter equation can be rewritten as $\frac{d}{dt}\log|z(t)|=F(-d_z\log|z(t)|)$, which has general solution $-\log|z(t)|=\frac{1}{d_z}G^{-1}(G(-d_z\log|z(0)|)-d_zt)$. Since $\overline{\phi}_{H_{F,\rho}}^{t}$ is given on $\mathcal{B}_{V}^{*}$ as the time-$t$ flow of $X_{H_{F,\rho}}$, the formula in the statement of the proposition follows directly by exponentiating this formula for $\log|z(t)|$.

For the second statement, recall that the $\psi_m$ are taken in the proof of Proposition \ref{chomeo} to be of the form $\psi_m=\phi_{H_{F_m,\rho}}^{t}$ where the $F_m$ satisfy Assumptions \ref{fass} and additionally have $F_m|_{[u_m,\infty)}$ constant for suitable $u_m$ (so that $H_{F_m,\rho}$ can be seen as a smooth function on all of $\mathcal{B}_V$).  So, as an instance of the first statement of this proposition, we have $\psi_m(\vec{x},\vec{0},z)=(\vec{x},0,g_m(z))$ for a certain smooth function $g_m$. Let us write $g(z)=e^{-\frac{1}{d_z}G^{-1}(G(-d_z\log |z|)-d_zt)}$ for the third component of $\overline{\phi}_{H_{F,\rho}}^{t}|_{\mathcal{W}}$.  Since $\psi_m\to \overline{\phi}_{H_{F,\rho}}^{t}$ uniformly we  evidently have $g_m\to g$ uniformly.  Furthermore, notice that the contact form $\alpha=dz-\sum_jy_jdx_j$ restricts to $\mathcal{W}$ as $dz$.  So  $\psi_{m}^{*}(\alpha|_{\mathcal{W}})=g'_m(z)dz=g'_m(z)\alpha|_{\mathcal{W}}$ and likewise $\overline{\phi}_{H_{F,\rho}}^{t*}(\alpha|_{\mathcal{W}})=g'(z)\alpha|_{\mathcal{W}}$.  So the last clause of Proposition \ref{chomeo} implies that $g'_m\to g'$ uniformly, and hence $g_{m}\to g$ in $C^1$.  
\end{proof}

\begin{ex}\label{square}
If $\rho(\vec{y},z)=\sum_jy_{j}^{2}+z^2$ it turns out that one can give an explicit formula for $\phi_{H_{F,\rho}}^{t}$ on all of $\mathcal{B}_{V}^{*}$, not just on the locus where $\vec{y}=\vec{0}$.  Specifically, letting as before $G$ be an antiderivative of $\frac{1}{F}$, and also abbreviating \[ u_t(\vec{y},z)=G^{-1}(G(-\log\rho(\vec{y},z))-2t),\] one has $\phi_{H_{F,\rho}}^{t}(\vec{x},\vec{y},z)=(\vec{X}(t),\vec{Y}(t),Z(t))$ where:\begin{align*}  \vec{X}(t)&=\vec{x}+ \left(\arctan\left(\frac{\|\vec{y}\|}{z}\right)-\arctan\left(\frac{F(u_t(\vec{y},z))}{F(u_0(\vec{y},z))}\frac{\|\vec{y}\|}{z}\right)\right)\frac{\vec{y}}{\|\vec{y}\|} ,\\ \vec{Y}(t)&=-\frac{e^{-u_t(\vec{y},z)/2}F(u_t(\vec{y},z))\vec{y}}{\sqrt{\|F(u_t(\vec{y},z)\vec{y}\|^2+(F(u_0(\vec{y},z))z)^2} },\\ Z(t)&=-\frac{e^{-u_t(\vec{y},z)/2}F(u_0(\vec{y},z))z}{\sqrt{\|F(u_t(\vec{y},z)\vec{y}\|^2+(F(u_0(\vec{y},z))z)^2} }.\end{align*}

(To derive such a formula from scratch, one can observe that that, along the Hamiltonian flow of $H_{F,\rho}$, one has $\rho(\vec{Y}(t),Z(t))=u_t(\vec{y},z)$ by Proposition \ref{bihari} since $d_y=d_z=2$, and moreover that if $\gamma(t)=\frac{\|\vec{Y}(t)\|^2-Z(t)^2}{\|\vec{Y}(t)\|^2+Z(t)^2}$, then one has $\gamma'(t)=-2(1-\gamma(t)^2)F'(-\log u_t(\vec{y},z))$, which one can then solve easily for $\gamma$, hence determining $\|\vec{Y}(t)\|$ and $Z(t)$. Of course if one has been given the above formulas one can also simply confirm by direct substitution that they satisfy the ODEs (\ref{zfflow}).) 

Specializing this example further, we could choose $F$ so that $F(v)=-\sqrt{v}$ for all $v\geq v_1$ (for an arbitrary $v_1$ which is greater than $-\log c$ in the notation of Assumption \ref{fass}). This yields, for all $\vec{y},z$ with $\rho(\vec{y},z)\leq e^{-v_1}$, \[ u_t(\vec{y},z)=   \left(\sqrt{-\log(\|\vec{y}\|^2+z^2)}+t\right)^2.\] We find in particular that \[ \overline{\phi}_{H_{F,\rho}}^{t}(\vec{x},\vec{0},z)=\left(\vec{x},\vec{0},e^{-t\sqrt{-2\log|z|}-t^2/2}z\right)\] for all sufficiently small $z$.  For any fixed $t>0$ the third component above is a $C^1$ function of $z$ which vanishes together with its derivative at $z=0$; however its second derivative at $z=0$ does not exist.  Thus, for $t>0$, $\overline{\phi}_{H_{F,\rho}}^{t}$ maps a neighborhood of the origin in the nowhere-Legendrian submanifold $\{(\vec{x},\vec{0},x_n)\}$ of $\mathcal{B}_V$ to a neighborhood of the origin in the $C^1$-submanifold $\{(\vec{x},\vec{0},e^{-t\sqrt{-2\log|x_n|}-t^2/2}x_n)\}$, which is tangent to the contact distribution at the origin.  

One obtains similar behavior if one takes $F(v)=-v^{\beta}$ with $0<\beta<1$.  Note that the condition $\beta<1$ is forced by Assumption \ref{fass}(iv)  because in this example $d_y=d_z$.
\end{ex}

\begin{ex}\label{fourfinite}
If we instead take $\rho(\vec{y},z)=\sum_{j}y_{j}^{4}+z^2$ then Assumption \ref{fass}(iv) only requires that $\lim_{u\to\infty}e^{-u/4}F'(u)=0$, allowing more freedom in the choice of $F$ and ultimately leading to examples that improve on the $C^1$-smoothness in Example \ref{square}. (In Example \ref{square}, Assumption \ref{fass}(iv) required $\lim_{u\to\infty}F'(u)=0$.)  With this new choice of $\rho$, the author does not know an explicit formula for the maps $\phi_{H_{F,\rho}}^{t}$ on all of $\mathcal{B}_{V}^{*}$ as in Example \ref{square}, but Proposition \ref{wall} still applies to compute their restrictions to the locus $\{\vec{y}=\vec{0}\}$.  

More concretely, if $F(v)=-v$ for all sufficiently large $v$, one finds (for $|z|$ sufficiently small) \[ G^{-1}(G(-2\log|z|)-2t)=e^{2t+\log(-2\log|z|)}=-2e^{2t}\log|z|, \] so that Proposition \ref{wall} gives \[ \phi_{H}^{t}(\vec{x},\vec{0},z)=\left(\vec{x},\vec{0},\mathrm{sgn}(z)|z|^{e^{2t}}\right).\]  So for any odd integer $m>1$, the contact homeomorphism $\overline{\phi}_{H_{F,\rho}}^{\frac{1}{2}\log m}$ maps a neighborhood of the origin in the nowhere-Legendrian submanifold  $\{(\vec{x},\vec{0},x_n)\}\subset\mathcal{B}_V$ to a neighborhood of the origin in $\{(\vec{x},0,x_{n}^{m})\}$, which is of course smooth and has an order-$m$ tangency to the contact distribution at the origin.
\end{ex}

\begin{ex}\label{fourinf}
To get an infinite-order tangency, we can again take $\rho(\vec{y},z)=\sum_jy_{j}^{4}+z^2$ and now set $F(v)=-v\log v$ for all sufficiently large $v$, so that $\frac{1}{F}$ has antiderivative $G(v)=-\log(\log v)$ for all large $v$. One then computes that $G^{-1}(G(-2\log|z|)-2t)=\left(-2\log|z|\right)^{e^{2t}}$ and hence that, by Proposition \ref{wall}, \[ \phi_{H_{F,\rho}}^{t}(\vec{x},\vec{0},z)=\left(\vec{x},\vec{0},\mathrm{sgn}(z)e^{-\frac{1}{2}\left(\log \frac{1}{z^2}\right)^{e^{2t}}}\right)\] for all sufficiently small $z$.  For any fixed $t>0$ and any positive integer $m$ the third component above approaches zero as $z\to 0$ faster than $|z|^{m}=e^{-\frac{m}{2}\log\frac{1}{z^2}}$.  Consequently a neighborhood of the origin in the nowhere-Legendrian submanifold $\{\vec{y}=\vec{0},z=x_n\}$ is sent by $\overline{\phi}_{H_{F,\rho}}^{t}$ to a smooth submanifold with an infinite-order tangency to the contact disribution at the origin. 
\end{ex} 

\begin{proof}[Proof of Theorem \ref{exprop}]  Corollary \ref{bocor}, specialized to the case $k=n+1$, gives examples for variation (i) of the theorem. Either Example \ref{fourfinite} or Example \ref{fourinf} supplies instances of variation (iii), using Corollary \ref{nonbound} and  bearing in mind that an arbitrary contact manifold contains Legendrian tori (contained in Darboux charts, for instance) which have tubular neighborhoods contactomorphic to $\mathcal{B}_V$, and that our examples are limits of contactomorphisms that are uniformly compactly supported in $\mathcal{B}_V$ which can thus be exported to any contact manifold. 

 We now explain how to combine the constructions in Section \ref{bosect} and in the current section to provide examples for variation (ii). First take a standard neighborhood $\mathcal{N}\cong \mathcal{B}_V$ of a Legendrian torus in $(Y,\xi)$ and let $\psi_3\co Y\to Y$ be given by one of the contact homeomorphisms from Example \ref{fourinf} within $\mathcal{N}$ and by the identity outside $\mathcal{N}$. Next let $\psi_1$ be a contact homeomorphism of $Y$ given by the identity outside of a small neighborhood $W=(-\delta,\delta)^{2n+1}\subset R^n/\Z^n\times \R^{n+1}$ of the origin under the identification $\mathcal{B}_V\cong\mathcal{N}\subset Y$ and, inside $W$, by a contact homeomorphism as in Proposition \ref{boprop} for a function $F$ supported inside $(-\delta,\delta)^{2n-1}$, with $\max|F|<\delta$, and such that $F$ has (as in Corollary \ref{bocor}) a smooth graph with a vertical tangency at the origin. 

Examples for variation (ii) of Theorem \ref{exprop} are then provided by taking $\psi=\psi_1\circ\psi_{3}^{-1}$. Indeed $\psi_3$  maps the nowhere-Legendrian submanifold $\Lambda=\{x_1=y_1=y_2=\cdots=y_n=0\}\subset \mathcal{N}$ homeomorphically to itself, fixing the origin (see Corollary \ref{wall}), and $\psi_1$ maps $\Lambda$ to a submanifold that is tangent to the contact distribution at the origin.  Moreover the fact that $\psi_3$ is bounded above but not below near the origin while $\psi_1$ is bounded both above and below near the origin readily implies that $\psi_1\circ\psi_{3}^{-1}$ is bounded below but not above near the origin.
\end{proof}


\end{document}